\newtheorem{Theorem}{Theorem}[section]
\newtheorem{Proposition}[Theorem]{Proposition}
\newtheorem{Corollary}[Theorem]{Corollary}
\newtheorem{Lemma}[Theorem]{Lemma}
\theoremstyle{definition}
\newtheorem{Remark}[Theorem]{Remark}
\newtheorem{nt}[Theorem]{Notation}
\numberwithin{equation}{section}
\newcommand{\el}[3]{(\iota \otimes \omega_{#2, #3})(#1)}
\newcommand{\npsi}{\mathcal{N}_{\psi}}
\newcommand{\nphi}{\mathcal{N}_{\varphi}}
\newcommand{\hpsi}{H_{\psi}}
\newcommand{\hphi}{H_{\varphi}}
\newcommand{\pipsi}{\pi_{\psi}}
\newcommand{\piphi}{\pi_{\varphi}}
\newcommand{\mphi}{\mathcal{M}_{\varphi}}
\newcommand{\Dom}{\mathcal{D}}
\newcommand{\Domt}{\mathcal{D}^\otimes}
\newcommand{\NHalf}{\nabla^{\frac{1}{2}}}
\newcommand{\conj}{S}
\newcommand{\djnot}{D_{J_0}}
\newcommand{\dnablanot}{D_{\nabla^\frac{1}{2}_0}}
\newcommand{\ICM}{{\rm IC}(M)}
\newcommand{\Q}{\mathcal{Q}}
\newcommand{\HS}{\textrm{B}_2}
\newcommand{\I}{\mathcal{I}}
\newcommand{\NHalfAcc}{\nabla'^{\frac{1}{2}}}
\newcommand{\IR}{\textrm{IR}}
\newcommand{\IC}{{\rm IC}}
\newcommand{\sgn}{\textrm{sgn}}
\newcommand{\ep}{\varepsilon}
\newcommand{\ot}{\otimes}
\newcommand{\vp}{\varphi}
\def\De{\Delta}
\def\om{\omega}
\def\cM{\mathcal{M}}
\def\La{\Lambda}
\long\def\symbolfootnotemark[#1]{\begingroup%

\def\thefootnote{\fnsymbol{footnote}}\footnotemark[#1]\endgroup} 
\long\def\symbolfootnotetext[#1]#2{\begingroup%

\def\thefootnote{\fnsymbol{footnote}}\footnotetext[#1]{#2}\endgroup} 
\begin{document}
\title[Modular properties of matrix coefficients]{Modular properties of matrix coefficients of corepresentations of a locally compact quantum group}
\author{ Martijn Caspers, Erik Koelink
}

\address{Radboud Universiteit Nijmegen, IMAPP, FNWI, Heyendaalseweg 135, 6525 AJ Nijmegen,
the Netherlands}
\email{caspers@math.ru.nl, e.koelink@math.ru.nl}

\date{ \noindent April 11, 2011 \\
{\it Keywords:} Locally compact quantum groups, Orthogonality relations, Duflo-Moore operators, Modular automorphism group, Plancherel measure. \\
{\it 2000 Mathematics Subject Classification numbers:}  20G42, 47D03, 47A67.
}


\begin{abstract}
We give a formula for the modular operator and modular conjugation in terms of matrix coefficients of corepresentations of a quantum group in the sense of Kustermans and Vaes. As a consequence, the modular autmorphism group of a unimodular quantum group can be expressed in terms of matrix coefficients. As an application, we determine the Duflo-Moore operators for the quantum group analogue of the normaliser of $SU(1,1)$ in $SL(2,\mathbb{C}$).
\end{abstract}

\maketitle

\section{Introduction}\label{SectIntroduction}

The definition of locally compact quantum groups has been given by Kustermans and Vaes
\cite{KusV}, \cite{KusVII} at the turn of the millenium, and we use their definition
of locally compact quantum groups in this paper. We stick mainly to the von Neumann algebraic
setting \cite{KusVII}. Since the introduction of quantum groups in the 1980ies and their theoretical
development, many results known in the theory of groups have been generalised to
quantum groups in some setting. In particular, the theory of compact quantum groups has
been settled satisfactorily by Woronowicz establishing analogues of the Haar measure and
the Schur orthogonality relations for matrix elements of corepresentations analogous to
the group case, see \cite{Tim} and references given there.
In particular, in the Kustermans-Vaes approach to locally quantum groups there is a
well-defined notion of dual locally compact quantum group. Moreover, the double dual
gives back the original locally compact quantum group. 

In his thesis \cite[\S 3.2]{Des} Desmedt generalises the Plancherel theorem for locally compact groups to the setting of quantum groups. Imposing sufficient conditions on a quantum group reminiscent of the conditions of the classical Plancherel theorem, he proves a decomposition of the biregular corepresentation in terms of tensor products of irreducible corepresentations. The intertwining operator, also called the Plancherel transformation, is given in terms of fields of positive self-adjoint operators which correspond to classical Duflo-Moore operators. One consequence of the quantum Plancherel theorem is the existence of orthogonality relations of matrix coefficients in terms of these operators.  

\vspace{0.3cm}

The present paper focusses on the modular properties of matrix coefficients of a locally compact quantum group that satisfies the assumptions of Desmedt's Plancherel theorem. The orthogonality relations
suggest that modular properties of integrals of the matrix coefficients of corepresentations
of a locally compact quantum can be expressed in terms of the corresponding operators
of Duflo-Moore type. Here, we give the polar decomposition of the second operator \eqref{EqnConjugationI} as in the Tomita-Takesaki theorem
 for a general locally compact quantum group satisfying the conditions
of the Plancherel theorem, see Theorem \ref{ThmPlancherelLeft}. In the case of a unimodular locally compact
quantum group, we obtain an
explicit expression for the action of the modular automorphism group on
matrix elements of corepresentations. This result is presented in Theorem \ref{ThmModularExpression}.

In the second part of this paper, we determine the modular conjugation and the modular automorphism group for the
case of the locally compact quantum group associated with the normaliser of
$SU(1,1)$ in $SL(2,\mathbb{C})$. This quantum group was introduced in \cite{KoeKus} and further studied
in \cite{GrKoeKus}, where the explicit decomposition of the left regular corepresentation
is presented. We calculate the Duflo-Moore operators for almost all corepresentations
in the decomposition of the left regular corepresentation. This extends Desmedt's result in \cite[\S 3.5]{Des}, where he determines Duflo-Moore operators for the discrete series corepresentations using summation formulas for basic hypergeometric series instead of the modular formula obtained in the present paper. 

\vspace{0.3cm}

This paper is structured as follows. After introducing the notational conventions, we recall Desmedt's Plancherel theorem in Section \ref{SectPlancherel}. We indicate how his theorem implies orthogonality relations between matrix coefficients and prove a result about integrals of matrix coefficients that are square integrable, see Theorem \ref{ThmDomain}.  Next, in Section \ref{SectModular} we give a formula of the modular automorphism group of a unimodular quantum group in terms of matrix coefficients.  In Section \ref{SectExample} we apply the theory of Sections \ref{SectPlancherel} and \ref{SectModular} to determine the Duflo-Moore operators of the normaliser of $SU(1,1)$ in $SL(2, \mathbb{C})$. Appendix \ref{AppendixA} contains a technical result on direct integration and Appendix \ref{AppendixB} proves that the example of Section \ref{SectExample} satisfies the assumptions of the Plancherel theorem.

\section{Conventions and notation}

For results on weight theory on von Neumann algebras our main reference is \cite{TakII}. If $\varphi$ is a weight on a von Neumann algebra $M$, we use the  notation $\nphi = \left\{ x \in M \mid \varphi(x^\ast x) < \infty \right\}$ and $\mphi = \nphi^\ast \nphi$, $\mphi^+ = \mphi \cap M^+$. $\sigma_t^\varphi$ denotes the modular automorphism group of $\varphi$.

The definition of a locally compact quantum group we use is the one by Kustermans and Vaes \cite{KusV},  \cite{KusVII}. We briefly recall their notational conventions, see also \cite{KusLec}, \cite{Tim}.
Let $(M, \Delta)$ be a locally compact quantum group, where $M$ denotes the von Neumann algebra and $\Delta$ the comultiplication. So $\Delta$ is
normal $\ast$-ho\-mo\-mor\-phism $\Delta\colon M\to M\otimes M$ satisfying
$(\Delta\ot\iota)\Delta=(\iota\ot\Delta)\Delta$, where $\iota$ denotes the identity.
Moreover, there exist two normal semi-finite faithful weights
$\vp$, $\psi$ on $M$ so that
\[
\begin{split}
\vp\bigl((\om\ot\iota )\Delta(x)\bigr)\, &=\, \vp(x)\om(1), \qquad
\forall \ \om\in M^+_*,\, \forall\ x\in \cM^+_\vp
\qquad \text{(left invariance),}\\
\psi\bigl((\iota \ot\om)\Delta(x)\bigr)\, &=\, \psi(x)\om(1), \qquad
\forall \ \om\in M^+_*,\, \forall\ x\in \cM^+_\psi
\qquad \text{(right invariance)}.
\end{split}
\]
$\vp$ is the left Haar weight and $\psi$ the right Haar weight. $(\hphi, \Lambda, \piphi)$ and $(\hpsi, \Gamma, \pipsi)$ denote the GNS-constructions with respect to the left Haar weight $\varphi$ and the right Haar weight $\psi$ respectively. Without loss of generality we may
assume that $\hphi=\hpsi$ and $M\subset B(\hphi)$. The operator $W\in B(\hphi\ot\hphi)$ defined by
$W^\ast \bigl( \La(a)\ot \La(b) \bigr) =
\bigl( \La\ot\La\bigr) \bigl( \De(b)(a\ot 1)\bigr)$
 is a unitary operator known as the multiplicative unitary. It implements the
comultiplication
$\De(x)= W^\ast(1\ot x)W$ for all $x\in M$ and satisfies the pentagonal equation
$W_{12}W_{13}W_{23}=W_{23}W_{12}$ in $B(\hphi\ot \hphi\ot\hphi)$. 
In \cite{KusV}, \cite{KusVII}, see also \cite{KusLec}, \cite{Tim}, it is proved that there exists
a dual locally compact quantum group $(\hat{M},\hat{\Delta})$, so that
$(\hat{\hat{M}},\hat{\hat{\Delta}}) = (M,\Delta)$.

A unitary corepresentation
$U$ of a von Neumann algebraic quantum
group on a Hilbert space $H$ is a unitary element $U\in M\ot B(H)$
such that $(\De\ot\iota)(U)=U_{13}U_{23}\in M\ot M\ot B(H)$, where
the standard leg-numbering is used in the right hand side.
A closed subspace $L\subseteq H$ is an
invariant subspace for the unitary corepresentation $U$ 
if $(\om\ot \iota)(U)$ preserves $L$ for all $\om\in M_\ast$.
A unitary corepresentation $U$
in the Hilbert space $H$
is irreducible if there are only trivial (i.e. equal to
$\{0\}$ or the whole Hilbert space $H$)  invariant subspaces. If $U_1$ is a corepresentation on a Hilbert space $H_1$ and $U_2$ is a corepresentation on a Hilbert space $H_2$, then $U_1$ is equivalent to $U_2$ if there is a unitary map $\Upsilon: H_1 \rightarrow H_2$, such that $(\iota \otimes \Upsilon) U_1 = U_2 (\iota \otimes \Upsilon)$.
We use the notation $\IC(M)$ for the equivalence classes of irreducible, unitary corepresentations of $(M,\Delta)$.

$(\hat{M}_u, \hat{\Delta}_u)$ denotes the universal dual and $(\hat{M}_c, \hat{\Delta}_c)$ denotes the reduced dual C$^\ast$-algebraic quantum groups \cite{Kus}. The dual weights are denoted by $\hat{\varphi}_u$ and $\hat{\psi}_u$  for  $(\hat{M}_u, \hat{\Delta}_u)$ and $\hat{\varphi}_c$ and $\hat{\psi}_c$ for $(\hat{M}_c, \hat{\Delta}_c)$.   Similarly, we have GNS-constructions $(\hphi, \hat{\Lambda}_{\hat{\varphi}_u}, \pi_{\hat{\varphi}_u})$ and $(\hpsi, \hat{\Gamma}_{\hat{\psi}_u}, \pi_{\hat{\psi}_u})$ for $(\hat{M}_u, \hat{\Delta}_u)$ and $(\hphi, \hat{\Lambda}_{\hat{\varphi}_c}, \pi_{\hat{\varphi}_c})$ and $(\hpsi, \hat{\Gamma}_{\hat{\psi}_c}, \pi_{\hat{\psi}_c})$ for $(\hat{M}_c, \hat{\Delta}_c)$. Recall that without loss of generality we may assume that $\hphi$ equals $\hpsi$.

 By $\IR(\hat{M}_u)$ and $\IR(\hat{M}_c)$ we denote the equivalence classes of irreducible, unitary representations of $\hat{M}_u$ and $\hat{M}_c$ respectively. We recall from \cite{Kus} that there is a bijective correspondence between $\IR(\hat{M}_u)$ and $\IC(M)$ and that $\IR(\hat{M}_c)$ is contained in $\IR(\hat{M}_u)$.

$W$ denotes the multiplicative unitary associated with $M$.  For
$\omega \in M_\ast$ we define $\lambda(\omega) = (\omega\otimes
\iota)(W) \in \hat{M}$. We set
$$\mathcal{I} = \left\{\omega \in M_\ast \mid \Lambda(x) \mapsto \omega(x^\ast), x \in \nphi \textrm{ is a continuous functional on } \hphi \right\}.$$
\noindent $\mathcal{I}$ is dense in $M_\ast$ \cite[Lemma 8.5]{KusV}. By the Riesz representation theorem, for every $\omega \in \mathcal{I}$ one can associate a unique vector denoted by $\xi(\omega)$ such that $\langle \xi(\omega), \Lambda(x) \rangle = \omega(x^\ast)$. The set $\xi(\omega)$, $\omega \in \mathcal{I}$, is dense in $\hphi$ \cite[Lemma 8.5]{KusV}. Then the dual weight $\hat{\varphi}$ on $\hat{M}$ is the weight defined by the GNS-construction $\lambda(\omega) \mapsto \xi(\omega)$. This GNS-construction of $\hat{M}$ is denoted by $\hat{\Lambda}$. All these definitions have right analogues.
$$\mathcal{I}_R = \left\{\omega \in M_\ast \mid \Gamma(x) \mapsto \omega(x^\ast), x \in \npsi \textrm{ is a continuous functional on } \hpsi \right\}.$$
\noindent For $\omega \in \mathcal{I}_R$, there is a vector $\xi_R(\omega)$ such that $\langle \xi_R(\omega), \Gamma(x) \rangle = \omega(x^\ast)$. The set $\xi_R(\omega)$, $\omega \in \mathcal{I}_R$, is dense in $\hphi$. 

For $\alpha \in M_\ast$, we denote $\overline{\alpha} \in M_\ast$ for the functional defined by $\overline{\alpha}(x) = \overline{\alpha(x^\ast)}$. Define $M_\ast^\sharp = \left\{ \alpha \in M_\ast \mid \exists \theta \in M_\ast: (\theta \otimes \iota)(W) = (\alpha \otimes \iota)(W)^\ast \right\}.$ It can be shown \cite{Kus} that for every $\alpha \in M_\ast^\sharp$ there is a unique $\theta \in M_\ast$ such that $(\theta \otimes \iota)(W) = (\alpha \otimes \iota)(W)^\ast$ and $\theta$ is determined by $\theta(x) = \overline{\alpha}(\mathcal{S}(x)), x \in \Dom(\mathcal{S})$, where $\mathcal{S}$ is the unbounded antipode of $(M, \Delta)$. We will write $\alpha^\ast$ for this $\theta$.

Basic results on direct integration can be found in \cite{Dix}.
For direct integrals of unbounded operators we refer to
\cite{Lan}, \cite{Nus} and \cite[Chapter 12]{Schm}. If $X$ is a standard measure space with measure $\mu$,
we use the notation $(H_U)_{U\in X}$ or simply $(H_U)_U$ for a
field of of Hilbert spaces $H_U$ over $X$. If $(H_U)_U$ is a
measurable field of Hilbert spaces we denote its direct integral
by $\int^\oplus_{X} H_U d\mu(U)$. Similarly we add subscripts to denote fields of vectors,
operators and representations.

Let $H$ be a Hilbert space. We define the inner product to be linear in the first entry and anti-linear in the second entry. We denote the Hilbert-Schmidt operators on $H$ by $B_2(H)$. Recall that $B_2(H)$ is a Hilbert space itself, which is isomorphic to $H \otimes \overline{H}$, the isomorphism being given by $\xi \otimes \overline{\eta}: h \mapsto \langle h, \eta \rangle \xi$. Here $\overline{H}$ denotes the conjugate Hilbert space. We denote vectors in $\overline{H}$ and operators acting on $\overline{H}$ with a bar. For $\xi, \eta \in H$ the normal functional $\omega_{\xi, \eta}$ on $B(H)$ is defined as $\omega_{\xi,\eta}(A) = \langle A \xi, \eta\rangle$. The domain of an (unbounded) operator $A$ on $H$ is denoted by $\Dom(A)$. The symbol $\otimes$  denotes either the tensor product of Hilbert spaces, the tensor product of operators or the von Neumann algebraic tensor product. It will always be clear from the context which tensor product is meant.

\section{Plancherel Theorems}\label{SectPlancherel}

The classical Plancherel theorem for locally compact groups \cite[Theorem 18.8.1]{DixC} has a quantum group analogue, which has been proved by Desmedt in \cite{Des}. This section recalls part of Desmedt's Plancherel theorem and elaborates on minor modifications and implications of this theorem which turn out to be useful for explicit computations in Section \ref{SectExample}. 

For two unbounded operators $A$ and $B$, we denote $A \cdot B$ for the closure of their product.

\begin{Theorem}[{\rm Desmedt \cite[Theorem 3.4.1]{Des}}]\label{ThmPlancherelLeft}
Let $(M, \Delta)$ be a locally compact quantum group such that $\hat{M}$ is a type I von Neumann algebra and such that $\hat{M}_u$ is a separable C$^\ast$-algebra. There exist a standard
measure $\mu$ on $\ICM$, a measurable field $(H_U)_U$ of Hilbert spaces, a
measurable field $(D_U)_U$ of self-adjoint, strictly positive operators and an
isomorphism $\Q_L$ of
$\hphi$ onto $\int^\oplus \HS(H_U) d\mu(U)$ with the following properties:

\begin{enumerate}
\item For all $\alpha \in \I$ and $\mu$-almost all $U \in \ICM$, the operator $(\alpha \otimes \iota)(U)D_U^{-1}$ is bounded and $(\alpha \otimes \iota)(U) \cdot D_U^{-1}$ is a
Hilbert-Schmidt operator on $H_U$.
\item For all $\alpha, \beta \in \I$ one has the Parseval formula
$$\langle \xi(\alpha), \xi(\beta)\rangle = \int_{\ICM} \!\!{\rm Tr}\left(\left((\beta \otimes \iota)(U) \cdot D_U^{-1}\right)^\ast \left((\alpha \otimes \iota)(U) \cdot D_U^{-1}\right)\right) d\mu (U), $$
and $\Q_L$ is the isometric extension of
$$\hat{\Lambda}(\lambda(\I)) \rightarrow \int^{\oplus}_{\ICM} \HS(H_U) d\mu(U): \:\:\xi(\alpha) \mapsto \int^\oplus_{\ICM} (\alpha \otimes \iota)(U) \cdot D_U^{-1}d\mu(U).$$
\end{enumerate}
\end{Theorem}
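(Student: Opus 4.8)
The plan is to build everything from the type~I structure of $\hat{M}$ together with the disintegration of the dual weight $\hat{\varphi}$. First I would use the hypotheses---$\hat{M}$ type~I and $\hat{M}_u$ separable---to obtain a central decomposition of $\hat{M}$ as a direct integral of type~I factors. Since $\hat{M}_u$ is a separable C$^\ast$-algebra, $\hat{M}$ has separable predual and its centre gives rise to a standard measure space; disintegrating over the centre yields $\hat{M} \cong \int^\oplus B(H_U)\, d\mu(U)$ for a measurable field of Hilbert spaces $(H_U)_U$ and a standard measure $\mu$. The correspondence between $\IR(\hat{M}_u)$ and $\ICM$ recalled in Section~2 identifies the base of this decomposition with $\ICM$, and under the associated representation $\pi_U$ the element $\lambda(\omega) = (\omega \otimes \iota)(W) \in \hat{M}$ is sent to the matrix coefficient operator $(\omega \otimes \iota)(U)$ acting on $H_U$. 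This is what makes matrix coefficients enter the statement.

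Next I would disintegrate the dual weight $\hat{\varphi}$ along this decomposition. Being normal, semifinite and faithful, $\hat{\varphi}$ decomposes as $\int^\oplus \hat{\varphi}_U\, d\mu(U)$, where each fibre $\hat{\varphi}_U$ is a normal semifinite faithful weight on the type~I factor $B(H_U)$. Every such weight is of the form $x \mapsto \Tr(k_U^{1/2} x k_U^{1/2})$ for a strictly positive self-adjoint operator $k_U$ on $H_U$, and I would set $D_U = k_U^{-1/2}$, so that $(D_U)_U$ is the field of strictly positive self-adjoint operators in the statement. The GNS construction of the trace-like weight $\hat{\varphi}_U$ identifies its GNS space with the Hilbert--Schmidt operators $\HS(H_U)$, the canonical injection being $x \mapsto x \cdot D_U^{-1}$ on the square-integrable elements. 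Patching these identifications together and using that the GNS space of $\hat{\varphi}$ is $\hphi$ produces the unitary $\Q_L : \hphi \to \int^\oplus \HS(H_U)\, d\mu(U)$.

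With the bookkeeping in place, the two assertions follow by translating the defining property $\hat{\Lambda}(\lambda(\omega)) = \xi(\omega)$ through $\Q_L$. For $\alpha \in \I$ the vector $\xi(\alpha)$ maps fibrewise to $(\alpha \otimes \iota)(U) \cdot D_U^{-1}$, which lies in $\HS(H_U)$ for $\mu$-almost every $U$ precisely because $\xi(\alpha)$ is a genuine element of the direct integral; this gives statement~(1). Statement~(2) is then the isometry of $\Q_L$ made explicit: since $\langle \xi(\alpha), \xi(\beta)\rangle = \langle \hat{\Lambda}(\lambda(\alpha)), \hat{\Lambda}(\lambda(\beta))\rangle$ and the inner product on $\int^\oplus \HS(H_U)\, d\mu(U)$ is the fibrewise Hilbert--Schmidt pairing $\Tr(b^\ast a)$, expanding yields exactly the Parseval formula. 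The formula for $\Q_L$ on $\hat{\Lambda}(\lambda(\I))$ is the same computation, and since $\{\xi(\omega) : \omega \in \I\}$ is dense in $\hphi$ (as recalled in Section~2) the isometric extension to all of $\hphi$ is unique, while surjectivity is built into the fact that the disintegration is an isomorphism.

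The \emph{main obstacle} I anticipate is the measurability and the disintegration of the weight: one must verify that the densities $k_U$, equivalently the operators $D_U$, assemble into a genuine measurable field of self-adjoint operators, and that the fibrewise GNS identifications are mutually compatible so that they glue to a single unitary $\Q_L$. This is the analytic heart of the argument and relies on the theory of direct integrals of unbounded operators and of weights; the separability hypothesis on $\hat{M}_u$ is exactly what guarantees a standard measure space and keeps these disintegrations under control.
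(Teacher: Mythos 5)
The paper does not actually prove this statement: Theorem \ref{ThmPlancherelLeft} is imported verbatim from Desmedt's thesis \cite[Theorem 3.4.1]{Des}, so there is no in-paper proof to compare against line by line. What the paper does record (after Theorem \ref{ThmPlancherelRight} and in Remark \ref{RmkReducedSetting}) is that $\mu$ and $(H_U)_U$ arise from the direct integral decomposition $\pi_{\hat{\varphi}_u}(\hat{M}_u)'' = \int^\oplus B(H_\sigma)\,d\mu(\sigma)$ together with the identifications $\pi_\sigma((\omega\otimes\iota)(W)) = (\omega\otimes\iota)(U_\sigma)$ and $\xi(\omega)=\hat{\Lambda}((\omega\otimes\iota)(W))$. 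Your outline is consistent with that: central decomposition of the type~I dual, fibrewise identification of $\lambda(\omega)$ with matrix coefficients, disintegration of $\hat{\varphi}$ with fibre densities $k_U = D_U^{-2}$, and GNS spaces $\HS(H_U)$. The derivation of (1) and (2) from that scaffolding is also essentially right, since for a n.s.f.\ weight $\Tr(k_U^{1/2}\,\cdot\,k_U^{1/2})$ on $B(H_U)$ the GNS map is $x\mapsto x\cdot k_U^{1/2}$ and the GNS space is canonically $\HS(H_U)$.

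The genuine gap is the one you flag and then set aside: the measurable disintegration of the n.s.f.\ weight $\hat{\varphi}$ along the central decomposition. A n.s.f.\ weight on a direct integral von Neumann algebra does not automatically decompose into a measurable field of fibre weights, and it is precisely this step that produces the measurable field $(D_U)_U$, the a.e.\ membership $\pi_U(\lambda(\alpha))\in\mathcal{N}_{\hat{\varphi}_U}$ underlying statement (1), and the unitarity (not just isometry) of $\Q_L$. Desmedt's actual argument does not proceed by ``disintegrating the weight'' directly but via the direct integral theory of left Hilbert algebras (Lance \cite{Lan}), decomposing the achieved left Hilbert algebra of $\hat{\varphi}$ and reading off the fibre modular structure; the operators $D_U$ emerge from that decomposition and are only determined up to normalizations tied to the choice of $\mu$ (as the paper notes at the end of Section \ref{SectExample}). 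Two smaller points: the spatial isomorphism $\hat{M}\cong\int^\oplus B(H_U)\,d\mu(U)$ should really be phrased for the GNS representation $\pi_{\hat{\varphi}_u}(\hat{M}_u)''$, where multiplicity spaces appear and must be identified with $\overline{H_U}$ (automatic once the fibre weights exist, but not before); and your justification of (1) --- that the fibres are Hilbert--Schmidt ``because $\xi(\alpha)$ is an element of the direct integral'' --- is circular until one has shown that the image of $\xi(\alpha)$ under $\Q_L$ is the specific field $\bigl((\alpha\otimes\iota)(U)\cdot D_U^{-1}\bigr)_U$ and, separately, that $(\alpha\otimes\iota)(U)D_U^{-1}$ is a.e.\ bounded. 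So: right architecture, but the analytic core of the theorem is named rather than proved.
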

 

Here $\mu$ is called the left Plancherel measure and $\Q_L$ is called the left Plancherel transform. We will be dealing with a right analogue of the Plancherel theorem as well, see \cite[Remark 3.4.11]{Des}. Here we explicitly state the part of this theorem that is relevant for the present paper. 

\begin{Theorem}\label{ThmPlancherelRight}
Let $(M, \Delta)$ be a locally compact quantum group such that
$\hat{M}$ is a type-I von Neumann algebra and such that
$\hat{M}_u$ is a separable C$^\ast$-algebra. There exist a standard measure $\mu_R$ on
$\ICM$, a measurable field $(K_U)_U$ of Hilbert spaces, a
measurable field $(E_U)_U$ of self-adjoint, strictly positive
operators and an isomorphism $\Q_R$ of $\hpsi$ onto $\int^\oplus
\HS(K_U) d\nu(U)$ with the following properties:

\begin{enumerate}
\item\label{ThmPlancherelRightI} For all $\alpha \in \I_R$ and $\mu_R$-almost all $U \in \ICM$, the operator $(\alpha \otimes \iota)(U)E_U^{-1}$ is bounded and $(\alpha \otimes \iota)(U)\cdot E_U^{-1}$ is a
Hilbert-Schmidt operator on $K_U$.
\item\label{ThmPlancherelRightII} For all $\alpha, \beta \in \I_R$ one has the Parseval formula
$$\langle \xi_R(\overline{\alpha^\ast}), \xi_R(\overline{\beta^\ast})\rangle = \int_{\ICM} \!\!\!\!{\rm Tr}\left(\left((\beta \otimes \iota)(U) \cdot E_U^{-1}\right)^\ast \left((\alpha \otimes \iota)(U) \cdot E_U^{-1}\right)\right) d\mu_R(U), $$
and $\Q_R$ is the isometric extension of
$$\xi_R(\overline{\mathcal{I}_R^\ast})  \rightarrow \int^{\oplus}_{\ICM} \HS(H_U) d\mu_R(U): \:\:\xi_R(\overline{\alpha^\ast}) \mapsto \int^\oplus_{\ICM} (\alpha \otimes \iota)(U) \cdot E_U^{-1}d\mu_R(U).$$
\item The measure $\mu_R$ can be choosen equal to the measure $\mu$ of Theorem \ref{ThmPlancherelLeft} and the measurable field of Hilbert spaces $(K_U)_U$ can be choosen equal to $( H_U)_U$, the measurable field of Hilbert spaces of Theorem \ref{ThmPlancherelLeft}.
\end{enumerate}
\end{Theorem}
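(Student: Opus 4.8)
The plan is to deduce the right Plancherel theorem from the left one (Theorem~\ref{ThmPlancherelLeft}) by passing to the co-opposite quantum group. Write $\chi$ for the flip on $M\ot M$ and set $\De^{\mathrm{cop}}=\chi\De$. A direct check gives $(\om\ot\iota)\De^{\mathrm{cop}}(x)=(\iota\ot\om)\De(x)$, so left invariance for $(M,\De^{\mathrm{cop}})$ is exactly right invariance for $(M,\De)$; hence the right Haar weight $\psi$ of $(M,\De)$ is a left Haar weight of $(M,\De^{\mathrm{cop}})$, and the GNS space of this left Haar weight is $\hpsi$ with map $\Gamma$. I would then apply Theorem~\ref{ThmPlancherelLeft} to $(M,\De^{\mathrm{cop}})$ and translate every object back to $(M,\De)$.

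For corepresentations, let $R$ denote the unitary antipode of $(M,\De)$. Using $(R\ot R)\De=\De^{\mathrm{cop}}R$ and that $R$ is a normal $\ast$-anti-automorphism, one checks that $U\mapsto (R\ot\iota)(U)$ sends unitary corepresentations of $(M,\De)$ to unitary corepresentations of $(M,\De^{\mathrm{cop}})$, preserving unitarity, irreducibility and equivalence: since $\om\mapsto\om\circ R$ is a bijection of $M_\ast$, the operators $(\om\ot\iota)((R\ot\iota)(U))=((\om\circ R)\ot\iota)(U)$ have the same invariant subspaces as those coming from $U$. This yields a measurable identification of $\ICM$ with $\IC(M,\De^{\mathrm{cop}})$ over the same underlying Hilbert spaces. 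I would also note that the dual of $(M,\De^{\mathrm{cop}})$ is $\hat M$ up to taking the opposite algebra, so the hypotheses (type I dual, separable universal dual C$^\ast$-algebra) transfer automatically.

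Next I would match the distinguished sets and vectors. By their very definitions, the set $\I$ and the vectors $\xi(\cdot)$ computed for $(M,\De^{\mathrm{cop}})$ from its left Haar weight $\psi$ are precisely the set $\I_R$ and the vectors $\xi_R(\cdot)$ of $(M,\De)$. The relabelling $(\om\ot\iota)((R\ot\iota)(U))=((\om\circ R)\ot\iota)(U)$ converts the co-opposite matrix coefficients into coefficients of $U$; combining this with the polar decomposition $\mathcal S=R\,\tau_{-i/2}$ of the antipode and the definition $\alpha^\ast(x)=\overline\alpha(\mathcal S(x))$ accounts for the appearance of $\overline{\alpha^\ast}$ on the left-hand side of the Parseval formula. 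Taking $E_U$ to be the Duflo--Moore field produced by Theorem~\ref{ThmPlancherelLeft} for $(M,\De^{\mathrm{cop}})$ then yields statements (1) and (2), after checking that $\alpha\mapsto\overline{\alpha^\ast}$ has dense enough range in $\I_R$.

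The main obstacle is part (3): that $\mu_R$ may be taken equal to $\mu$ and $(K_U)_U$ equal to $(H_U)_U$. Both $\Q_L$ and the transform just constructed disintegrate the standard representation of the same von Neumann algebra $\hat M$ on $\hphi=\hpsi$ along its centre, and the measure class together with the field of multiplicity spaces is governed by the central decomposition of $\hat M$ into type I factors, which is a canonical object insensitive to the left/right choice (the centre and factor type of $\hat M$ and of its opposite coincide). I would therefore fix this central decomposition once and for all, run both Plancherel constructions relative to it, and invoke the essential uniqueness of the disintegration to identify $\mu_R$ with $\mu$ and $(K_U)_U$ with $(H_U)_U$; only the positive fields $D_U$ and $E_U$ (and the transforms) then differ, the two being related through the modular element $\delta$ linking $\vp$ and $\psi$. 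Making this canonical matching precise, rather than merely producing some measure equivalent to $\mu$, is the delicate point, and is exactly where \cite[Remark 3.4.11]{Des} is invoked.
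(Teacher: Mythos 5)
Your overall strategy --- reduce the right Plancherel theorem to the left one by viewing $\psi$ as the left Haar weight of the co-opposite quantum group $(M,\De^{\mathrm{cop}})$ --- is not the route the paper takes (the paper applies Desmedt's Theorem 3.4.5 directly with the right dual weight $\hat{\psi}_u$ on the \emph{same} universal dual $\hat{M}_u$), and it contains a genuine gap at the point where you transport corepresentations. The map $U\mapsto (R\ot\iota)(U)$ does \emph{not} send unitary corepresentations to unitary corepresentations in general: $R\ot\iota$ is neither a homomorphism nor an anti-homomorphism of $M\ot B(H)$ (only the first leg is reversed), so unitarity is not preserved by it. In fact, from $(\mathcal S\ot\iota)(U)=U^\ast$ and $\mathcal S=R\tau_{-i/2}$ one gets $\bigl((R\ot\iota)(U)\bigr)^\ast=(\tau_{-i/2}\ot\iota)(U)$, which is unitary only when the scaling group acts trivially on the relevant matrix coefficients; this is the same phenomenon as the non-unitarity of the naive conjugate corepresentation for non-Kac compact quantum groups. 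The correct bijection $\IC(M,\De)\to\IC(M,\De^{\mathrm{cop}})$ is simply $U\mapsto U^\ast$ (one checks $(\De^{\mathrm{cop}}\ot\iota)(U^\ast)=(U^\ast)_{13}(U^\ast)_{23}$ because $\chi\ot\iota$ is multiplicative), and with that substitution the identity $(\overline{\alpha^\ast}\ot\iota)(U^\ast)=(\alpha\ot\iota)(U)$ does produce the stated Parseval formula; so the bookkeeping in your second and third paragraphs can be repaired, but as written the central construction is wrong.

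The second weak point is exactly the one you flag yourself: part (3). Your argument decomposes the dual of $(M,\De^{\mathrm{cop}})$, which is the \emph{opposite} of $\hat{M}$ (realized as $\hat{M}'$ on $\hpsi$), not $\hat{M}$ itself. Although $Z(\hat{M}')=Z(\hat{M})$, the type I disintegration of the commutant interchanges representation and multiplicity spaces, so it naturally yields the field $(\overline{H_U})_U$ rather than $(H_U)_U$; asserting that the two constructions are ``governed by the same canonical central decomposition'' does not by itself give $(K_U)_U=(H_U)_U$. The paper avoids this entirely: since $\hat{\varphi}_u$ and $\hat{\psi}_u$ are both (approximately KMS) weights on the \emph{same} algebra $\hat{M}_u$, their GNS representations are equivalent and in fact both act as the identity representation of $\hat{M}$ on $\hphi=\hpsi$, so $\pi_{\hat{\varphi}_u}(\hat{M}_u)''=\pi_{\hat{\psi}_u}(\hat{M}_u)''=\hat{M}$ and the direct integral decompositions (measure, field of Hilbert spaces, and the labelling by $\IR(\hat{M}_u)\cong\ICM$) can literally be taken to coincide; only the Duflo--Moore fields $D_U$ and $E_U$ differ. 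If you want to keep the co-opposite route, you must either carry the conjugation $H_U\leftrightarrow\overline{H_U}$ through the entire construction and show it cancels, or replace the co-opposite reduction by the paper's argument for part (3).
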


Parts (\ref{ThmPlancherelRightI}) and (\ref{ThmPlancherelRightII}) of this theorem can be obtained from \cite[Theorem 3.4.5]{Des} using the relations between the right Haar weight $\psi$ and the right Haar weight $\hat{\psi}_u$ on the universal dual quantum group. The prove is similar to how Theorem \ref{ThmPlancherelLeft} is obtained from \cite[Theorem 3.4.5]{Des}.  We elaborate a bit on the third statement. Since $\hat{\varphi}_u$ and $\hat{\psi}_u$ are both approximately KMS-weights on the universal dual $\hat{M}_u$, their W*-lifts are n.s.f. weights so that \cite[Theorem VIII.3.2]{TakII} implies that the representations $\piphi$ and $\pipsi$ are equivalent. Hence
\begin{equation}\label{EqnEquivReps}
\pi_{\hat{\varphi}_u}(\hat{M}_u)'' = \pi_{\hat{\varphi}}(\hat{M}) \simeq \pi_{\hat{\psi}}(\hat{M}) = \pi_{\hat{\psi}_u}(\hat{M}_u)''.
\end{equation}
\noindent The proofs of Theorems \ref{ThmPlancherelLeft} and \ref{ThmPlancherelRight} show that the measures $\mu$ and $\nu$ together with the measurable fields of Hilbert spaces $(H_U)_U$ and $(K_U)_U$ in Theorems \ref{ThmPlancherelLeft} and \ref{ThmPlancherelRight} arise from the direct integral decompositions of $\pi_{\hat{\varphi}_u}(\hat{M}_u)''$ and $\pi_{\hat{\psi}_u}(\hat{M}_u)''$, respectively. That is:
$$\pi_{\hat{\varphi}_u}(\hat{M}_u)'' = \int^\oplus_{X} B(H_\sigma) d\mu(\sigma), \qquad \pi_{\hat{\psi}_u}(\hat{M}_u)'' = \int^\oplus_{Y} B(K_\sigma) d\mu_R(\sigma).$$
\noindent By (\ref{EqnEquivReps}) we may assume that $\mu = \mu_R$, $X=Y$  and $(H_U)_U = (K_U)_U$. Furthermore, by \cite[Eqn. (3.4.2)]{Des},  $\pi_{\hat{\varphi}}(y) = y = \pi_{\hat{\psi}}(y), \forall y \in \hat{M}$, which shows that the correspondence between $X$ and the measurable subspace $\IR(\hat{M}_u)$ is the same for $\pi_{\hat{\varphi}_u}$ and $\pi_{\hat{\psi}_u}$.  This proves the third statement of Theorem \ref{ThmPlancherelRight}.

\vspace{0.3cm}

$\Q_R$ is
called the right Plancherel transform. In the rest of this paper we will assume that $\mu = \mu_R$ and we simply call $\mu$ the Plancherel measure. Similarly, we
identify $(K_U)_U$ with $(H_U)_U$.

\begin{Remark}\label{RmkReducedSetting}
Theorems \ref{ThmPlancherelLeft} and \ref{ThmPlancherelRight} remain valid when the assumption that $\hat{M}_u$ is separable (universal norm) is replaced by the assumption that $\hat{M}_c$ is separable (reduced norm) and the measure space $\IC(M)$ is replaced by the measure space $\IR(\hat{M}_c)$. The proof is a minor modification of the proof of \cite[Theorem 3.4.1]{Des}. Here, $\IR(\hat{M}_u)$ can be replaced by $\IR(\hat{M}_c)$ and $\hat{\mathcal{V}}$ should be read as the multiplicative unitary $W$, see \cite{Kus} for the definition of $\hat{\mathcal{V}}$. The proof of this modification can be obtained by using the following relations instead of \cite[p. 118-119]{Des}:
\begin{equation}
\begin{split}
  \pi_\sigma\left((\omega \otimes \iota)(W)\right)& = (\omega \otimes \iota)(U_\sigma)  \textrm{ where } \sigma\in \IR(\hat{M}_c) \textrm{ corresponds to } U_\sigma \in \IC(M), \\
\xi(\omega) &= \hat{\Lambda}\left( (\omega \otimes \iota)(W) \right) = \hat{\Lambda}_{\hat{\varphi}_c} \left( (\omega \otimes \iota) (W) \right).
\end{split}
\end{equation}
\noindent In \cite[Theorem 3.4.8]{Des} Desmedt proves that the support of the left and right Plancherel measures equal $\IR(\hat{M}_c)$, which is in agreement with this observation.
\end{Remark}

\begin{Remark}\label{RmkSqIntCoreps}
 The corepresentations that appear as discrete mass points in the Plancherel measure correspond to the square integrable correpresentations in the sense of \cite[Definition~3.2]{BusMey} or the equivalent definition of left square integrable corepresentations as in \cite[Definition~3.2.29]{Des}. A proof of this can be found in \cite[Theorem~3.4.10]{Des}.
\end{Remark}

\begin{nt}\label{NtNotationDrieEnVier} In the rest of this section as well as in Section \ref{SectModular} we adopt the following notational conventions. $(M,\Delta)$ is a fixed locally compact quantum group satisfying the conditions of Theorems \ref{ThmPlancherelLeft} and \ref{ThmPlancherelRight}. We set $D = \int^\oplus_{\IC(M)} D_U d\mu(U)$, $E = \int^\oplus_{\IC(M)} E_U d\mu(U)$ and $H = \int^\oplus_{\IC(M)} H_U d\mu(U)$, where $\mu$ is the Plancherel measure. All (direct) integrals are taken over $\IC(M)$. In the proofs we omit this in the notation. 
\end{nt}

In the remainder of this Section, we express the Plancherel transformation in terms of matrix coefficients to arrive at Theorems \ref{ThmOrthogonalitRel} and \ref{ThmDomain}. These theorems can be considered as direct implications of the Plancherel theorems. We will need them in Section \ref{SectExample}. 

\begin{Lemma}\label{LemGNS}
We have the following:
\begin{enumerate}
\item Let $x\in M$, such that the linear map $f: \hat{\Lambda}(\lambda(\mathcal{I})) \rightarrow \mathbb{C}: \xi(\alpha) \mapsto \alpha(x^\ast)$ is bounded. Then $x \in
\Dom(\Lambda)$ and $f(\xi(\alpha)) = \langle \xi(\alpha), \Lambda(x)\rangle$.
\item Let $x\in M$, such that the linear map $f: \hat{\Gamma}(\lambda(\mathcal{I}_R)) \rightarrow \mathbb{C}: \xi_R(\alpha) \mapsto \alpha(x^\ast)$ is bounded. Then $x \in
\Dom(\Gamma)$ and $f(\xi_R(\alpha)) = \langle \xi_R(\alpha), \Gamma(x)\rangle$.
\end{enumerate}
\end{Lemma}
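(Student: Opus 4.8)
The plan is to prove part (1) in full; part (2) is then identical after replacing $(\varphi,\Lambda,\piphi,\I,\xi)$ by $(\psi,\Gamma,\pipsi,\mathcal{I}_R,\xi_R)$ throughout. Since $\hat\Lambda(\lambda(\alpha))=\xi(\alpha)$, the domain of $f$ is the subspace $\{\xi(\alpha):\alpha\in\I\}$, which is dense in $\hphi$. As $\alpha\mapsto\xi(\alpha)$ and $\alpha\mapsto\alpha(x^\ast)$ are both linear, $f$ is a bounded linear functional on a dense subspace, so it extends to $\hphi$, and the Riesz representation theorem (with the inner product linear in the first entry) yields $\eta\in\hphi$ with $\alpha(x^\ast)=\langle\xi(\alpha),\eta\rangle$ for all $\alpha\in\I$. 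Everything then reduces to showing that $x\in\Dom(\Lambda)=\nphi$ with $\Lambda(x)=\eta$: once this is known, $f(\xi(\alpha))=\alpha(x^\ast)=\langle\xi(\alpha),\Lambda(x)\rangle$ is exactly the defining property of $\xi(\alpha)$.

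The mechanism I would use is the $\sigma$-strong${}^{\ast}$--norm closedness of the GNS map $\Lambda$, fed with a well-chosen approximating net. The key computational input is the transformation rule $\xi(a\omega)=\piphi(a)\,\xi(\omega)$, valid for $a\in M$ and $\omega\in\I$ (in particular $a\omega\in\I$), where $a\omega\in M_\ast$ is defined by $(a\omega)(y)=\omega(ya)$; this is proved by testing against $\Lambda(\nphi)$ and using that $\nphi$ is a left ideal with $\Lambda(a^\ast y)=\piphi(a^\ast)\Lambda(y)$. Now choose an increasing net $(e_i)$ of positive contractions in $M$ with $e_i\uparrow 1$ $\sigma$-strongly; since $\nphi$ is a left ideal, $e_i x\in\nphi$ and $e_i^{1/2}x\in\nphi$. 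Testing against $\xi(\alpha)$, using the transformation rule and boundedness of $f$, the computation gives for every $\alpha\in\I$
\[
\langle\xi(\alpha),\Lambda(e_i x)\rangle=\alpha(x^\ast e_i)=(e_i\alpha)(x^\ast)=f\bigl(\piphi(e_i)\xi(\alpha)\bigr),
\]
whence $|\langle\xi(\alpha),\Lambda(e_i x)\rangle|\le\|f\|\,\|\xi(\alpha)\|$. By density of $\{\xi(\alpha)\}$ this yields the uniform bound $\|\Lambda(e_i x)\|\le\|f\|$, and the same estimate with $e_i^{1/2}$ in place of $e_i$ gives $\varphi(x^\ast e_i x)=\|\Lambda(e_i^{1/2}x)\|^2\le\|f\|^2$.

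Next I would upgrade this to norm convergence. The net $\varphi(x^\ast e_i x)$ is increasing and bounded, hence Cauchy; combined with $(e_j-e_i)^2\le e_j-e_i$ for $e_i\le e_j\le 1$ this shows
\[
\|\Lambda(e_j x)-\Lambda(e_i x)\|^2\le\varphi\bigl(x^\ast(e_j-e_i)x\bigr)=\varphi(x^\ast e_j x)-\varphi(x^\ast e_i x)\longrightarrow 0,
\]
so $(\Lambda(e_i x))$ is norm-Cauchy and converges. On the other hand $\piphi(e_i)\xi(\alpha)\to\xi(\alpha)$ in norm, so by the displayed identity $\langle\xi(\alpha),\Lambda(e_i x)\rangle\to f(\xi(\alpha))=\langle\xi(\alpha),\eta\rangle$; by density the norm limit must be $\eta$. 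Finally $e_i x\to x$ $\sigma$-strongly${}^{\ast}$ (because $e_i\to 1$ and $x^\ast e_i\to x^\ast$ strongly on this bounded net), so closedness of $\Lambda$ gives $x\in\nphi$ and $\Lambda(x)=\eta$, completing the proof.

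I expect the main obstacle to be precisely this passage from the abstract pairing datum $\alpha(x^\ast)=\langle\xi(\alpha),\eta\rangle$ to genuine membership $x\in\nphi$: a priori $\eta$ is an arbitrary vector of $\hphi$ and need not lie in the dense, non-closed range of $\Lambda$. The transformation rule $\xi(a\omega)=\piphi(a)\xi(\omega)$ is what converts boundedness of $f$ into the uniform estimates $\|\Lambda(e_i x)\|\le\|f\|$ and $\varphi(x^\ast e_i x)\le\|f\|^2$, and the monotonicity of $\varphi(x^\ast e_i x)$ is what turns this weak control into the norm convergence needed to invoke closedness of $\Lambda$.
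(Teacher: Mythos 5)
There is a genuine gap, and it occurs at the pivotal step. You choose a net of positive contractions $e_i\uparrow 1$ and assert that ``since $\nphi$ is a left ideal, $e_ix\in\nphi$''. The left-ideal property of $\nphi$ says $M\,\nphi\subseteq\nphi$, i.e.\ you may multiply an element \emph{of} $\nphi$ on the left by anything; in the product $e_ix$ the right-hand factor is $x$, which is precisely what you do not yet know to lie in $\nphi$, so the property does not apply. (Even if you took $e_i\in\nphi\cap\nphi^\ast$, you would only get $x^\ast e_i\in\nphi$, not $e_ix\in\nphi$.) Worse, asserting $e_ix\in\nphi$ for a net increasing to $1$ is essentially equivalent to the conclusion $x\in\nphi$: by normality of $\varphi$ one has $\varphi(x^\ast x)=\lim_i\varphi(x^\ast e_i^2x)$, so this cannot be taken for free. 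Once $\Lambda(e_ix)$ is undefined, the identity $\langle\xi(\alpha),\Lambda(e_ix)\rangle=f(\piphi(e_i)\xi(\alpha))$, the uniform bound $\Vert\Lambda(e_ix)\Vert\le\Vert f\Vert$, and the estimate $\varphi(x^\ast e_ix)\le\Vert f\Vert^2$ all collapse; what your (correct) computation with the transformation rule $\xi(a\omega)=\piphi(a)\xi(\omega)$ actually shows is only that the functional $\xi(\alpha)\mapsto\alpha\bigl((e_ix)^\ast\bigr)$ is bounded by $\Vert f\Vert$ --- which puts you back at the hypothesis of the lemma with $e_ix$ in place of $x$, i.e.\ the argument is circular. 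You correctly identify the passage from the pairing datum to membership in $\nphi$ as the main obstacle, but the proposed mechanism does not overcome it.

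The repair forces the multiplication onto the other side, and this is what the paper does: take a bounded net $(e_j)$ in the Tomita algebra $\mathcal{T}_\varphi\subseteq\nphi$ with $e_j\to1$ and $\sigma^\varphi_{i/2}(e_j)\to1$ $\sigma$-weakly; then $xe_j\in M\,\nphi\subseteq\nphi$ genuinely. The price is that the pairing now reads $\langle\xi(\alpha),\Lambda(xe_j)\rangle=\alpha(e_j^\ast x^\ast)$, and to relate this to the Riesz vector $v$ one must move $e_j^\ast$ around the weight; the paper does this for functionals of the form $\alpha=\varphi(a\,\cdot\,b)$ with $a,b\in\mathcal{T}_\varphi$ via the KMS condition, obtaining $\Lambda(xe_j)=J\piphi(\sigma_{i/2}^\varphi(e_j)^\ast)Jv$ exactly (right multiplication on the GNS space is implemented by $J\piphi(\,\cdot\,)J$). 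Then $\Lambda(xe_j)\to v$ weakly, $xe_j\to x$ $\sigma$-weakly, and the $\sigma$-weak/weak closedness of $\Lambda$ finishes --- no uniform quantitative bound or norm-Cauchy upgrade is needed. So some amount of modular/Tomita-algebra input appears unavoidable here, and your attempt to bypass it is where the proof breaks.
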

\begin{proof} We prove the first statement, the second being analogous.
The claim is true for $x \in \nphi$, since $\Dom(\Lambda) = \nphi$
and by definition $\langle \xi(\alpha), \Lambda(x) \rangle =
\alpha(x^\ast)$, for all $\alpha \in \mathcal{I}$.
Now, let $x \in M$ be arbitrary. The set $\{ \xi(\alpha) \mid \alpha \in \mathcal{I}
\}$ is dense in $\hphi$ by \cite[Lemma 8.5]{KusV} and its
subsequent remark. Hence, by the Riesz theorem, there is a $v \in
\hphi$ such that for every $\alpha \in \mathcal{I}$
$
\alpha(x^\ast) = \langle \xi(\alpha), v\rangle. 
$
 Let $(e_j)_{j \in J}$ be a bounded net in the Tomita algebra
\[
\mathcal{T}_\varphi = \left\{ x \in \nphi \cap \nphi^\ast \mid x \textrm{ is analytic w.r.t. } \sigma^\varphi \textrm{ and } \sigma_z^\varphi(x) \in \nphi \cap \nphi^\ast, \forall z \in \mathbb{C} \right\},
\]
 converging $\sigma$-weakly to 1 and such that $\sigma^\varphi_{i/2}(e_j)$ converges $\sigma$-weakly to 1 (using the residue formula for meromorphic functions, one can see that the net $(e_j)_{j \in J}$ defined in \cite[Lemma 9]{TerpII} satisfies these properties). Let $a, b \in \mathcal{T}_\varphi$ and fix the normal functional $\alpha$ by $\alpha(x) = \varphi(axb), x \in M$. Using \cite[Lemma 8.5]{KusV} we find
\[
\langle \xi(\alpha), \Lambda(x e_j) \rangle = \varphi(a e_j^\ast x^\ast b) = \langle \Lambda(b \sigma_{-i}^\varphi(a e_j^\ast) ), v \rangle = \langle \Lambda(b \sigma_{-i}^\varphi(a) ), J \piphi(\sigma_{i/2}^\varphi(e_j)^\ast) J v \rangle. 
\]
Hence, $\Lambda(x e_j) =  J \piphi(\sigma_{i/2}^\varphi(e_j)^\ast) J v $, so that $\Lambda(x e_j)$ converges weakly to $v$. Since $x e_j \rightarrow x$ $\sigma$-weakly and $\Lambda$ is $\sigma$-weak/weakly closed, this implies that $x \in \Dom(\Lambda) = \nphi$ and $v = \Lambda(x)$.
\end{proof}

Recall that $\int^\oplus B_2(H_U) d\mu(U) \simeq \int^\oplus H_U \otimes \overline{H_U} d\mu(U)$. For $\eta = \int^\oplus \eta_U d\mu(U), \xi =
\int^\oplus \xi_U d\mu(U) \in H$ the mesurable field of vectors $(\xi_U \otimes \overline{\eta_U})_U$ is not necessarily square integrable. If it is square integrable, $\int^\oplus \xi_U  \otimes \overline{\eta_U} d\mu(U) \in \int^\oplus B_2(H_U) d\mu(U)$.

We obtain the following expression for the left Plancherel transformation.

\begin{Lemma}\label{LemCompI}
Let $\eta = \int^\oplus_{\ICM}  \eta_U d\mu(U) \in H$ and $\xi =
\int^\oplus_{\ICM}  \xi_U d\mu(U) \in H$ be such that $\eta \in \Dom(D^{-1})$ and $(\xi_U  \otimes \overline{\eta_U})_U$ is square integrable.
Then $\ICM \ni U \mapsto (\iota \otimes
\omega_{\xi_U, D_U^{-1} \eta_U})(U^\ast) \in M$ is $\sigma$-weakly
integrable with respect to $\mu$ and $\int_{\IC(M)} (\iota
\otimes \omega_{\xi_U, D_U^{-1} \eta_U})(U^\ast) d\mu(U) \in
\nphi$, and
\begin{equation}\label{EqnPlancherelMatrix}
\Q_L^{-1}(\int^{\oplus}_{\ICM} \xi_U  \otimes \overline{\eta_U} d\mu(U)) = \Lambda\left( \int_{\ICM} (\iota \otimes \omega_{\xi_U, D_U^{-1} \eta_U})(U^\ast) d\mu(U)\right).
\end{equation}
\end{Lemma}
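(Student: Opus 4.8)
The plan is to realise the operator on the right-hand side of \eqref{EqnPlancherelMatrix} as a $\sigma$-weakly convergent integral $x := \int (\iota \otimes \omega_{\xi_U, D_U^{-1}\eta_U})(U^\ast)\, d\mu(U) \in M$, to compute the functional $\alpha \mapsto \alpha(x^\ast)$ for $\alpha \in \I$ by means of the Parseval formula of Theorem \ref{ThmPlancherelLeft}, and finally to invoke the first statement of Lemma \ref{LemGNS} to conclude simultaneously that $x \in \nphi$ and that $\Lambda(x)$ is the claimed vector. Thus \eqref{EqnPlancherelMatrix} is never verified directly; it is extracted from an equality of inner products tested against the dense set $\{\xi(\alpha) : \alpha \in \I\}$.

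First I would settle the $\sigma$-weak integrability. Since $U$ is unitary, $\|(\iota \otimes \omega_{\xi_U, D_U^{-1}\eta_U})(U^\ast)\| \leq \|\xi_U\|\,\|D_U^{-1}\eta_U\|$ for $\mu$-almost every $U$ (note that $\eta \in \Dom(D^{-1})$ forces $\eta_U \in \Dom(D_U^{-1})$ a.e.). Hence for any $\omega \in M_\ast$ the scalar field $U \mapsto \omega\big((\iota \otimes \omega_{\xi_U, D_U^{-1}\eta_U})(U^\ast)\big)$ is dominated by $\|\omega\|\,\|\xi_U\|\,\|D_U^{-1}\eta_U\|$, and the Cauchy--Schwarz inequality together with $\xi \in H$ and $D^{-1}\eta \in H$ yields $\int \|\xi_U\|\,\|D_U^{-1}\eta_U\|\, d\mu(U) \leq \|\xi\|\,\|D^{-1}\eta\| < \infty$. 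Consequently $\omega \mapsto \int \omega(\cdots)\, d\mu(U)$ is a bounded linear functional on $M_\ast$, so by $M = (M_\ast)^\ast$ it is represented by a unique $x \in M$; this is the asserted $\sigma$-weak integral. Measurability of the integrand is inherited from the measurable structure of the fields $(\xi_U)_U$, $(\eta_U)_U$, $(D_U)_U$ and of the corepresentation field.

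The computational heart is the evaluation of $\alpha(x^\ast)$ for $\alpha \in \I$. Set $\Phi := \int^\oplus \xi_U \otimes \overline{\eta_U}\, d\mu(U)$, which lies in $\int^\oplus B_2(H_U)\, d\mu(U)$ by the square-integrability hypothesis. Using that the $\sigma$-weak integral commutes with the adjoint and that $\big((\iota \otimes \omega_{\xi_U, D_U^{-1}\eta_U})(U^\ast)\big)^\ast = (\iota \otimes \omega_{D_U^{-1}\eta_U, \xi_U})(U)$ — here one invokes $\overline{\omega_{\xi,\eta}} = \omega_{\eta,\xi}$ — I would obtain $\alpha(x^\ast) = \int \langle T_U^\alpha \eta_U, \xi_U\rangle\, d\mu(U)$, where $T_U^\alpha := (\alpha \otimes \iota)(U) \cdot D_U^{-1}$ is the Hilbert--Schmidt operator supplied by Theorem \ref{ThmPlancherelLeft}. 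Under the identification $B_2(H_U) \simeq H_U \otimes \overline{H_U}$ one computes $\langle \xi_U \otimes \overline{\eta_U}, T_U^\alpha\rangle_{B_2} = \langle \xi_U, T_U^\alpha \eta_U\rangle$, so the integrand above is the complex conjugate of the fibrewise $B_2$-pairing. Integrating, recognising the direct-integral inner product, and using $\int^\oplus T_U^\alpha\, d\mu(U) = \Q_L \xi(\alpha)$ from Theorem \ref{ThmPlancherelLeft} together with the unitarity of $\Q_L$, I arrive at
\[
\alpha(x^\ast) = \overline{\langle \Phi,\, \Q_L \xi(\alpha)\rangle} = \langle \Q_L \xi(\alpha),\, \Phi\rangle = \langle \xi(\alpha),\, \Q_L^{-1}\Phi\rangle.
\]

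This last equality exhibits $\xi(\alpha) \mapsto \alpha(x^\ast)$ as the bounded functional $\langle \,\cdot\,, \Q_L^{-1}\Phi\rangle$, so the first statement of Lemma \ref{LemGNS} applies and gives $x \in \Dom(\Lambda) = \nphi$ together with $\alpha(x^\ast) = \langle \xi(\alpha), \Lambda(x)\rangle$. Comparing the two expressions yields $\langle \xi(\alpha), \Lambda(x) - \Q_L^{-1}\Phi\rangle = 0$ for all $\alpha \in \I$, whence the density of $\{\xi(\alpha) : \alpha \in \I\}$ forces $\Lambda(x) = \Q_L^{-1}\Phi$, which is precisely \eqref{EqnPlancherelMatrix}. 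I expect the main obstacle to be not any single estimate but the rigorous justification of the Fubini-type interchanges: identifying $\int \langle T_U^\alpha \eta_U, \xi_U\rangle\, d\mu(U)$ with the direct-integral inner product $\langle \Phi, \Q_L\xi(\alpha)\rangle$, and correctly tracking the conjugations introduced by the adjoint and by the $B_2 \simeq H \otimes \overline{H}$ convention. The square-integrability of $(\xi_U \otimes \overline{\eta_U})_U$ and the integrability bound established in the first step are exactly what legitimise these interchanges.
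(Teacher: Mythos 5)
Your proposal is correct and follows essentially the same route as the paper: establish the $\sigma$-weak integral, identify $\alpha(x^\ast)$ with $\langle \xi(\alpha), \Q_L^{-1}(\int^\oplus \xi_U \otimes \overline{\eta_U}\, d\mu(U))\rangle$ via the unitarity of $\Q_L$ and the fibrewise Hilbert--Schmidt pairing, and then invoke Lemma \ref{LemGNS} to conclude membership in $\nphi$ and the formula \eqref{EqnPlancherelMatrix}. The only difference is cosmetic: you spell out the norm estimate justifying the $\sigma$-weak integrability (which the paper relegates to an assertion and to Remark \ref{RmkIntExists}) and you run the chain of equalities in the opposite direction.
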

\begin{proof} For $\alpha \in \mathcal{I}$, Theorem
\ref{ThmPlancherelLeft} implies that
\[
\begin{split}
& \langle \xi(\alpha), \Q_L^{-1}(\int^{\oplus} \xi_U  \otimes \overline{\eta_U} d\mu(U)) \rangle  =  \int  \langle(\alpha \otimes \iota)(U)D_U^{-1}, \xi_U  \otimes \overline{\eta_U} \rangle_{{\rm HS}} d\mu(U) \\
 = &  \int (\alpha \otimes \omega_{D_U^{-1}\eta_U, \xi_U})(U) d\mu(U) =  \alpha (\int (\iota \otimes \omega_{\xi_U,
D_U^{-1}\eta_U})(U^\ast) d\mu(U)^\ast) ,
\end{split}
\]
where the last integral exists in the $\sigma$-weak sense. We see by Lemma \ref{LemGNS} that, $\int (\iota \otimes \omega_{\xi_U,
D_U^{-1} \eta_U})(U^\ast) d\mu(U) \in \Dom(\Lambda) = \nphi $,
and (\ref{EqnPlancherelMatrix}) follows.
\end{proof}

\begin{Remark}\label{RmkIntExists} As in the proof of Lemma \ref{LemCompI} we see that for $\xi = \int^\oplus \xi_U d\mu(U) \in H$, $\eta = \int^\oplus \eta_U d\mu(U) \in H$, the $\sigma$-weak integral $\int (\iota \otimes \omega_{\xi_U, \eta_U})(U^\ast) d\mu(U)\in M$ exists, and for $\alpha \in M_\ast$, $\vert \int (\alpha \otimes \omega_{\xi_U, \eta_U})(U) d\mu(U) \vert \leq \Vert \alpha \Vert \Vert \xi \Vert \Vert \eta \Vert$.
\end{Remark}

The previous lemma shows that $\Q_L^{-1}$ is an analogue of what Desmedt calls the (left) Wigner map \cite[Section 3.3.1]{Des}. This map is defined as
\begin{equation}
B_2(H_U) \rightarrow H:  \xi  \otimes \overline{\eta}  \mapsto \Lambda\left( (\iota \otimes \omega_{\xi, D_U^{-1} \eta})(U^\ast) \right),
\end{equation}
\noindent where $U$ is a corepresentation on a Hilbert space $H_U$ that appears as a discrete mass point in the Plancherel measure, cf. the remarks about square integrable corepresentations at the end of Section \ref{SectPlancherel}. This map is also considered in \cite[Page 203]{BusMey}, where it is denoted by $\Phi$.

The next Lemma is the right analogue of Lemma \ref{LemCompI}, the proof being similar.

\begin{Lemma}\label{LemCompII}
Let $\eta = \int^\oplus_{\ICM}  \eta_U d\mu(U) \in H$ and $\xi =
\int^\oplus_{\ICM}  \xi_U d\mu(U) \in H$ be such that $\eta \in \Dom(E^{-1})$ and $(  \xi_U  \otimes \overline{\eta_U})_U  $ is square integrable.
Then $\ICM \ni U \mapsto (\iota \otimes
\omega_{\xi_U, E_U^{-1} \eta_U})(U) \in M$ is $\sigma$-weakly integrable
with respect to $\mu$. Furthermore, $\int_{\ICM} (\iota \otimes
\omega_{\xi_U, E_U^{-1} \eta_U})(U) d\mu(U) \in \npsi$ and
$$\Q_R^{-1}(\int^{\oplus}_{\ICM}  \xi_U  \otimes \overline{\eta_U}  d\mu(U)) = \Gamma\left( \int_{\ICM} (\iota \otimes \omega_{\xi_U, E_U^{-1} \eta_U})(U) d\mu(U)\right).$$
\end{Lemma}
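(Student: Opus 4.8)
The plan is to transcribe the proof of Lemma \ref{LemCompI} with the left-hand data $(\Q_L, D_U, \Lambda, \nphi, \xi(\alpha), \I)$ replaced by the right-hand data $(\Q_R, E_U, \Gamma, \npsi, \xi_R(\overline{\alpha^\ast}), \I_R)$ furnished by Theorem \ref{ThmPlancherelRight}. The only genuinely new feature is the conjugate functional $\overline{\alpha^\ast}$ that occurs in the right Plancherel transform: it is exactly this conjugation that converts the operator $U^\ast$ appearing in Lemma \ref{LemCompI} into the operator $U$ appearing here, and keeping track of it is the whole point.

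First I would fix $\alpha \in \I_R$ and put $\zeta = \int^\oplus_{\ICM} \xi_U \otimes \overline{\eta_U}\, d\mu(U)$, which lies in $\int^\oplus \HS(H_U)\, d\mu(U)$ by the square-integrability hypothesis. Since $\Q_R$ is isometric and is the extension of $\xi_R(\overline{\alpha^\ast}) \mapsto \int^\oplus (\alpha \otimes \iota)(U)\cdot E_U^{-1}\, d\mu(U)$ (Theorem \ref{ThmPlancherelRight}(\ref{ThmPlancherelRightII})), evaluating the Hilbert--Schmidt pairing fibrewise through $\langle A, \xi_U \otimes \overline{\eta_U}\rangle_{\HS} = \langle A\eta_U, \xi_U\rangle$ gives
\[
\langle \xi_R(\overline{\alpha^\ast}), \Q_R^{-1}(\zeta)\rangle = \int_{\ICM} (\alpha \otimes \omega_{E_U^{-1}\eta_U, \xi_U})(U)\, d\mu(U).
\]
That the field $U \mapsto \el{U}{\xi_U}{E_U^{-1}\eta_U}$ is $\sigma$-weakly integrable and that $S := \int_{\ICM} \el{U}{\xi_U}{E_U^{-1}\eta_U}\, d\mu(U)$ defines an element of $M$ follows from the estimate in Remark \ref{RmkIntExists}, applied with $U$ in place of $U^\ast$ and with the square-integrable field $E^{-1}\eta = \int^\oplus E_U^{-1}\eta_U\, d\mu(U) \in H$, which is available because $\eta \in \Dom(E^{-1})$.

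The crux is to identify the right-hand side above with $\overline{\alpha^\ast}(S^\ast)$. I would first record the fibrewise identity $(\alpha^\ast \otimes \iota)(U) = \bigl((\alpha \otimes \iota)(U)\bigr)^\ast$, which comes from $\lambda(\alpha^\ast) = \lambda(\alpha)^\ast$ together with the correspondence $\pi_\sigma(\lambda(\alpha)) = (\alpha \otimes \iota)(U_\sigma)$ recalled in Remark \ref{RmkReducedSetting}, the map $\alpha \mapsto (\alpha \otimes \iota)(U)$ thus being $\ast$-preserving. Using $\overline{\gamma}(y) = \overline{\gamma(y^\ast)}$, moving $\alpha^\ast$ through the $\sigma$-weak integral by normality, and applying this identity in each fibre, one computes
\[
\overline{\alpha^\ast}(S^\ast) = \overline{\alpha^\ast(S)} = \overline{\int_{\ICM} (\alpha^\ast \otimes \omega_{\xi_U, E_U^{-1}\eta_U})(U)\, d\mu(U)} = \int_{\ICM} (\alpha \otimes \omega_{E_U^{-1}\eta_U, \xi_U})(U)\, d\mu(U),
\]
so that $\overline{\alpha^\ast}(S^\ast) = \langle \xi_R(\overline{\alpha^\ast}), \Q_R^{-1}(\zeta)\rangle$ for every $\alpha \in \I_R$.

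Finally, this last equality exhibits $\xi_R(\overline{\alpha^\ast}) \mapsto \overline{\alpha^\ast}(S^\ast)$ as the bounded functional $\langle\,\cdot\,, \Q_R^{-1}(\zeta)\rangle$ on the set $\xi_R(\overline{\I_R^\ast})$, which is dense in $\hpsi$ because $\Q_R$ is an isometric isomorphism defined by extension from it. Applying the second statement of Lemma \ref{LemGNS} with $x = S$ then yields $S \in \Dom(\Gamma) = \npsi$ and $\Gamma(S) = \Q_R^{-1}(\zeta)$, which is the assertion. I expect the main obstacle to be precisely the bookkeeping in the previous paragraph: one must be sure that the conjugate functionals $\overline{\alpha^\ast}$ ($\alpha \in \I_R$) used in Theorem \ref{ThmPlancherelRight} range over enough of $\I_R$ for the hypothesis of Lemma \ref{LemGNS}(2) — phrased there with a free functional $\beta \in \I_R$ and the value $\beta(S^\ast)$ — to be met, and that the conjugation/adjoint manipulations producing $U$ rather than $U^\ast$ are carried out with the correct placement of the $E_U^{-1}$ and of the bars.
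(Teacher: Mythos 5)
Your proposal is correct and follows exactly the route the paper intends: the paper gives no separate proof of this lemma, stating only that it is ``the right analogue of Lemma \ref{LemCompI}, the proof being similar,'' and your argument is precisely that transcription, with the pairing $\langle (\alpha\otimes\iota)(U)\cdot E_U^{-1}, \xi_U\otimes\overline{\eta_U}\rangle_{\rm HS}$ identified with $\overline{\alpha^\ast}\bigl(S^\ast\bigr)$ and Lemma \ref{LemGNS}(2) invoked to conclude $S\in\npsi$ and $\Gamma(S)=\Q_R^{-1}(\zeta)$. The bookkeeping with $\overline{\alpha^\ast}$ converting $U^\ast$ into $U$ is carried out correctly, and the caveat you flag about $\overline{\I_R^\ast}$ versus $\I_R$ in the hypothesis of Lemma \ref{LemGNS}(2) is a genuine point that the paper itself glosses over rather than a defect of your argument.
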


 
The Plancherel theorems imply the following orthogonality relations.  The theorem follows immediately from the expressions for the Plancherel transformations given in Lemmas \ref{LemCompI} and \ref{LemCompII}. The orthogonality relations will be used in Section \ref{SectExample} where we give a method to determine the Duflo-Moore operators of a locally compact quantum group that satisfies the assumptions of the Plancherel theorem.

\begin{Theorem}[Orthogonality relations]\label{ThmOrthogonalitRel} Let $(M, \Delta)$ be a locally compact quantum group, such that $\hat{M}_u$ is separable and $\hat{M}$ is a type I von Neumann algebra. Let $\eta = \int^\oplus \eta_U d\mu(U) \in H$, $\xi = \int^\oplus
\xi_U d\mu(U) \in H$, $\eta' = \int^\oplus \eta_U' d\mu(U) \in H$
and $\xi' = \int^\oplus \xi_U' d\mu(U) \in H$. We have the
following orthogonality relations:
\begin{enumerate}
\item Suppose that $\eta, \eta' \in \Dom(D)$ and that $ (\xi_U  \otimes \overline{D_U \eta_U})_U ,
 (\xi_U '  \otimes \overline{D_U \eta_U '} )_U$ are square integrable fields of
vectors, then
\begin{equation}\label{EqnOrthI}
\begin{split}
&\varphi\left( \left(\int (\iota \otimes \omega_{\xi_U, \eta_U})(U^\ast) d\mu(U)\right) ^\ast \int (\iota \otimes \omega_{\xi_U', \eta_U'})(U^\ast) d\mu(U) \right) = \\
& \int \langle D_U \eta_U, D_U \eta'_U \rangle \langle \xi'_U, \xi_U
\rangle d\mu(U).
\end{split}
\end{equation}
\item Suppose that $\eta, \eta' \in \Dom(E)$ and that $ (\xi_U  \otimes \overline{E_U \eta_U})_U ,
 (\xi_U '  \otimes \overline{E_U \eta_U '})_U $ are square integrable fields of
vectors, then:
\begin{equation}\label{EqnOrthII}
\begin{split}
&\psi\left(\left( \int  (\iota \otimes \omega_{\xi_U, \eta_U})(U) d\mu(U)\right) ^\ast\int   (\iota \otimes \omega_{\xi_U', \eta_U'})(U) d\mu(U) \right) = \\
& \int  \langle E_U \eta_U, E_U \eta'_U \rangle \langle \xi'_U, \xi_U
\rangle d\mu(U).
\end{split}
\end{equation}
\end{enumerate}

\noindent Here $\int (\iota \otimes \omega_{\xi_U, \eta_U})(U) d\mu(U), \int (\iota \otimes \omega_{\xi_U, \eta_U})(U^\ast) d\mu(U), \int (\iota \otimes \omega_{\xi_U', \eta_U'})(U) d\mu(U)$,\\
\noindent  $\int (\iota \otimes \omega_{\xi_U', \eta_U'})(U^\ast) d\mu(U)$ are defined in Lemma \ref{LemCompI} and \ref{LemCompII}. The integrals are taken over $\ICM$.
\end{Theorem}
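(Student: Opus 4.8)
The plan is to reduce \eqref{EqnOrthI} and \eqref{EqnOrthII} to the explicit descriptions of the Plancherel transforms obtained in Lemmas \ref{LemCompI} and \ref{LemCompII}, combined with the fact that $\Q_L$ and $\Q_R$ are isometric isomorphisms. I focus on \eqref{EqnOrthI}; the relation \eqref{EqnOrthII} is entirely parallel. Writing
$$x = \int (\iota \otimes \omega_{\xi_U, \eta_U})(U^\ast) d\mu(U), \qquad y = \int (\iota \otimes \omega_{\xi_U', \eta_U'})(U^\ast) d\mu(U),$$
these $\sigma$-weak integrals exist in $M$ by Remark \ref{RmkIntExists}, and the left-hand side of \eqref{EqnOrthI} is exactly $\varphi(x^\ast y)$. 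The identity I would exploit is that for $x, y \in \nphi$ one has $\varphi(x^\ast y) = \langle \Lambda(y), \Lambda(x)\rangle$, so the whole problem collapses to the computation of a single inner product in $\hphi$.

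The step that makes Lemma \ref{LemCompI} applicable is a change of variable in the second leg of the matrix coefficient. Since $\eta \in \Dom(D)$ and $D$ is strictly positive, hence injective, the vector $D\eta = \int^\oplus D_U\eta_U d\mu(U)$ lies in $\Dom(D^{-1})$ with $D^{-1}(D\eta) = \eta$; moreover $(\xi_U \otimes \overline{D_U\eta_U})_U$ is square integrable by hypothesis. Applying Lemma \ref{LemCompI} with $D\eta$ in the role of $\eta$ therefore gives $x \in \nphi$ together with
$$\Lambda(x) = \Q_L^{-1}\Bigl(\int^\oplus \xi_U \otimes \overline{D_U\eta_U} d\mu(U)\Bigr),$$
and likewise $\Lambda(y) = \Q_L^{-1}(\int^\oplus \xi_U' \otimes \overline{D_U\eta_U'} d\mu(U))$. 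Because $\Q_L$ is an isometric isomorphism, $\langle \Lambda(y), \Lambda(x)\rangle$ equals the inner product of these two fields in $\int^\oplus \HS(H_U) d\mu(U)$, which unravels fibrewise to $\int \langle \xi_U' \otimes \overline{D_U\eta_U'}, \xi_U \otimes \overline{D_U\eta_U}\rangle_{\mathrm{HS}} d\mu(U)$.

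It then remains to evaluate the Hilbert--Schmidt inner product of two rank-one operators fibrewise. Using the identification $\xi \otimes \overline{\eta}\colon h \mapsto \langle h, \eta\rangle \xi$ and $\Tr(\xi \otimes \overline{\eta}) = \langle \xi, \eta\rangle$, a direct computation gives $\langle \xi_1 \otimes \overline{\eta_1}, \xi_2 \otimes \overline{\eta_2}\rangle_{\mathrm{HS}} = \langle \xi_1, \xi_2\rangle \langle \eta_2, \eta_1\rangle$, which produces precisely the integrand $\langle D_U\eta_U, D_U\eta_U'\rangle \langle \xi_U', \xi_U\rangle$ on the right-hand side of \eqref{EqnOrthI}. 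The relation \eqref{EqnOrthII} follows by the same argument, replacing $\varphi$ by $\psi$, $\Lambda$ by $\Gamma$, $D$ by $E$, Lemma \ref{LemCompI} by Lemma \ref{LemCompII}, and using the isometry of $\Q_R$. There is no deep obstacle; the only points requiring genuine care are verifying that $D\eta$ (respectively $E\eta$) really lands in the domain in which Lemma \ref{LemCompI} (respectively \ref{LemCompII}) applies, and keeping the conjugate-linear bookkeeping in the Hilbert--Schmidt pairing consistent with the ordering of the inner products prescribed on the right-hand side.
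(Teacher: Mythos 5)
Your proposal is correct and follows exactly the route the paper intends: the paper states that the theorem ``follows immediately from the expressions for the Plancherel transformations given in Lemmas \ref{LemCompI} and \ref{LemCompII}'', and your argument simply makes that explicit by applying Lemma \ref{LemCompI} (resp.\ \ref{LemCompII}) with $D\eta$ (resp.\ $E\eta$) in place of $\eta$, invoking the GNS identity $\varphi(x^\ast y)=\langle \Lambda(y),\Lambda(x)\rangle$ and the isometry of $\Q_L$ (resp.\ $\Q_R$), and evaluating the rank-one Hilbert--Schmidt pairings fibrewise. The domain verification for $D\eta$ and the conjugate-linear bookkeeping are handled correctly, so nothing is missing.
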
 

 As observed in Remark \ref{RmkIntExists} the element  $\int (\iota \otimes \omega_{\xi_U,\eta_U})(U^\ast)d\mu(U) \in M$ exists for $\eta = \int^\oplus  \eta_U d\mu(U) \in H$, $\xi = \int^\oplus 
\xi_U d\mu(U) \in H$ and the next theorem investigates the consequences of $\int (\iota \otimes \omega_{\xi_U, \eta_U})(U^\ast)d\mu(U) \in \nphi$.

\begin{Theorem}\label{ThmDomain}
Let $\xi = \int^\oplus_{\ICM}  \xi_U d\mu(U) \in H$ be an essentially bounded field of vectors.
\begin{enumerate}
\item  Let $\eta = \int^\oplus_{\ICM}  \eta_U d\mu(U) \in H$ be such that $\int_{\ICM}  (\iota \otimes \omega_{\xi_U, \eta_U})(U^\ast) d\mu(U) \in \nphi$. Then, for almost every $U$ in the support of $(\xi_U)_U$, we have $\eta_U \in \Dom(D_U)$.
\item Let $\eta = \int^\oplus_{\ICM}  \eta_U d\mu(U) \in H$ be such that $\int_{\ICM}  (\iota \otimes \omega_{\xi_U, \eta_U})(U) d\mu(U) \in \npsi$. Then, for almost every $U$ in the support of $(\xi_U)_U$, we have $\eta_U \in \Dom(E_U)$.
\end{enumerate}
\end{Theorem}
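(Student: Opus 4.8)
The plan is to reconstruct the field $\theta:=\Q_L(\Lambda(x))$, where $x:=\int (\iota\otimes\omega_{\xi_U,\eta_U})(U^\ast)\,d\mu(U)$, and to force it into the ``rank-one'' shape $\theta_U=\xi_U\otimes\overline{D_U\eta_U}$, from which the domain statement is immediate. Since $x\in\nphi$ by hypothesis, $\Lambda(x)\in\hphi$, so $\theta=\int^\oplus\theta_U\,d\mu(U)$ is a genuinely square-integrable field with $\theta_U\in\HS(H_U)$. First I would record two expressions for the pairing with the dense set $\{\xi(\alpha):\alpha\in\I\}$. On one hand, by the definition of $\xi(\alpha)$ together with the $\sigma$-weak integral defining $x^\ast$ (Remark \ref{RmkIntExists}), $\langle\xi(\alpha),\Lambda(x)\rangle=\alpha(x^\ast)=\int\langle(\alpha\otimes\iota)(U)\eta_U,\xi_U\rangle\,d\mu(U)$; on the other hand, by Parseval (Theorem \ref{ThmPlancherelLeft}) and $\Q_L\xi(\alpha)=\int^\oplus(\alpha\otimes\iota)(U)\cdot D_U^{-1}\,d\mu(U)$, the same number equals $\int\langle(\alpha\otimes\iota)(U)D_U^{-1},\theta_U\rangle_{{\rm HS}}\,d\mu(U)$. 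Comparing gives the integrated identity
\[
\int\langle(\alpha\otimes\iota)(U)\eta_U,\xi_U\rangle\,d\mu(U)=\int\langle(\alpha\otimes\iota)(U)D_U^{-1},\theta_U\rangle_{{\rm HS}}\,d\mu(U),\qquad\alpha\in\I .
\]

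The central step is to promote this to a \emph{fibrewise} statement. I would view $\rho(a):=\int\langle\pi_U(a)\eta_U,\xi_U\rangle\,d\mu(U)$ as a bounded normal functional on $\hat{M}=\int^\oplus B(H_U)\,d\mu(U)$, where $\pi_U$ is the fibre representation so that $\pi_U(\lambda(\alpha))=(\alpha\otimes\iota)(U)$ (cf.\ the direct-integral decomposition discussed after Theorem \ref{ThmPlancherelRight}); it is bounded since $|\rho(a)|\le\|a\|\,\|\eta\|\,\|\xi\|$, and by uniqueness of disintegration its fibre functionals are $\rho_U=\omega_{\eta_U,\xi_U}$ for almost every $U$. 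To reach the $\theta$-side fibrewise I would localise the integrated identity by inserting the diagonalisable central operators $z_E:=\int^\oplus\chi_E(U)\,\Id_{H_U}\,d\mu(U)\in Z(\hat{M})$ for measurable $E\subseteq\ICM$: evaluating the Plancherel pairing at $z_E\,\xi(\alpha)$ restricts both integrals to $E$. Running this over a countable family $(\alpha_k)\subseteq\I$ whose images $((\alpha_k\otimes\iota)(U))_k$ are $\sigma$-weakly dense in $B(H_U)$ for almost every $U$ (available from the separability of $\hat{M}_u$, and where the essential boundedness of $(\xi_U)_U$ secures the needed measurability/integrability) should yield, for almost every $U$, the fibrewise identity of functionals $\omega_{\eta_U,\xi_U}(T)=\langle TD_U^{-1},\theta_U\rangle_{{\rm HS}}$ on $\{(\alpha\otimes\iota)(U):\alpha\in\I\}$. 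Testing on rank-one $T=u\otimes\overline{v}$ with $v\in\Dom(D_U^{-1})$, and using $\sigma$-weak density together with normality of $\omega_{\eta_U,\xi_U}$, then gives
\[
\theta_U\,D_U^{-1}=\xi_U\otimes\overline{\eta_U}\qquad\text{on }\Dom(D_U^{-1}),\ \text{for almost every }U .
\]

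The hard part will be exactly this localisation: passing from one scalar identity per $\alpha$ to an almost-everywhere identity of fibre functionals. The obstruction is that a single $\alpha$ probes all fibres at once, so one must genuinely exploit the diagonalisable (central) operators $z_E$ together with separability and the measurable structure of $(\theta_U)_U$. The delicate point is to check that $z_E\,\xi(\alpha)$ reproduces the $E$-restricted $\omega_{\eta_U,\xi_U}$-side of the identity and not merely the $\theta$-side; equivalently, that the bounded normal functional $\rho$ and the (a priori only densely defined) $\theta$-side functional have the same disintegration. Once this is settled, turning the fibrewise functional identity into the displayed operator identity is a routine $\sigma$-weak density argument.

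Granting the fibrewise relation, the conclusion is immediate. For $g\in\Dom(D_U)$ put $h=D_Ug\in\ran(D_U)=\Dom(D_U^{-1})$; then $\theta_U g=(\xi_U\otimes\overline{\eta_U})(D_Ug)=\langle D_Ug,\eta_U\rangle\,\xi_U$. Hence on $\{U:\xi_U\neq0\}$ one has $|\langle D_Ug,\eta_U\rangle|\le\|\theta_U\|\,\|\xi_U\|^{-1}\,\|g\|$ for all $g\in\Dom(D_U)$, so the functional $g\mapsto\langle D_Ug,\eta_U\rangle$ is bounded on $\Dom(D_U)$ and therefore $\eta_U\in\Dom(D_U^\ast)=\Dom(D_U)$, $D_U$ being self-adjoint. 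As $\{U:\xi_U\neq0\}$ is the support of $(\xi_U)_U$, this proves (1). Part (2) is identical after replacing $\Q_L,\,D_U,\,\varphi,\,\Lambda,\,\nphi$ and Lemma \ref{LemCompI} by their right analogues $\Q_R,\,E_U,\,\psi,\,\Gamma,\,\npsi$ and Lemma \ref{LemCompII}.
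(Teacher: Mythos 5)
Your strategy --- reconstruct $\theta=\Q_L(\Lambda(x))$ fibrewise and force $\theta_U D_U^{-1}=\xi_U\otimes\overline{\eta_U}$ --- is genuinely different from the paper's, and your final deduction (boundedness of $g\mapsto\langle D_Ug,\eta_U\rangle$ on $\Dom(D_U)$, hence $\eta_U\in\Dom(D_U^\ast)=\Dom(D_U)$ wherever $\xi_U\neq0$) is correct \emph{granting} the fibrewise identity. But the proof of that identity has a real gap, and it is not where you placed it. The localisation by central projections $z_E$ can indeed be pushed through (one extends the scalar identity $\omega_{\eta,\xi}(a)=\langle\hat\Lambda(a),\Lambda(x)\rangle$ from $\lambda(\I)$ to all of $\mathcal{N}_{\hat\varphi}$ using that $\lambda(\I)$ is a core for $\hat\Lambda$, and uses the module property of $\Q_L$), yielding for a.e.\ $U$ and a countable family $(\alpha_k)$ the identity $\langle(\alpha_k\otimes\iota)(U)\eta_U,\xi_U\rangle=\langle(\alpha_k\otimes\iota)(U)\cdot D_U^{-1},\theta_U\rangle_{{\rm HS}}$. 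The step you call ``a routine $\sigma$-weak density argument'' is the one that fails: the right-hand functional $T\mapsto\langle T\cdot D_U^{-1},\theta_U\rangle_{{\rm HS}}$ is neither everywhere defined nor $\sigma$-weakly continuous in $T$, since $D_U^{-1}$ is in general unbounded. Concretely, to reach $T=u\otimes\overline{v}$ you would need operators $a_j$ in the span of the $(\alpha_k\otimes\iota)(U)$ with $\overline{a_jD_U^{-1}}\to u\otimes\overline{D_U^{-1}v}$ in Hilbert--Schmidt norm \emph{and} $a_j\to u\otimes\overline{v}$ weakly; the first convergence only gives $a_jg\to\langle g,v\rangle u$ for $g\in\Dom(D_U)$ with no uniform bound on $\Vert a_j\Vert$, so you cannot pass to the limit in $\langle a_j\eta_U,\xi_U\rangle$. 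Nothing in your setup supplies that control, and normality of $\omega_{\eta_U,\xi_U}$ only helps on the side that was never the problem.

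The paper sidesteps the fibrewise reconstruction entirely. It considers the quadratic form $q(\eta)=\varphi(x^\ast x)=\Vert\Lambda(x)\Vert^2$ with domain $\{\eta:x\in\nphi\}$, proves $q$ is \emph{closed} using the $\sigma$-weak--weak closedness of $\Lambda$, and densely defined because $\Dom(D)\subseteq\Dom(q)$ by Lemma \ref{LemCompI} (here the essential boundedness of $(\xi_U)_U$ enters). Kato's representation theorem produces a positive self-adjoint $A$ with $\Dom(A)=\Dom(q)$ and $q(\eta,\eta')=\langle A\eta,A\eta'\rangle$, and the orthogonality relations (Theorem \ref{ThmOrthogonalitRel}) identify $A$ with $\int^\oplus\Vert\xi_U\Vert D_U\,d\mu(U)$ on the core $\Dom(D)$; membership of $\eta$ in $\Dom(A)$ then gives $\eta_U\in\Dom(D_U)$ a.e.\ where $\xi_U\neq0$. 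If you want to salvage your route, the cleanest repair is to import exactly this idea: rather than identifying $\theta_U$ against arbitrary rank-one operators, use that $\Vert\theta\Vert^2=q(\eta)$ together with the orthogonality relations pins down the closed form $q$, which is what actually controls $\Dom(D_U)$.
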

\begin{proof} We only give a proof of the first statement. Consider the sesquilinear form
\[
q(\eta, \eta') = \varphi\left( \int (\iota \otimes \omega_{\xi_U, \eta_U})(U^\ast) d\mu(U) ^\ast \int (\iota \otimes \omega_{\xi_U, \eta_U'})(U^\ast) d\mu(U) \right), 
\]
with
\[
q(\eta) = q(\eta,\eta), \quad \Dom(q) = \left\{ \eta = \int^\oplus \eta_U d\mu(U) \mid \int (\iota \otimes \omega_{\xi_U, \eta_U})(U^\ast) d\mu(U) \in \nphi\right\}.
\]
$q$ is a closed form on $H$. Indeed, assume that $\eta_n\in \Dom(q)$ converges in norm to $\eta \in H$ and that $q(\eta_n - \eta_m) \rightarrow 0$. Then  $\int (\iota \otimes \omega_{\xi_U, \eta_{n,U}})(U^\ast) d\mu(U)$ converges to $\int (\iota \otimes \omega_{\xi_U, \eta_{U}})(U^\ast) d\mu(U)$ $\sigma$-weakly. By assumption $\Lambda(\int (\iota \otimes \omega_{\xi_U, \eta_{n,U}})(U^\ast) d\mu(U))$ is a Cauchy sequence in norm. The $\sigma$-weak-weak closedness of $\Lambda$ implies that $\int (\iota \otimes \omega_{\xi_U, \eta_{U}})(U^\ast) d\mu(U) \in \Dom(\Lambda) = \nphi$, so $\eta \in \Dom(q)$ and $\Lambda(\int (\iota \otimes \omega_{\xi_U, \eta_{n,U}})(U^\ast) d\mu(U))$ converges to $\Lambda(\int (\iota \otimes \omega_{\xi_U, \eta_{U}})(U^\ast) d\mu(U))$ weakly. Since we know that $\Lambda(\int (\iota \otimes \omega_{\xi_U, \eta_{n,U}})(U^\ast) d\mu(U))$ is a actually a Cauchy sequence in the norm topology it is norm convergent to $\Lambda(\int (\iota \otimes \omega_{\xi_U, \eta_{U}})(U^\ast) d\mu(U))$. This proves that $q(\eta - \eta_n) \rightarrow 0$.

Since $(\xi_U)_U$ is a square integrable, essentially bounded
field of vectors, $\int^\oplus \xi_U  \otimes \overline{ \eta_U}
d\mu(U) \in B_2(H)$. By Lemma \ref{LemCompI}, $\Dom(D) \subseteq
\Dom(q)$, so that $q$ is densely defined. $q$ is symmetric and
positive by its definition. By \cite[Theorem VI.2.23]{Kat}, there
is a unique positive, self-adjoint, possibly unbounded operator $A$ on $H$
such that $q(\eta, \eta') = \langle A \eta, A \eta' \rangle$ and
$\Dom(A) = \Dom(q)$. By Theorem \ref{ThmOrthogonalitRel} we see
that for $\eta, \eta'\in \Dom(D)$ we have $\int \langle D_U
\eta_U, D_U \eta_U' \rangle \Vert \xi_U \Vert^2 d\mu(U) = \langle
A \eta, A \eta' \rangle$. Since both $A$ and $\int^\oplus \Vert
\xi_U \Vert d\mu(U)$ are positive, self-adjoint operators this
yields $A = \int^\oplus \Vert \xi_U \Vert D_U d\mu(U)$. In
particular $\eta_U \in \Dom(D_U)$ for almost every $U\in {\rm
supp}\left((\xi_U)_U \right) = \overline{\left\{U \in \IC(M) \mid
\Vert \xi_U \Vert \not = 0  \right\}}$. \end{proof}

\section{Modular properties of matrix coefficients}\label{SectModular}

In this section we work towards expressions for the modular automorphism group of the left and right Haar weight in terms of matrix elements of corepresentations, culminating in Theorem \ref{ThmModularExpression}. The matrix coefficients of corepresentations are preserved under the modular automorphism group. The idea of proving this formula is to describe the polar decomposition of the conjugation operator $\Gamma(x) \mapsto \Lambda(x^\ast), x \in \npsi \cap \nphi^\ast$ explicitly in terms of corepresentations. Then, for a unimodular quantum group, where $\Gamma = \Lambda$, the modular automorphism group is implemented by the absolute value of this operator. 

Recall that in this section we use the notational conventions of Notation \ref{NtNotationDrieEnVier}.

\vspace{0.3cm}

At this point we recall the relevant results from the theory of normal, semi-finite, faithful (n.s.f.) weights and their modular automorphism groups. This is contained in \cite[Chapters VI, VII, VIII]{TakII}. We emphasize that the notation sometimes differs from \cite{TakII}.

Consider the following two operators \cite[Section VIII.3]{TakII}
\begin{equation}\label{EqnConjugationI}
\begin{split}
\conj_{\psi,0}: \hpsi \rightarrow \hpsi: \Gamma(x) &\mapsto \Gamma(x^\ast), \quad x \in \npsi \cap \npsi^\ast, \\
\conj_0: \hpsi \rightarrow \hphi: \Gamma(x) &\mapsto \Lambda(x^\ast), \quad x \in \npsi \cap \nphi^\ast.
\end{split}
\end{equation}
\noindent Both operators are densely defined and preclosed. We denote their closures by $\conj_\psi$ and $\conj$, respectively. $\conj_\psi$ and $\conj$ correspond to $S_\psi$ and $S_{\varphi, \psi}$ in \cite[Section VIII.3, (13)]{TakII}. We denote their polar decompositions by $\conj_\psi = J_\psi \NHalf_\psi, \conj = J \NHalf$. By construction, $J_\psi$ and $\nabla_\psi$ are the modular conjugation and the modular operator appearing in the Tomita-Takesaki theorem. In particular, $\nabla_\psi$ implements the modular automorphism group $\sigma_t^\psi$, i.e.
\begin{equation}\label{EqnModularAutomorphism}
\sigma_t^\psi(\pipsi(x)) = \nabla_\psi^{it} \pipsi(x) \nabla_\psi^{-it}, \qquad x \in M,\: t \in \mathbb{R}.
\end{equation}
\noindent Furthermore, $\nabla_\psi^{it}, t \in \mathbb{R}$, is a homomorphism of the left Hilbert algebra $\Gamma(\npsi \cap \npsi^\ast)$, i.e.
\begin{equation}\label{EqnHilbertProduct}
\nabla_\psi^{it} \pipsi(x) \nabla_\psi^{-it} \Gamma(y) = \pi_l(\nabla_\psi^{it} \Gamma(x)) \Gamma(y),
 \end{equation}
\noindent where $\pi_l(a)b = ab$ for $a, b \in \Gamma(\npsi \cap \npsi^\ast)$. By \cite[Section VIII.3, (11) and (29)]{TakII} the modular automorphism group $\sigma_t^\varphi$ is implemented by $\nabla$, i.e.
\begin{equation}\label{EqnImplementation}
\sigma_t^\varphi(x) = \nabla^{it} x \nabla^{-it}, \qquad x \in M,\: t \in \mathbb{R}.
\end{equation}
\noindent We emphasize that in general $\nabla^{it}, t \in \mathbb{R}$, fails to be a Hilbert algebra homomorphism of the left Hilbert algebra $\Gamma(\npsi \cap \npsi^\ast)$. In case $(M, \Delta)$ is unimodular, we find that $\nabla = \nabla_\psi$ and $\nabla^{it}, t \in \mathbb{R}$, satisfies the relation (\ref{EqnHilbertProduct}). This fact will eventually lead to Theorem \ref{ThmModularExpression}. However, we present the theory more general and do not suppose that $(M, \Delta)$ is unimodular until this theorem.

It turns out that the polar decomposition of $\conj$ can be expressed in terms of corepresentations by means of the Plancherel theorems. The polar decomposition of $\Q_L \circ \conj \circ \Q_R^{-1}$ and the morphisms $\Q_L$ and $\Q_R$ give the polar decomposition of $\conj$. Eventually this yields Theorems \ref{ThmDecompositionI} and \ref{ThmDecompositionII}.

\begin{Remark}\label{RmkNotationDom}  For $\xi = \int ^\oplus\xi_U
d\mu(U)\in H$ and $\eta = \int^\oplus \eta_U d\mu(U)\in H$, and $A
= \int^\oplus A_U d\mu(U)$, $B=\int^\oplus B_U d\mu(U)$ decomposable
operators on $H$, we will use $(\xi, \overline{\eta})\in \Domt(A,\overline{B})$ to mean
 $\xi \in \Dom(A), \overline{\eta} \in
\Dom(\overline{B})$, $(\xi_U \otimes \overline{\eta_U})_U$ is
square integrable and $\int^\oplus\xi_U\otimes \overline{\eta_U}
d\mu(U) \in \Dom(\int ^\oplus(A_U \otimes
\overline{B_U})d\mu(U))$. For closed opeators $A$ and $B$ the set of $\int^\oplus \xi_U \otimes \overline{\eta_U} d\mu(U)$ with  $(\xi_U, \overline{\eta_U})
\in \Domt(A,\overline{B})$ is a core for $\int^\oplus (A_U \otimes \overline{B_U}) d\mu(U)$ by Lemma \ref{LemTensorCore}. In particular this set is dense in $\int^\oplus H_U \otimes
\overline{H_U} d\mu(U)$. 
 
Let $\Sigma$ be the anti-linear flip
$
\Sigma: \int ^\oplus H_U \otimes \overline{H_U} d\mu(U) \rightarrow \int ^\oplus H_U \otimes \overline{H_U}  d\mu(U):\:\:
\int ^\oplus \xi_U \otimes \overline{\eta_U} d\mu(U) \mapsto
\int ^\oplus \eta_U \otimes \overline{\xi_U} d\mu(U).
$
\noindent $\Sigma$ is an anti-linear isometry of
$\int ^\oplus H_U \otimes \overline{H_U} d\mu(U)$.
\end{Remark}

\begin{Lemma}\label{LemCompIII}
For $\eta = \int ^\oplus_{\ICM} \eta_U d\mu(U), \xi =
\int^\oplus_{\ICM} \xi_U d\mu(U) \in H$, with $(\eta, \overline{\xi}) \in \Domt(E^{-1}, \overline{D}) )$, we have $\Q_R^{-1}\left(
\int^\oplus_{\ICM} \xi_U  \otimes \overline{ \eta_U}   d\mu(U)\right) \in \Dom(S)$
and:
\begin{equation}\label{EqnConjPlancherel}
\Q_L \circ \conj \circ \Q_R^{-1}\left( \int^\oplus_{\ICM} \xi_U  \otimes \overline{ \eta_U}   d\mu(U)\right) = \left( \int^\oplus_{\ICM} E_U^{-1}\eta_U  \otimes \overline{ D_U \xi_U}   d\mu(U)\right).
\end{equation}
\end{Lemma}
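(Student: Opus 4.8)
The plan is to verify the identity \eqref{EqnConjPlancherel} by testing it against the dense set of vectors $\xi(\alpha)$, $\alpha \in \I$, and using the explicit formulas for $\Q_L^{-1}$ and $\Q_R^{-1}$ from Lemmas \ref{LemCompI} and \ref{LemCompII}. The conjugation $\conj$ is defined by $\Gamma(x) \mapsto \Lambda(x^\ast)$, so the natural route is to produce an element $x \in \npsi$ whose image $\Gamma(x)$ under $\Q_R^{-1}$ is the given vector, check that $x^\ast \in \nphi$, and then compute $\Lambda(x^\ast)$ via $\Q_L$.

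\medskip

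\noindent\textbf{Step 1.} First I would apply Lemma \ref{LemCompII} to the vector $\int^\oplus \xi_U \otimes \overline{\eta_U}\, d\mu(U)$. The hypothesis $(\eta, \overline{\xi}) \in \Domt(E^{-1}, \overline{D})$ ensures in particular that $\eta \in \Dom(E^{-1})$ and that $(\xi_U \otimes \overline{\eta_U})_U$ is square integrable, so Lemma \ref{LemCompII} gives
\[
\Q_R^{-1}\!\left( \int^\oplus \xi_U \otimes \overline{\eta_U}\, d\mu(U)\right) = \Gamma(x), \qquad x = \int (\iota \otimes \omega_{\xi_U, E_U^{-1}\eta_U})(U)\, d\mu(U) \in \npsi.
\]

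\medskip

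\noindent\textbf{Step 2.} The heart of the argument is to show $x \in \nphi^\ast$, equivalently $x^\ast \in \nphi$, so that $\Gamma(x) \in \Dom(\conj)$ and $\conj \Gamma(x) = \Lambda(x^\ast)$. Here I would compute $x^\ast$ explicitly. Using that $U$ is unitary, $\omega_{\xi_U, E_U^{-1}\eta_U}(A) = \langle A\xi_U, E_U^{-1}\eta_U\rangle$, and the adjoint relation $(\iota \otimes \omega_{\zeta,\zeta'})(U)^\ast = (\iota \otimes \omega_{\zeta',\zeta})(U^\ast)$, one finds
\[
x^\ast = \int (\iota \otimes \omega_{E_U^{-1}\eta_U, \xi_U})(U^\ast)\, d\mu(U).
\]
To recognize this as an element of $\nphi$, I would rewrite it so as to match the integrand of Lemma \ref{LemCompI}. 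That lemma applies to $\int (\iota \otimes \omega_{\zeta_U, D_U^{-1}\zeta_U'})(U^\ast)\, d\mu(U)$ with second entry of the form $D_U^{-1}(\cdot)$. The natural identification is $\zeta_U = E_U^{-1}\eta_U$ and $D_U^{-1}\zeta_U' = \xi_U$, i.e.\ $\zeta_U' = D_U\xi_U$. The condition $\overline{\xi} \in \Dom(\overline{D})$ is exactly what makes $\xi_U \in \Dom(D_U)$ almost everywhere meaningful, and together with square integrability of $(E_U^{-1}\eta_U \otimes \overline{D_U\xi_U})_U$ — which is the content of $\int^\oplus \xi_U \otimes \overline{\eta_U}\, d\mu(U) \in \Dom(\int^\oplus E_U^{-1}\otimes \overline{D_U}\, d\mu(U))$ — Lemma \ref{LemCompI} then yields both $x^\ast \in \nphi$ and
\[
\Lambda(x^\ast) = \Q_L^{-1}\!\left( \int^\oplus (E_U^{-1}\eta_U) \otimes \overline{D_U\xi_U}\, d\mu(U)\right).
\]

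\medskip

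\noindent\textbf{Step 3.} Combining Steps 1 and 2 gives $\Q_R^{-1}(\cdots) = \Gamma(x) \in \Dom(\conj)$ with $\conj\Gamma(x) = \Lambda(x^\ast)$, and applying $\Q_L$ to the last displayed equation yields precisely \eqref{EqnConjPlancherel}. \textbf{The main obstacle} I anticipate is the careful bookkeeping in Step 2: correctly tracking the conjugate-linearity of the second slot of $\omega_{\cdot,\cdot}$ and of $\overline{H_U}$ when taking adjoints, and verifying that the hypothesis $(\eta,\overline{\xi}) \in \Domt(E^{-1},\overline{D})$ delivers exactly the domain and square-integrability conditions that Lemma \ref{LemCompI} requires for the integrand with second entry $D_U\xi_U$ (rather than $D_U^{-1}$ of something). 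Matching these domain conditions across the two lemmas, and confirming that $D$ appears and not $D^{-1}$ on the output side, is where the appearance of $\overline{D_U\xi_U}$ on the right of \eqref{EqnConjPlancherel} must be justified.
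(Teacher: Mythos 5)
Your proposal is correct and follows essentially the same route as the paper: apply Lemma \ref{LemCompII} to identify $\Q_R^{-1}$ of the given vector as $\Gamma(x)$ with $x=\int(\iota\otimes\omega_{\xi_U,E_U^{-1}\eta_U})(U)\,d\mu(U)$, compute $x^\ast=\int(\iota\otimes\omega_{E_U^{-1}\eta_U,\xi_U})(U^\ast)\,d\mu(U)$, and recognize it via Lemma \ref{LemCompI} (with $\xi_U=D_U^{-1}(D_U\xi_U)$) as an element of $\nphi$ whose image under $\Lambda$ is $\Q_L^{-1}$ of the claimed right-hand side. The only quibble is a harmless ordering slip in Step 2, where the hypothesis $(\eta,\overline{\xi})\in\Domt(E^{-1},\overline{D})$ refers to $\int^\oplus\eta_U\otimes\overline{\xi_U}\,d\mu(U)$ lying in $\Dom\bigl(\int^\oplus E_U^{-1}\otimes\overline{D_U}\,d\mu(U)\bigr)$ rather than the flipped tensor you wrote; the substance (square integrability of $(E_U^{-1}\eta_U\otimes\overline{D_U\xi_U})_U$) is exactly right.
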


\begin{proof} By Lemma \ref{LemCompII}:
\begin{equation}\label{EqnLemCompIII}
\Q_R^{-1}\left( \int^\oplus \xi_U  \otimes \overline{ \eta_U}
d\mu(U)\right) = \Gamma\left( \int  (\iota \otimes \omega_{\xi_U,
E_U^{-1} \eta_U})(U) d\mu(U)\right).
\end{equation}
By Lemmas \ref{LemCompI} and \ref{LemCompII} we obtain
$$\int  (\iota \otimes
\omega_{\xi_U, E_U^{-1} \eta_U})(U) d\mu(U) = \left(\int  (\iota \otimes
\omega_{E_U^{-1} \eta_U, \xi_U})(U^\ast) d\mu(U)\right)^\ast \in \npsi
\cap\nphi^\ast.$$
Hence, by (\ref{EqnConjugationI}), (\ref{EqnLemCompIII}) and Lemma \ref{LemCompI}
\[
\begin{split}
\Q_L \circ \conj \circ \Q_R^{-1}\left( \int^\oplus \xi_U  \otimes \overline{ \eta_U}  d\mu(U)\right) & =  \Q_L \left(\Lambda\left( \int  (\iota \otimes \omega_{\xi_U,E_U^{-1} \eta_U})(U) d\mu(U)^\ast\right) \right)\\
& =  \left( \int^\oplus E_U^{-1}\eta_U  \otimes \overline{ D_U \xi_U}
d\mu(U)\right),  
\end{split} 
\]
from which the lemma follows.
\end{proof}

We are now able to give the polar decomposition of  $\Q_L \circ \conj \circ \Q_R^{-1}$.

\begin{Theorem}\label{ThmPolarDecI}
Consider $\conj_\Q := \Q_L \circ \conj \circ \Q_R^{-1}$  as an
operator on $\int^{\oplus}_{\ICM}  H_U \otimes \overline{H_U} d\mu(U)$.
Then the polar decomposition of $\conj_\Q$ is given by the
self-adjoint, strictly positive operator $\int^{\oplus}_{\ICM}  D_U
\otimes \overline{E_U^{-1}} d\mu(U)$ and the anti-linear isometry
$\Sigma$.
\end{Theorem}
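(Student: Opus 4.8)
The goal is to prove the operator identity $\conj_\Q = \Sigma P$, where $P := \int^\oplus D_U \otimes \overline{E_U^{-1}}\, d\mu(U)$; once this is in hand the theorem follows from the uniqueness of the polar decomposition. Indeed, $\Sigma$ is an anti-linear isometry with $\Sigma^2 = \Id$ (Remark \ref{RmkNotationDom}), and $P$ is strictly positive and self-adjoint, being the direct integral of the strictly positive self-adjoint operators $D_U \otimes \overline{E_U^{-1}}$. Granting $\conj_\Q = \Sigma P$ one computes $\conj_\Q^\ast \conj_\Q = P\Sigma\,\Sigma P = P^2$, so that $|\conj_\Q| = P$ and the anti-linear isometric part of $\conj_\Q$ is $\Sigma$, which is exactly the assertion.

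First I would prove the inclusion $\conj_\Q \supseteq \Sigma P$. For $\int^\oplus \xi_U \otimes \overline{\eta_U}\, d\mu(U)$ in the domain of Lemma \ref{LemCompIII} one has
\[
\Sigma P \Bigl( \int^\oplus \xi_U \otimes \overline{\eta_U}\, d\mu(U)\Bigr) = \Sigma\Bigl(\int^\oplus D_U\xi_U \otimes \overline{E_U^{-1}\eta_U}\, d\mu(U)\Bigr) = \int^\oplus E_U^{-1}\eta_U \otimes \overline{D_U \xi_U}\, d\mu(U),
\]
which is precisely the value of $\conj_\Q$ given by \eqref{EqnConjPlancherel}. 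Hence $\conj_\Q$ and $\Sigma P$ agree on the set $\mathcal{C}$ of such vectors. By Remark \ref{RmkNotationDom} (through Lemma \ref{LemTensorCore}) this set is a core for $P$, and since $\Sigma$ is bounded with bounded inverse it is a core for $\Sigma P$ as well; as $\conj_\Q$ is closed, it follows that $\conj_\Q \supseteq \Sigma P$.

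The hard part will be the reverse inclusion, i.e. showing that $\Dom(\conj_\Q)$ is no larger than $\Dom(P)$; the closure $\conj = \overline{\conj_0}$ has no transparent domain, and an extension of closed operators need not be an equality. To handle this I would bring in the abstract polar decomposition. Conjugating the Tomita--Takesaki decomposition $\conj = J\NHalf$ by the unitaries $\Q_L,\Q_R$ gives $\conj_\Q = J_\Q N$ with $J_\Q := \Q_L J \Q_R^{-1}$ anti-unitary and $N := \Q_R \NHalf \Q_R^{-1}$ strictly positive self-adjoint (strictly positive because $\varphi,\psi$ are faithful, so that $\nabla$ has trivial kernel). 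The inclusion $\Sigma P \subseteq \conj_\Q = J_\Q N$ then yields, for $v \in \Dom(P)$, the identity $N v = J_\Q^\ast \Sigma P v = W P v$, where $W := J_\Q^\ast \Sigma$ is a (linear) unitary; thus $N \supseteq WP$. Since $N$ is self-adjoint and positive, $WP$ is symmetric and $\langle WPv, v\rangle = \langle Nv,v\rangle \geq 0$, so $WP \geq 0$.

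It therefore suffices to prove the following: a unitary $W$ for which $WP$ is symmetric and positive, with $P$ strictly positive self-adjoint, must equal $\Id$. The computation $(WP)^\ast(WP) = P W^\ast W P = P^2$ shows $|WP| = P$; combining this with symmetry (which forces $WP \subseteq PW^\ast$, hence the relation $PW^\ast = WP$ on $\Dom(P)$) and positivity, one sees that $WP$ is in fact self-adjoint and equal to its modulus $P$, whence $W = \Id$ on the dense range of $P$ and so $\conj_\Q = J_\Q N = \Sigma P$. Making the passage from ``symmetric'' to ``self-adjoint'' rigorous --- equivalently, verifying the domain equality $\Dom(\conj_\Q) = \Dom(P)$ --- is the step I expect to require the most care; an alternative and perhaps cleaner route to the same end is to show directly that the vectors of Lemma \ref{LemCompIII} form a core for $\conj$ in its graph norm, so that $\conj_\Q = \overline{\conj_\Q|_{\mathcal C}} = \overline{\Sigma P|_{\mathcal C}} = \Sigma P$.
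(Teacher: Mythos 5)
Your first half --- the inclusion $\conj_\Q \supseteq \Sigma P$, where $P := \int^\oplus D_U \otimes \overline{E_U^{-1}}\, d\mu(U)$, obtained from Lemma \ref{LemCompIII} together with the core statement of Remark \ref{RmkNotationDom} --- is correct and coincides with the final step of the paper's proof. The gap lies in the reverse inclusion, and it is not merely a matter of care: the abstract lemma you reduce to is false. A unitary $W$ such that $WP$ is symmetric and positive (in the form sense), with $P$ strictly positive and self-adjoint, need not equal $\Id$. Take for $WP$ the closure $T$ of $-d^2/dx^2$ on $C_c^\infty(0,\infty) \subseteq L^2(0,\infty)$: this $T$ is closed, symmetric, positive, injective, and has $\ker T^\ast = 0$ (no nonzero affine function lies in $L^2(0,\infty)$), so the partial isometry $W$ in its polar decomposition $T = W|T|$ is unitary; yet $T$ has deficiency indices $(1,1)$, is not self-adjoint, and $W \neq \Id$. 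All of your hypotheses hold in this example, including the existence of a positive self-adjoint extension $N \supseteq WP$ (the Friedrichs extension), so the passage from ``symmetric and positive'' to ``self-adjoint'' genuinely fails; equivalently, nothing you have written excludes $\Dom(\conj_\Q) \supsetneq \Dom(P)$.

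The missing ingredient is a computation of the adjoint. The paper evaluates $\langle u, \conj_\Q v\rangle$ for $u, v$ ranging over the vectors of Lemma \ref{LemCompIII} and reads off $\conj_\Q^\ast\bigl(\int^\oplus \xi_U \otimes \overline{\eta_U}\, d\mu(U)\bigr) = \int^\oplus D_U\eta_U \otimes \overline{E_U^{-1}\xi_U}\, d\mu(U)$, whence $\conj_\Q^\ast\conj_\Q$ agrees with $\int^\oplus D_U^2 \otimes \overline{E_U^{-2}}\, d\mu(U)$ on a core of the latter (Lemma \ref{LemTensorCore}). Since $\conj_\Q^\ast\conj_\Q$ is automatically self-adjoint (von Neumann's theorem for closed densely defined operators) and a self-adjoint operator admits no proper symmetric extension, the two operators coincide; this is what pins down $|\conj_\Q| = P$ and hence the domain equality $\Dom(\conj_\Q) = \Dom(P)$, after which your core argument finishes the proof. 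Your alternative suggestion --- showing that the vectors of Lemma \ref{LemCompIII} form a core for $\conj$ --- would also suffice, but it is exactly as hard as the domain equality itself and is not carried out. You should replace the false abstract lemma by the adjoint computation.
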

\begin{proof} Throughout this proof, let $\eta = \int^\oplus
\eta_U d\mu(U) \in H$, $\xi = \int^\oplus \xi_U d\mu(U) \in H$,
$\eta' = \int^\oplus \eta_U' d\mu(U) \in H$ and $\xi' =
\int^\oplus \xi_U' d\mu(U) \in H$ be such that $(\eta_U \otimes
\overline{\xi_U})_U$ and $(\eta_U' \otimes \overline{\xi_U'})_U$
are square integrable.

Assume $(\eta, \overline{\xi}) \in \Domt(D,
\overline{E^{-1}})$, $(\xi', \overline{\eta'}) \in \Domt(D,
\overline{E^{-1}})$, so that by (\ref{EqnConjPlancherel}),
\[
\begin{split}
&\langle \int^\oplus  (\xi_U \otimes \overline{\eta_U}) d\mu(U), \conj_\Q \int^\oplus  (\xi_U' \otimes \overline{\eta_U'}) d\mu(U) \rangle  = \\ 
& \langle
\int^\oplus  (\xi_U \otimes \overline{\eta_U}) d\mu(U), \int^\oplus  (E_U^{-1}\eta_U' \otimes \overline{D_U \xi_U'}) d\mu(U) \rangle  = \\ &
 \int  \langle \xi_U, E_U^{-1} \eta_U' \rangle \langle D_U \xi_U', \eta_U \rangle d\mu(U)  = \\
& \int  \langle E_U^{-1} \xi_U,  \eta_U' \rangle \langle  \xi_U', D_U\eta_U \rangle  d\mu(U)   = \\ &  
 \langle \int^\oplus  (\xi_U' \otimes \overline{\eta_U'})
d\mu(U),\int^\oplus  (D_U \eta_U \otimes \overline{E_U^{-1}\xi_U})
d\mu(U) \rangle.
\end{split}
\]
\noindent So $\conj_\Q ^\ast \left( \int^\oplus  (\xi_U \otimes
\overline{\eta_U}) d\mu(U) \right) = \int^\oplus  (D_U \eta_U
\otimes \overline{E_U^{-1}\xi_U}) d\mu(U)$.

Assuming $(\xi, \overline{\eta}) \in \Domt(D^2, \overline{E^{-2}})$, it follows
$$\conj_\Q^\ast \conj_\Q\left( \int^\oplus  (\xi_U \otimes \overline{\eta_U}) d\mu(U) \right) = \int^\oplus  (D_U^2\xi_U \otimes \overline{E_U^{-2}\eta_U})
d\mu(U).$$
\noindent $\int ^\oplus D_U^2 \otimes \overline{E_U^{-2}} d\mu(U)$ is a
positive, self-adjoint operator for which the set
$$C := \textrm{span}_\mathbb{C} \left\{ \int ^\oplus (\xi_U
\otimes \overline{\eta_U}) d\mu(U)  \mid (\xi, \eta) \in
\Domt(D^2, E^{-2}) \right\},$$
\noindent forms a core by Lemma \ref{LemTensorCore}. Since $\conj_\Q^\ast \conj_\Q$ is self-adjoint and agrees with the self-adjoint operator $\int
^\oplus D_U^2 \otimes \overline{E_U^{-2}} d\mu(U)$ on $C$ we find
$\conj_\Q^\ast \conj_\Q = \int ^\oplus D_U^2 \otimes
\overline{E_U^{-2}} d\mu(U)$.

Assuming that $(\xi, \overline{\eta}) \in \Domt(D,
\overline{E^{-1}})$,
\[
\begin{split}
&\Sigma \circ \left(\int ^\oplus D_U \otimes \overline{E_U^{-1}}
d\mu(U)\right) \left( \int^\oplus  (\xi_U \otimes \overline{\eta_U}) d\mu(U) \right) =\\  &
\Sigma \left( \int^\oplus  (D_U \xi_U \otimes \overline{E_U^{-1} \eta_U}) d\mu(U) \right)   =
\int^\oplus  (E_U^{-1} \eta_U \otimes \overline{D_U \xi_U})
d\mu(U)  ,
\end{split}
\]
\noindent so that $\conj_\Q$ and $\Sigma \circ \left(\int^\oplus
D_U \otimes \overline{E_U^{-1}} d\mu(U)\right)$ agree on a core,
cf. Remark \ref{RmkNotationDom}. \end{proof}

Finally we translate everything back to the level of the
GNS-representations $\hphi$ and $\hpsi$.


\begin{Proposition}\label{PropNabla}
Let
\[
\begin{split}
\dnablanot = & {\rm span}_\mathbb{C} \{ \int_{\ICM} 
\el{U}{\xi_U}{\eta_U} d\mu(U) \mid {\rm where }   \\ &   \qquad \qquad \eta \in \Dom(E) \cap
\Dom(E^{-1}), (\xi, E \eta) \in \Domt(D,E^{-1}) \},
\end{split}
\]
and define $\NHalf_0 :\: \Gamma(\dnablanot) \rightarrow \hpsi$ by
$$ \Gamma(\int_{\ICM}  \el{U}{\xi_U}{\eta_U} d\mu(U)) \mapsto  \Gamma(\int_{\ICM}  \el{U}{D_U \xi_U}{E_U^{-1}\eta_U} d\mu(U)).$$
Then $\NHalf_0$ is a densely defined, preclosed
operator and its closure $\NHalf$, is a self-adjoint,
strictly positive operator satisfying $\Q_R \circ \NHalf \circ
\Q_R^{-1} = \int^{\oplus}_{\ICM}   D_U \otimes \overline{E_U^{-1}} d\mu(U)
$.
\end{Proposition}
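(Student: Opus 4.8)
The plan is to recognise $\NHalf_0$ as the image, under the right Plancherel transform $\Q_R$, of the decomposable operator $N := \int^\oplus_{\ICM} D_U \ot \overline{E_U^{-1}}\, d\mu(U)$, and then to check that $\Gamma(\dnablanot)$ is a core. First I would apply Lemma \ref{LemCompII} to both the argument and the value of $\NHalf_0$. Writing $\zeta = E\eta$, the hypotheses $\eta\in\Dom(E)\cap\Dom(E^{-1})$ and $(\xi,E\eta)\in\Domt(D,E^{-1})$ give, via Lemma \ref{LemCompII}, $\Gamma(\int \el{U}{\xi_U}{\eta_U}\, d\mu(U)) = \Q_R^{-1}(\int^\oplus \xi_U\ot\overline{E_U\eta_U}\, d\mu(U))$ and $\Gamma(\int \el{U}{D_U\xi_U}{E_U^{-1}\eta_U}\, d\mu(U)) = \Q_R^{-1}(\int^\oplus D_U\xi_U\ot\overline{\eta_U}\, d\mu(U))$, the square-integrability needed for the second application being exactly the statement that $\int^\oplus\xi_U\ot\overline{E_U\eta_U}\,d\mu(U)\in\Dom(N)$. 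Since $N(\int^\oplus \xi_U\ot\overline{E_U\eta_U}\,d\mu(U)) = \int^\oplus D_U\xi_U\ot\overline{\eta_U}\,d\mu(U)$, this identifies $\NHalf_0 = \Q_R^{-1}\,N\,\Q_R$ on $\Gamma(\dnablanot)$. In particular $\NHalf_0$ is well defined, its value depending only on the vector $\int^\oplus\xi_U\ot\overline{E_U\eta_U}\,d\mu(U)$ and the fixed operator $N$, and it is a restriction of a single closed operator.

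Next I would note that $N$ is self-adjoint and strictly positive, being a direct integral of the strictly positive self-adjoint operators $D_U\ot\overline{E_U^{-1}}$; hence so is $\Q_R^{-1} N \Q_R$, as $\Q_R$ is a Hilbert space isomorphism. Consequently $\NHalf_0$ is preclosed, and every assertion of the Proposition --- in particular $\Q_R\circ\NHalf\circ\Q_R^{-1}=N$ --- follows once the closure of $\NHalf_0$ is shown to be all of $\Q_R^{-1}N\Q_R$, i.e. once $\Gamma(\dnablanot)$ is a core for $\Q_R^{-1}N\Q_R$, equivalently $\Q_R\Gamma(\dnablanot)$ is a core for $N$. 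The substitution $\zeta=E\eta$ is a bijection of $\{\eta\in\Dom(E)\cap\Dom(E^{-1})\}$ onto $\{\zeta\in\Dom(E^{-2})\}$, so this set is the span of the $\int^\oplus\xi_U\ot\overline{\zeta_U}\,d\mu(U)$ with $(\xi,\zeta)\in\Domt(D,E^{-1})$ together with the extra constraint $\zeta\in\Dom(E^{-2})$.

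The core statement is the main obstacle. By Lemma \ref{LemTensorCore} the vectors satisfying only $(\xi,\zeta)\in\Domt(D,E^{-1})$ already form a core for $N$, so it suffices to show that imposing $\zeta\in\Dom(E^{-2})$ retains a core. I would approximate in the graph norm of $N$ using the bounded decomposable spectral cut-offs $P_n := \int^\oplus \chi_{[0,n]}(E_U^{-1})\, d\mu(U)$: replacing $\zeta$ by $\zeta^{(n)} := P_n\zeta$ leaves $\xi$ unchanged, puts $\zeta^{(n)}$ in $\Dom(E^{-2})$ (since $\|E_U^{-2}\chi_{[0,n]}(E_U^{-1})\|\le n^2$), and preserves all the $\Domt(D,E^{-1})$ conditions, the membership $\int^\oplus\xi_U\ot\overline{\zeta^{(n)}_U}\,d\mu(U)\in\Dom(N)$ following from $\|E_U^{-1}\zeta^{(n)}_U\|\le\|E_U^{-1}\zeta_U\|$. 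As $\zeta^{(n)}_U\to\zeta_U$ and $E_U^{-1}\zeta^{(n)}_U\to E_U^{-1}\zeta_U$ fibrewise, dominated convergence --- with integrable dominants furnished by the square-integrability of $(\xi_U\ot\overline{\zeta_U})_U$ and by $\int^\oplus\xi_U\ot\overline{\zeta_U}\,d\mu(U)\in\Dom(N)$ --- yields $\int^\oplus\xi_U\ot\overline{\zeta^{(n)}_U}\,d\mu(U)\to\int^\oplus\xi_U\ot\overline{\zeta_U}\,d\mu(U)$ together with convergence of the $N$-images. This gives the core property, and the density of $\Gamma(\dnablanot)$ follows since cores are dense.

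Finally, the transfer identity is consistent with Theorem \ref{ThmPolarDecI}, which exhibits $N$ as the positive part of $\conj_\Q=\Q_L\circ\conj\circ\Q_R^{-1}$: combined with the polar decomposition $\conj=J\NHalf$ recalled earlier and with uniqueness of polar decompositions applied to $\conj_\Q=(\Q_L J\Q_R^{-1})(\Q_R\NHalf\Q_R^{-1})$, it shows that the closure of $\NHalf_0$ agrees with the positive part of $\conj$, which justifies reusing the notation $\NHalf$. This identification is not, however, needed for the stated conclusions, which rest entirely on the core argument above. I expect the only delicate bookkeeping to be the measurability and square-integrability of the truncated fields $\zeta^{(n)}$ and the check that the cut-off vectors stay in $\Dom(N)$, both settled by the displayed norm bounds.
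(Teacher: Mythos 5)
Your argument is correct and follows the paper's overall strategy: both you and the paper identify $\NHalf_0$ with $\Q_R^{-1} N \Q_R$ on $\Gamma(\dnablanot)$, where $N = \int^{\oplus} D_U \otimes \overline{E_U^{-1}}\, d\mu(U)$, by two applications of Lemma \ref{LemCompII} after the substitution $\zeta = E\eta$, and both reduce everything to showing that $\Q_R\Gamma(\dnablanot)$ is a core for $N$. The one place you genuinely diverge is that core argument. The paper applies Lemma \ref{LemTensorCore} not to $N$ but to $N^2 = \int^{\oplus} D_U^2 \otimes \overline{E_U^{-2}}\, d\mu(U)$: the resulting set $C$ of elementary tensors with $(\xi,\zeta)\in\Domt(D^2,\overline{E^{-2}})$ is a core for $N^2$, hence for $N$ (a core for the square of a positive self-adjoint operator is a core for the operator itself), and $C$ is already contained in $\Q_R\Gamma(\dnablanot)$ because $\zeta\in\Dom(E^{-2})$ is built into the definition of $C$ --- so no approximation is needed. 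You instead apply Lemma \ref{LemTensorCore} to $N$ directly and then restore the missing constraint $\zeta\in\Dom(E^{-2})$ by the spectral truncations $P_n=\chi_{[0,n]}(E^{-1})$ and dominated convergence. Your truncation is sound: the fibrewise bounds $\Vert\zeta^{(n)}_U\Vert\le\Vert\zeta_U\Vert$ and $\Vert E_U^{-1}\zeta^{(n)}_U\Vert\le\Vert E_U^{-1}\zeta_U\Vert$ do furnish integrable dominants, $P_{n,U}\to 1$ strongly, and the substitution $\zeta=E\eta$ is indeed a bijection onto $\Dom(E^{-2})$. The price is an extra approximation step plus the measurability bookkeeping for $(\chi_{[0,n]}(E_U^{-1}))_U$ that you flag yourself; the paper's squaring trick buys the same conclusion in one line. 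Your closing remarks on consistency with Theorem \ref{ThmPolarDecI} and the polar decomposition are correct but, as you note, not needed for the stated conclusions.
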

\begin{proof} Let $C := \textrm{span}_\mathbb{C}\left\{ \int^\oplus \xi_U
\otimes \overline{\eta_U} d\mu(U) \mid (\xi, \eta)\in \Domt(D_U^2,
E_U^{-2}) \right\}$. Then $C$ is a core for $\int^{\oplus} D_U
\otimes \overline{E_U^{-1}} d\mu(U)$. Indeed, $C$ is a core for
$\int^{\oplus}  D_U^2 \otimes \overline{E_U^{-2}} d\mu(U)$ by
Lemma \ref{LemTensorCore}, and hence this is a core for
$\int^{\oplus}  D_U \otimes \overline{E_U^{-1}} d\mu(U)$.

Now, let $\eta = \int^\oplus \eta_U d\mu(U)\in H$ and $\xi = \int^\oplus \xi_U d\mu(U)\in H$ be such that
$$\eta \in \Dom(E) \cap \Dom(E^{-1}), \qquad (\xi, E \eta) \in \Domt(D,E^{-1}).$$
\noindent So $\eta \in \Dom(E)$ and $(\xi_U \otimes E_U \eta_U)_U$ is square integrable, so that $\int \el{U}{\xi_U}{\eta_U} d\mu(U) \in \npsi$ by Lemma
\ref{LemCompII}. Similarly, since $E^{-1}\eta \in \Dom(E)$ and  $(D_U \xi_U \otimes \eta_U)_U$ is square integrable, $\int \el{U}{D_U
\xi_U}{E_U^{-1}\eta_U} d\mu(U)) \in \npsi$. Furthermore, we have the following inclusions:
$$C \subseteq \Q_R( \Gamma(\dnablanot)) \subseteq
\Dom(\int^\oplus D_U \otimes \overline{E_U^{-1}}d\mu(U) )$$
and for
$x \in \Gamma(\dnablanot)$ we have,
$$\NHalf_0 (x) = \Q_R^{-1}\left(
\int^{\oplus}  D_U \otimes \overline{E_U^{-1}} d\mu(U)\right)
\Q_R(x) .$$
Since $\Q_R$ is an isometric isomorphism, the claims
 follow from the fact that $\int^{\oplus}  D_U \otimes
\overline{E_U^{-1}} d\mu(U)$ is a self-adjoint, strictly positive
operator for which $C$ is a core.\end{proof}

\begin{Proposition}\label{PropJee}
Let $\djnot$ be the linear space
\[
\begin{split}
& {\rm span}_\mathbb{C} \{ \int_{\ICM}^\oplus
\el{U}{\xi_U}{\eta_U} d\mu(U) \mid {\rm where } \\ & \qquad \qquad \xi \in \Dom(D^{-1}), \eta \in
\Dom(E),  (\xi_U  \otimes \overline{ E_U \eta_U})_U \:\: {\rm is\:\:square\:\: integrable}    \},
\end{split}
\]
and define
$J_0 : \Gamma(\djnot) \rightarrow \hphi:$
\[ \Gamma(\int_{\ICM}  \el{U}{\xi_U}{\eta_U} d\mu(U)) \mapsto  \Lambda(\int_{\ICM} \el{U}{D_U^{-1} \xi_U}{E_U\eta_U} d\mu(U)^\ast).
\]
Then $J_0$ is a densely defined anti-linear isometry, and
its closure, denoted by $J$, is a surjective anti-linear isometry satisfying
$\Q_L \circ J \circ \Q_R^{-1} = \Sigma$.

\end{Proposition}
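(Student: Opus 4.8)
The plan is to prove that on the dense domain $\Gamma(\djnot)$ the operator $J_0$ coincides with the everywhere-defined map $\tilde{J} := \Q_L^{-1} \circ \Sigma \circ \Q_R$, and then to read off every assertion from the facts that $\Q_L, \Q_R$ are isometric isomorphisms and that $\Sigma$ is a surjective anti-linear isometry of $\int^\oplus H_U \otimes \overline{H_U}\, d\mu(U)$ (Remark \ref{RmkNotationDom}). Fix a generator $x = \int_{\ICM} \el{U}{\xi_U}{\eta_U}\, d\mu(U)$ of $\djnot$, with $\xi \in \Dom(D^{-1})$, $\eta \in \Dom(E)$ and $(\xi_U \otimes \overline{E_U \eta_U})_U$ square integrable. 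First I would apply Lemma \ref{LemCompII} with conjugate leg $E\eta$; this is legitimate since $E\eta \in \ran(E) = \Dom(E^{-1})$ with $E^{-1}(E\eta) = \eta$, and it gives $\Q_R(\Gamma(x)) = \int^\oplus \xi_U \otimes \overline{E_U \eta_U}\, d\mu(U)$. Applying $\Sigma$ produces $\int^\oplus E_U \eta_U \otimes \overline{\xi_U}\, d\mu(U)$, whose underlying field is again square integrable because $\Sigma$ preserves the Hilbert--Schmidt norm fibrewise.

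Next I would feed this vector into Lemma \ref{LemCompI}, which applies precisely because its conjugate leg is now $\xi \in \Dom(D^{-1})$, obtaining
$$ \Q_L^{-1}\Bigl(\int^\oplus E_U \eta_U \otimes \overline{\xi_U}\, d\mu(U)\Bigr) = \Lambda\Bigl(\int_{\ICM} \el{U^\ast}{E_U \eta_U}{D_U^{-1}\xi_U}\, d\mu(U)\Bigr). $$
To identify the right-hand side with $J_0(\Gamma(x))$, I would invoke the slice-map identity $\el{U}{a}{b}^\ast = \el{U^\ast}{b}{a}$ (equivalently $\omega_{a,b}^\ast = \omega_{b,a}$) together with the commutation of the adjoint with the $\sigma$-weak integral, so that $\bigl(\int \el{U}{D_U^{-1}\xi_U}{E_U \eta_U}\, d\mu(U)\bigr)^\ast = \int \el{U^\ast}{E_U \eta_U}{D_U^{-1}\xi_U}\, d\mu(U)$; this is exactly the element whose $\Lambda$-image defines $J_0(\Gamma(x))$. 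Hence $J_0(\Gamma(x)) = \tilde{J}(\Gamma(x))$. Since the right-hand side depends only on the vector $\Gamma(x) \in \hpsi$ and not on the representation $(\xi,\eta)$ of $x$, and $\Gamma$ is injective, this single computation simultaneously shows that $J_0$ is well defined and that $J_0 = \tilde{J}|_{\Gamma(\djnot)}$.

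For density of $\Gamma(\djnot)$ I would unwind $\Q_R(\Gamma(\djnot))$: as $\xi$ runs through $\Dom(D^{-1})$ and $\eta$ through $\Dom(E)$, the conjugate leg $E\eta$ runs through all of $\ran(E) = \Dom(E^{-1})$, so $\Q_R(\Gamma(\djnot))$ contains every $\int^\oplus \xi_U \otimes \overline{\zeta_U}\, d\mu(U)$ arising from a pair $(\xi, \overline{\zeta}) \in \Domt(D^{-1}, \overline{E^{-1}})$ (take $\eta = E^{-1}\zeta$). By Lemma \ref{LemTensorCore}, cf.\ Remark \ref{RmkNotationDom}, these latter vectors form a core for $\int^\oplus D_U^{-1} \otimes \overline{E_U^{-1}}\, d\mu(U)$, hence are dense; thus $\Q_R(\Gamma(\djnot))$, and with it $\Gamma(\djnot)$, is dense. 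It then follows that $\tilde{J}$ is an everywhere-defined surjective anti-linear isometry, that $J_0$ is an anti-linear isometry on a dense domain (hence preclosed), and that its closure is $J = \tilde{J}$; the relation $\Q_L \circ J \circ \Q_R^{-1} = \Sigma$ and the surjectivity of $J$ are then immediate.

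The main obstacle will be the bookkeeping in the two slice-map computations: tracking which vector plays the ``first'' and which the ``conjugate'' leg in Lemmas \ref{LemCompI} and \ref{LemCompII}, checking at each stage that the required hypotheses ($E\eta \in \Dom(E^{-1})$, $\xi \in \Dom(D^{-1})$, and square integrability of the relevant fields) hold, and applying the adjoint/leg-swap identity correctly, since it is this identity that turns the $U$-matrix coefficient in the definition of $J_0$ into the $U^\ast$-matrix coefficient produced by $\Q_L^{-1}$.
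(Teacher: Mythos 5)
Your proposal is correct and follows essentially the same route as the paper: both establish $J_0 = \Q_L^{-1}\circ\Sigma\circ\Q_R$ on $\Gamma(\djnot)$ via Lemmas \ref{LemCompI} and \ref{LemCompII} together with the adjoint identity $\bigl((\iota\otimes\omega_{a,b})(U)\bigr)^\ast = (\iota\otimes\omega_{b,a})(U^\ast)$, then deduce density of $\Q_R(\Gamma(\djnot))$ from Lemma \ref{LemTensorCore} (cf.\ Remark \ref{RmkNotationDom}) and read off every claim from $\Sigma$ being a surjective anti-linear isometry and $\Q_L,\Q_R$ being isometric isomorphisms. Your only deviation is to exhibit the dense subset of $\Q_R(\Gamma(\djnot))$ with conjugate leg ranging over $\Dom(E^{-1})$ rather than the paper's $\Domt(D^{-1},\overline{E})$, which is harmless and if anything matches $\Q_R(\Gamma(\djnot))$ more cleanly.
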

\begin{proof} Let $C := \textrm{span}_\mathbb{C}\left\{ \int
\xi_U \otimes \overline{\eta_U} d\mu(U) \mid (\xi, \eta) \in
\Domt(D^{-1}, \overline{E}) \right\}$. $C$ is dense in $\int^\oplus H_U
\otimes \overline{H_U} d\mu(U)$, c.f. Remark \ref{RmkNotationDom}.

For $\eta = \int^\oplus \eta_U d\mu(U)\in H$ and $\xi = \int^\oplus
\xi_U d\mu(U)\in H$ so that $\xi \in \Dom(D^{-1})$, $\eta \in
\Dom(E)$ and $(\xi_U  \otimes \overline{ E_U \eta_U})_U$ is square integrable, we find
 $\int  \el{U}{\xi_U}{\eta_U} d\mu(U) \in \npsi$ and $\int
\el{U}{D_U^{-1} \xi_U}{E_U\eta_U} d\mu(U)^\ast \in \nphi$ by Lemmas \ref{LemCompI} and \ref{LemCompII}. So $C \subseteq \Q_R(\Gamma(\djnot))$, and for $x \in
\Gamma(\djnot)$, $J_0(x) = \Q_L^{-1} \circ \Sigma \circ \Q_R(x).$
Then, since
$\Q_L$ and $\Q_R$ are isomorphisms, the claim follows from
 $\Sigma$ being a surjective anti-linear isometry.\end{proof}

Note that the previous proposition is an analogy of the classical situation. Suppose that $G$ is a locally compact group for which the classical Plancherel theorem \cite[Theorem 18.8.1]{DixC} holds. The anti-linear operator $f \mapsto f^\ast$  acting on $L^2(G)$ is transformed into the anti-linear flip acting on $\int^\oplus K(\zeta) \otimes \overline{K}(\zeta) d\mu(\zeta)$ by the Plancherel transform. Here $f^\ast(x) = \overline{f(x^{-1})} \delta_G(x^{-1})$ and $\delta_G$ is the modular function on $G$.

From Theorem \ref{ThmPolarDecI} and
Propositions \ref{PropNabla} and \ref{PropJee} we obtain the following result.

\begin{Theorem}\label{ThmDecompositionI}
The polar decomposition of $\conj$ is given by $S = J \NHalf$.
\end{Theorem}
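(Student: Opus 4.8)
The plan is to transport the polar decomposition of $\conj_\Q$ obtained in Theorem \ref{ThmPolarDecI} back through the Plancherel isomorphisms to the level of the GNS-spaces $\hpsi$ and $\hphi$. The guiding principle is that polar decompositions are preserved under composition with isometric isomorphisms on either side: if $V_1\colon \hpsi \to K$ and $V_2\colon \hphi \to K$ are isometric isomorphisms and $T := V_2 \circ \conj \circ V_1^{-1}$ has polar decomposition $T = W\,|T|$, then $\conj = \bigl(V_2^{-1} W V_1\bigr)\bigl(V_1^{-1}|T|V_1\bigr)$ is the polar decomposition of $\conj$. Indeed, using $V_i^\ast = V_i^{-1}$ one computes $\conj^\ast \conj = V_1^{-1}\,T^\ast T\,V_1$, and since the Borel functional calculus commutes with conjugation by unitaries this forces $|\conj| = V_1^{-1}|T|V_1$; the partial isometry is then $V_2^{-1}WV_1$.

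First I would recall that, by definition, $\conj_\Q = \Q_L \circ \conj \circ \Q_R^{-1}$, hence $\conj = \Q_L^{-1} \circ \conj_\Q \circ \Q_R$. This is exactly the situation above with the codomain transform $V_2 = \Q_L$ on $\hphi$ and the domain transform $V_1 = \Q_R$ on $\hpsi$. Theorem \ref{ThmPolarDecI} supplies the polar decomposition $\conj_\Q = \Sigma \circ \left(\int^\oplus D_U \otimes \overline{E_U^{-1}}\,d\mu(U)\right)$, with $\Sigma$ the anti-linear isometry and the integral strictly positive self-adjoint. Proposition \ref{PropNabla} identifies $\Q_R^{-1}\circ\left(\int^\oplus D_U \otimes \overline{E_U^{-1}}\,d\mu(U)\right)\circ \Q_R$ with the strictly positive self-adjoint operator $\NHalf$, and Proposition \ref{PropJee} identifies $\Q_L^{-1}\circ \Sigma \circ \Q_R$ with the surjective anti-linear isometry $J$. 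Composing these two identities gives $J\,\NHalf = \Q_L^{-1}\,\Sigma\,\Q_R\,\Q_R^{-1}\left(\int^\oplus D_U \otimes \overline{E_U^{-1}}\,d\mu(U)\right)\Q_R = \Q_L^{-1}\,\conj_\Q\,\Q_R = \conj$, so the factorization $S = J\,\NHalf$ holds.

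It then remains to confirm that this factorization is genuinely the polar decomposition. By Proposition \ref{PropNabla}, $\NHalf$ is strictly positive self-adjoint, so it is injective with dense range, and by the transport principle $\NHalf = \Q_R^{-1}|\conj_\Q|\Q_R = |\conj|$. By Proposition \ref{PropJee}, $J$ is a surjective anti-linear isometry, hence anti-unitary, so it is isometric on all of $\overline{\ran \NHalf} = \hpsi$ and is therefore the (unique) partial isometry occurring in the decomposition. Uniqueness of the polar decomposition then yields that $S = J\,\NHalf$ is it.

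The main point to handle with care is the anti-linearity: one must use the correct notion of adjoint for the anti-linear operator $\conj$ so that $\conj^\ast \conj$ is a genuinely linear positive operator and the identity $|\conj| = \Q_R^{-1}|\conj_\Q|\Q_R$ is legitimate, and one must check that conjugating the anti-linear isometry $\Sigma$ by the linear isomorphisms $\Q_L^{-1}$ and $\Q_R$ again yields an anti-linear isometry, which is precisely the content of Proposition \ref{PropJee}. A secondary bookkeeping point is that the domain and codomain of $\conj$ are transported by \emph{different} transforms, $\Q_R$ on $\hpsi$ and $\Q_L$ on $\hphi$, so these two transforms must be kept distinct throughout; beyond this the argument is a direct assembly of Theorem \ref{ThmPolarDecI} and Propositions \ref{PropNabla} and \ref{PropJee}.
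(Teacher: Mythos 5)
Your argument is correct and is exactly the route the paper takes: the paper derives Theorem \ref{ThmDecompositionI} directly by combining Theorem \ref{ThmPolarDecI} with Propositions \ref{PropNabla} and \ref{PropJee}, i.e.\ by transporting the polar decomposition of $\conj_\Q$ back through the isometric isomorphisms $\Q_L$ and $\Q_R$. Your explicit justification that polar decompositions are preserved under conjugation by (the appropriate, distinct) unitaries on domain and codomain, including the care taken with anti-linearity, is precisely the content the paper leaves implicit.
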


The roles of $\varphi$ and $\psi$ can be interchanged. Consider the
operator:
\begin{equation}\label{EqnConjugationIII}
\conj_0': \hphi \rightarrow \hpsi: \Lambda(x) \mapsto \Gamma(x^\ast), \quad x \in \nphi \cap \npsi^\ast.
\end{equation}
\noindent This operator is densely defined and preclosed. We denote its closure by $\conj'$. The polar decomposition of $\conj'$ can be expressed in terms of corepresentations in a
similar way.

\begin{Theorem}\label{ThmDecompositionII}
Consider $\conj': \hphi \rightarrow \hpsi$. Let
$\djnot'$ be the linear space
\[
\begin{split}
& {\rm span}_\mathbb{C} \{ \int^\oplus_{\ICM}
\el{U}{\xi_U}{\eta_U}^\ast d\mu(U) \mid {\rm where }\\ & \qquad \qquad  \xi \in \Dom(D), \eta \in
\Dom(E^{-1}),  (D_U \xi_U \otimes \overline{ \eta_U} )_U \:\: {\rm is\:\: sq. \:\: int.}  \},
\end{split}
\] 
and define $J_0' : \Lambda(\djnot') \rightarrow \hpsi$: $$\Lambda(\int_{\ICM}  \el{U}{\xi_U}{\eta_U}^\ast d\mu(U)) \mapsto  \Gamma(\int_{\ICM} \el{U}{D_U \xi_U}{E_U^{-1}\eta_U} d\mu(U)).$$
Then $J_0'$ is densely defined and isometric, and
its closure, denoted by $J'$, is a surjective anti-linear isometry. Let
\[
\begin{split}
& \dnablanot' = \textrm{span}_\mathbb{C} \{ \int_{\ICM}
\el{U}{\xi_U}{\eta_U} d\mu(U) \mid {\rm where } \\ & \qquad \qquad \xi \in \Dom(D) \cap
\Dom(D^{-1}), (D \xi, \overline{\eta}) \in \Domt(D^{-1},\overline{E}) \},
\end{split}
\]
 and
define $\NHalfAcc_0 : \Lambda(\dnablanot) \rightarrow \hpsi$: $$\Lambda(\int_{\ICM} \el{U}{\xi_U}{\eta_U} d\mu(U)^\ast) \mapsto  \Lambda(\int_{\ICM} \el{U}{D_U^{-1} \xi_U}{E_U\eta_U} d\mu(U)^\ast).$$
Then $\NHalfAcc_0$ is a densely defined, preclosed
operator and its closure, denoted by $\NHalfAcc$, is a
self-adjoint, strictly positive operator.

Moreover, the polar decomposition of $\conj'$ is given by $\conj'
= J' \NHalfAcc$.
\end{Theorem}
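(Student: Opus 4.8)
The plan is to mirror the proof of Theorem~\ref{ThmDecompositionI}, transporting $\conj'$ to the direct-integral picture through the Plancherel transforms $\Q_L$ and $\Q_R$ and reading off its polar decomposition there. Conceptually $\conj'$ is the inverse of $\conj$: for $x\in\npsi\cap\nphi^\ast$ one has $\conj_0(\Gamma(x))=\Lambda(x^\ast)$ and then $\conj_0'(\Lambda(x^\ast))=\Gamma(x)$, so $\conj_0'\circ\conj_0=\mathrm{id}$ on $\Gamma(\npsi\cap\nphi^\ast)$; passing to closures and using injectivity of $\conj=J\NHalf$ gives $\conj'=\conj^{-1}$. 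This already predicts $J'=J^{-1}=J^\ast$ and $\NHalfAcc=J(\NHalf)^{-1}J^\ast$, and the explicit formulae for $J_0'$ and $\NHalfAcc_0$ in the statement are exactly these operators written on matrix coefficients. I would nonetheless reprove the three ingredients directly, because the explicitly prescribed domains $\djnot'$ and $\dnablanot'$ are handled most transparently that way.

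First, the analogue of Lemma~\ref{LemCompIII}. Put $\conj'_\Q:=\Q_R\circ\conj'\circ\Q_L^{-1}$. For $(\eta,\overline{\xi})\in\Domt(D^{-1},\overline{E})$ write $\Q_L^{-1}(\int^\oplus\xi_U\otimes\overline{\eta_U}\,d\mu)=\Lambda(x)$ with $x=\int(\iota\otimes\omega_{\xi_U,D_U^{-1}\eta_U})(U^\ast)\,d\mu$ by Lemma~\ref{LemCompI}; then $x\in\nphi\cap\npsi^\ast$ (the second membership coming from Lemma~\ref{LemCompII} applied to $x^\ast$, as in the proof of Lemma~\ref{LemCompIII}), $\conj_0'(\Lambda(x))=\Gamma(x^\ast)$, and Lemma~\ref{LemCompII} yields
\[
\conj'_\Q\Bigl(\int^\oplus \xi_U\otimes\overline{\eta_U}\,d\mu(U)\Bigr)=\int^\oplus D_U^{-1}\eta_U\otimes\overline{E_U\xi_U}\,d\mu(U).
\]
Second, the analogue of Theorem~\ref{ThmPolarDecI}: the polar decomposition of $\conj'_\Q$ is the anti-linear flip $\Sigma$ together with the strictly positive self-adjoint operator $\int^\oplus E_U\otimes\overline{D_U^{-1}}\,d\mu(U)$. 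This is checked exactly as in Theorem~\ref{ThmPolarDecI} by computing $(\conj'_\Q)^\ast$ on decomposable fields, then $(\conj'_\Q)^\ast\conj'_\Q=\int^\oplus E_U^2\otimes\overline{D_U^{-2}}\,d\mu(U)$ on the core provided by Lemma~\ref{LemTensorCore}, and finally verifying that $\conj'_\Q$ and $\Sigma\circ\int^\oplus E_U\otimes\overline{D_U^{-1}}\,d\mu(U)$ agree on that core.

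Finally I would transport back, proving the analogues of Propositions~\ref{PropNabla} and~\ref{PropJee}: that $\NHalfAcc_0$ is preclosed with strictly positive self-adjoint closure $\NHalfAcc$ satisfying $\Q_L\circ\NHalfAcc\circ\Q_L^{-1}=\int^\oplus E_U\otimes\overline{D_U^{-1}}\,d\mu(U)$, and that $J_0'$ is a densely defined anti-linear isometry whose closure $J'$ is surjective and satisfies $\Q_R\circ J'\circ\Q_L^{-1}=\Sigma$. As in the unprimed case one checks that the appropriate core $C$ satisfies $C\subseteq\Q_L(\Lambda(\dnablanot'))$ (resp.\ $C\subseteq\Q_L(\Lambda(\djnot'))$), that the elements $\int\el{U}{\xi_U}{\eta_U}\,d\mu(U)$ occurring lie in $\npsi$ and their adjoints in $\nphi$ by Lemmas~\ref{LemCompI} and~\ref{LemCompII}, and that $\NHalfAcc_0=\Q_L^{-1}(\int^\oplus E_U\otimes\overline{D_U^{-1}}\,d\mu)\,\Q_L$ and $J_0'=\Q_R^{-1}\circ\Sigma\circ\Q_L$ on these domains. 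Combining the three ingredients,
\[
\conj'=\Q_R^{-1}\conj'_\Q\,\Q_L=\bigl(\Q_R^{-1}\Sigma\,\Q_L\bigr)\Bigl(\Q_L^{-1}\bigl(\textstyle\int^\oplus E_U\otimes\overline{D_U^{-1}}\,d\mu\bigr)\Q_L\Bigr)=J'\NHalfAcc,
\]
which is the asserted polar decomposition.

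The main obstacle is the domain bookkeeping rather than any new idea. One must verify that the explicitly described spaces $\djnot'$ and $\dnablanot'$, with their conditions mixing $D^{\pm1}$, $E^{\pm1}$ and square-integrability of the tensor fields, are carried by $\Q_L$ onto cores for the relevant direct-integral operators, so that the closures computed on the direct-integral side genuinely coincide with the closures of $J_0'$ and $\NHalfAcc_0$; as before this rests on Lemma~\ref{LemTensorCore}. The only other delicate point is that strict positivity of $D$ and $E$ makes $\int^\oplus E_U\otimes\overline{D_U^{-1}}\,d\mu$ strictly positive, so that $\Sigma$ is its full anti-unitary polar part and the decomposition $\Sigma\circ\int^\oplus E_U\otimes\overline{D_U^{-1}}\,d\mu$ is genuinely the polar decomposition of $\conj'_\Q$.
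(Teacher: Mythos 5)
Your proposal is correct and follows exactly the route the paper intends: the paper gives no separate proof of this theorem but asserts it is obtained ``in a similar way'' to Theorem \ref{ThmDecompositionI}, i.e.\ by transporting $\conj'$ through $\Q_L$ and $\Q_R$, identifying $\Q_R\circ\conj'\circ\Q_L^{-1}$ as $\Sigma\circ\int^\oplus E_U\otimes\overline{D_U^{-1}}\,d\mu(U)$ via the analogues of Lemma \ref{LemCompIII} and Theorem \ref{ThmPolarDecI}, and transporting back as in Propositions \ref{PropNabla} and \ref{PropJee}. Your domain bookkeeping (in particular that $(\eta,\overline{\xi})\in\Domt(D^{-1},\overline{E})$ supplies exactly the hypotheses needed for Lemmas \ref{LemCompI} and \ref{LemCompII}) and the identifications $\Q_R\circ J'\circ\Q_L^{-1}=\Sigma$, $\Q_L\circ\NHalfAcc\circ\Q_L^{-1}=\int^\oplus E_U\otimes\overline{D_U^{-1}}\,d\mu(U)$ are consistent with the stated formulae for $J_0'$ and $\NHalfAcc_0$.
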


We now assume that $(M, \Delta)$ is unimodular, so that $S = S'= S_\psi$ and Theorem \ref{ThmDecompositionI} give an explicit expression for the modular operator and modular conjugation. This leads to the following expression for the modular automorphism group. In this case we write $\sigma_t$ for $\sigma_t^\varphi = \sigma_t^\psi$.

\begin{Theorem}\label{ThmModularExpression}
Suppose that $(M, \Delta)$ is unimodular. Let $(\xi_U)_U, (\eta_U)_U$ be square integrable vector fields. The modular automorphism group $\sigma_t$ of the Haar weight $\psi$ can be expressed as:

\begin{equation}\label{EqnModularExpression}
\sigma_t \left( \int_{\ICM} \el{U}{\xi_U}{\eta_U}d\mu(U) \right) = \int_{\ICM} \el{U}{D_U^{2it}\xi_U}{E_U^{2it}\eta_U}d\mu(U).
\end{equation}
\end{Theorem}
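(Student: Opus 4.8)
The plan is to transport the modular operator $\nabla$ to the Plancherel side by means of $\Q_R$, compute its imaginary powers explicitly there, and then read off the action on matrix coefficients through Lemma \ref{LemCompII}. Since $(M,\Delta)$ is unimodular we have $\conj=\conj_\psi$, so comparing the polar decomposition $\conj=J\NHalf$ of Theorem \ref{ThmDecompositionI} with the Tomita decomposition $\conj_\psi=J_\psi\nabla_\psi^{\frac12}$ and using uniqueness of the polar decomposition identifies the positive part $\NHalf$ with $\nabla_\psi^{\frac12}$; thus $\nabla=\nabla_\psi$ is the modular operator implementing $\sigma_t$ as in \eqref{EqnModularAutomorphism}, and it obeys the standard Tomita--Takesaki relation $\nabla^{it}\Gamma(x)=\Gamma(\sigma_t(x))$ for $x\in\npsi$. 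By Proposition \ref{PropNabla}, $\Q_R\circ\NHalf\circ\Q_R^{-1}=\int^\oplus D_U\otimes\overline{E_U^{-1}}\,d\mu(U)$, so raising to the power $2it$ and using that on the conjugate Hilbert space $(\overline{E_U^{-1}})^{2it}\,\overline{v}=\overline{E_U^{2it}v}$, I obtain
$$\Q_R\,\nabla^{it}\,\Q_R^{-1}=\int^\oplus_{\ICM} D_U^{2it}\otimes\overline{E_U^{2it}}\,d\mu(U).$$

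Next I would prove the formula on a convenient set of fields. Fix $\xi=\int^\oplus\xi_U\,d\mu(U)$ and $\eta=\int^\oplus\eta_U\,d\mu(U)$ with $\eta\in\Dom(E)$, $E\eta\in H$, and $(\xi_U\otimes\overline{E_U\eta_U})_U$ square integrable. Then $x:=\int\el{U}{\xi_U}{\eta_U}d\mu(U)\in\npsi$, and Lemma \ref{LemCompII} gives $\Gamma(x)=\Q_R^{-1}(\int^\oplus\xi_U\otimes\overline{E_U\eta_U}\,d\mu(U))$. Applying $\nabla^{it}$, inserting the transport formula above, and then reading the result back through Lemma \ref{LemCompII} with the fields $D_U^{2it}\xi_U$ and $E_U^{2it}\eta_U$ (noting $E_U^{-1}\bigl(E_U E_U^{2it}\eta_U\bigr)=E_U^{2it}\eta_U$) yields
$$\Gamma(\sigma_t(x))=\nabla^{it}\Gamma(x)=\Gamma\!\left(\int_{\ICM}\el{U}{D_U^{2it}\xi_U}{E_U^{2it}\eta_U}d\mu(U)\right).$$
Since $\Gamma$ is injective, this is exactly \eqref{EqnModularExpression} for such $\xi,\eta$; the unitarity of $D_U^{2it},E_U^{2it}$ guarantees that the new tensor field is again square integrable and that $E^{2it}\eta\in\Dom(E)$, so Lemma \ref{LemCompII} does apply in the backward direction.

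Finally I would remove the domain restrictions by approximation. For arbitrary square integrable fields $(\xi_U)_U,(\eta_U)_U$, set $\xi^{(n)}_U=\mathbf{1}_{\{\|\xi_U\|\le n\}}(U)\,\xi_U$ and $\eta^{(n)}_U=\mathbf{1}_{\{\|\xi_U\|\le n\}}(U)\,\chi_{[1/n,n]}(E_U)\,\eta_U$. These satisfy the hypotheses of the previous step, and $\xi^{(n)}\to\xi$, $\eta^{(n)}\to\eta$ in $H$ by dominated convergence together with the strong convergence $\chi_{[1/n,n]}(E_U)\to 1$. By Remark \ref{RmkIntExists} the map $(\zeta,\theta)\mapsto\int\el{U}{\zeta_U}{\theta_U}d\mu(U)$ is norm-to-$\sigma$-weakly continuous, so $\int\el{U}{\xi^{(n)}_U}{\eta^{(n)}_U}d\mu(U)\to x$ $\sigma$-weakly; applying the normal map $\sigma_t$ and using that $D^{2it},E^{2it}$ are unitary (whence $D^{2it}\xi^{(n)}\to D^{2it}\xi$ and $E^{2it}\eta^{(n)}\to E^{2it}\eta$ in $H$), the right-hand side of \eqref{EqnModularExpression} for $\xi^{(n)},\eta^{(n)}$ converges $\sigma$-weakly to the right-hand side for $\xi,\eta$. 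Passing to the limit establishes \eqref{EqnModularExpression} in general.

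I expect the main obstacle to be this last approximation step: one must choose truncations that simultaneously place $\eta$ in $\Dom(E)$, render the relevant tensor field square integrable, and converge in $H$, while verifying that both sides stay $\sigma$-weakly continuous under the truncation. A secondary point demanding care is the bookkeeping of the conjugate-operator powers, namely the identity $(\overline{E_U^{-1}})^{2it}=\overline{E_U^{2it}}$, which is responsible for the exponents $D_U^{2it},E_U^{2it}$ appearing in \eqref{EqnModularExpression}.
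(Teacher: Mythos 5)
Your proposal is correct and follows essentially the same route as the paper: identify $\nabla$ with $\nabla_\psi$ via unimodularity, transport it through $\Q_R$ using Proposition \ref{PropNabla}, compute the $2it$-powers on the direct integral side (with the same sign bookkeeping $(\overline{E_U^{-1}})^{2it}=\overline{E_U^{2it}}$, which the paper gets from \cite[Theorem 1.10]{Lan}), read the result back through Lemma \ref{LemCompII}, and extend by a $\sigma$-weak approximation argument. The only cosmetic differences are that you invoke $\Gamma(\sigma_t(x))=\nabla^{it}\Gamma(x)$ plus injectivity of $\Gamma$ where the paper uses the equivalent Hilbert algebra homomorphism property \eqref{EqnHilbertProduct}, and that you spell out explicit truncations where the paper merely asserts the existence of approximating sequences.
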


\begin{proof} For $\eta = \int^\oplus \eta_U d\mu(U) \in H$, $\xi =
\int^\oplus \xi_U d\mu(U) \in H$, such that $( \xi_U \otimes \overline{\eta_U})_U$ is a square integrable field of vectors and $\eta \in \Dom(E)$, we find
\begin{equation}\label{EqnModularII}
\nabla^{it} \Gamma\left(\int \el{U}{\xi_U}{\eta_U} d\mu(U)\right) = \Gamma\left(\int \el{U}{D_U^{2it}\xi_U}{E_U^{2it}\eta_U} d\mu(U)\right).
\end{equation}
Indeed, $\left( \int^\oplus (D_U\otimes \overline{E_U^{-1}})d\mu(U)\right)^{2it} (\xi \otimes \overline{\eta}) = \int^\oplus (D_U^{2it}\xi_U\otimes
\overline{E_U^{2it}\eta_U})d\mu(U)$ by \cite[Theorem 1.10]{Lan}, so (\ref{EqnModularII}) follows from Lemma \ref{LemCompII} and Proposition \ref{PropNabla}. Since $\sigma_t(\pipsi(x)) = \nabla^{it} \pipsi(x) \nabla^{-it}, x \in M$, (\ref{EqnHilbertProduct}) implies
\begin{equation}\label{EqnModExpression}
\sigma_t \left(\pipsi \left( \int \el{U}{\xi_U}{\eta_U} d\mu(U) \right)\right)  =  \pipsi \left( \int \el{U}{D_U^{2it}\xi_U}{E_U^{2it}\eta_U} d\mu(U) \right),
\end{equation}
\noindent so the theorem follows from the identification of $M$ with $\pipsi(M)$, in this case.

Now let $\eta = \int^\oplus \eta_U d\mu(U) \in H$ and $\xi =
\int^\oplus \xi_U d\mu(U) \in H$ be arbitrary. We take sequences of square integrable vector fields $\xi_n = \int^\oplus \xi_{U,n} d\mu(U)$,
$\eta_n = \int^\oplus \eta_{U,n} d\mu(U)$ such that $(\xi_{U,n} \otimes \overline{\eta_{U,n}})_U$ is a square integrable field of vectors, $\eta_n \in \Dom(E)$ and such that $\xi_n$ converges to $\xi$ and $\eta_n$  converges to $\eta$. Then $\int \el{U}{\xi_{U,n}}{\eta_{U,n}} d\mu(U)$ is $\sigma$-weakly convergent to $\int \el{U}{\xi_{U}}{\eta_{U}} d\mu(U)$ and hence
\[
\begin{split}
& \sigma_t\left(\int \el{U}{\xi_{U}}{\eta_{U}} d\mu(U) \right) = \lim_{n\rightarrow\infty} \sigma_t\left(\int \el{U}{\xi_{U,n}}{\eta_{U,n}} d\mu(U)
\right) = \\
 & \lim_{n\rightarrow\infty} \left(\int \el{U}{D_U^{2it}\xi_{U,n}}{E_U^{2it}\eta_{U,n}} d\mu(U) \right) 
 =  \int \el{U}{D_U^{2it}\xi_{U}}{E_U^{2it}\eta_{U}} d\mu(U) ,
\end{split}
\]
which yields (\ref{EqnModularExpression}).
\end{proof}

We used (\ref{EqnHilbertProduct}) to obtain (\ref{EqnModExpression}). The unimodularity assumption is essential for Theorem \ref{ThmModularExpression}.

\begin{Corollary}
 Let $(M, \Delta)$ be unimodular. Let $\eta = \int^\oplus
\eta_U d\mu(U) \in H$, $\xi = \int^\oplus \xi_U d\mu(U) \in H$, $r \in \mathbb{R}$ be such that $\eta \in \Dom(E^{2r})$ and $\xi \in \Dom(D^{2r})$, then:
$$\int_{\ICM} \el{U}{\xi_U}{\eta_U}d\mu(U) \in \Dom(\sigma_z),$$
\noindent for all $z$ in the strip $S(r) := \left\{ z \in \mathbb{C} \mid 0 \leq \textrm{Im}(z) \leq r, \textrm{ or } r \leq \textrm{Im}(z) \leq 0 \right\}$. In particular, if $\eta$ is analytic for $E$ and if $\xi$ is analytic for $D$, then $\int_{\ICM} \el{U}{\xi_U}{\eta_U}d\mu(U)$ is analytic for the one-parameter group $\sigma_t$.
\end{Corollary}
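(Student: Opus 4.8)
The plan is to leverage the explicit modular formula already proved. Writing $x=\int_{\ICM}\el{U}{\xi_U}{\eta_U}\,d\mu(U)$, Theorem \ref{ThmModularExpression} gives $\sigma_t(x)=\int_{\ICM}\el{U}{D_U^{2it}\xi_U}{E_U^{2it}\eta_U}\,d\mu(U)$ for real $t$. To prove $x\in\Dom(\sigma_z)$ it suffices to exhibit an $M$-valued function $F$ on the closed strip $S(\mathrm{Im}\,z)$ that is uniformly bounded, $\sigma$-weakly continuous on the closed strip, $\sigma$-weakly holomorphic in its interior, and agrees with $t\mapsto\sigma_t(x)$ on $\mathbb{R}$; then $\sigma_z(x):=F(z)$ is forced by uniqueness of analytic continuation. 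The natural candidate is
\[
F(w)=\int_{\ICM}\el{U}{D_U^{2iw}\xi_U}{E_U^{2i\overline{w}}\eta_U}\,d\mu(U),
\]
where, since $\omega_{\cdot,\cdot}$ is antilinear in its second entry, the second slot is continued with the \emph{conjugate} parameter $\overline{w}$ so that $w\mapsto F(w)$ is holomorphic rather than antiholomorphic. At $w=t\in\mathbb{R}$ this reduces to the formula of Theorem \ref{ThmModularExpression}.

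First I would check that $F(w)$ is a well-defined element of $M$ for each $w\in S(r)$. By \cite[Theorem 1.10]{Lan}, complex powers of the decomposable operators $D=\int^\oplus D_U\,d\mu(U)$ and $E=\int^\oplus E_U\,d\mu(U)$ act fibrewise, and the moduli $|D_U^{2iw}|$, $|E_U^{2i\overline{w}}|$ are powers $D_U^{\mp 2\,\mathrm{Im}\,w}$, $E_U^{\pm 2\,\mathrm{Im}\,w}$. Controlling the two entries therefore amounts exactly to the two hypotheses $\xi\in\Dom(D^{2r})$ and $\eta\in\Dom(E^{2r})$: by interpolation of domains (each hypothesis placing the vector in every intermediate domain across the strip) the fields $(D_U^{2iw}\xi_U)_U$ and $(E_U^{2i\overline{w}}\eta_U)_U$ are defined $\mu$-almost everywhere and lie in $H$. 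Given this, Remark \ref{RmkIntExists} guarantees that the $\sigma$-weak integral defining $F(w)$ converges in $M$, with the bound $\Vert F(w)\Vert\le\Vert (D_U^{2iw}\xi_U)_U\Vert_H\,\Vert (E_U^{2i\overline{w}}\eta_U)_U\Vert_H$.

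Next comes the analyticity. Fixing a normal functional $\alpha\in M_\ast$, one has $\alpha(F(w))=\int_{\ICM}\langle (\alpha\otimes\iota)(U)\,D_U^{2iw}\xi_U,\;E_U^{2i\overline{w}}\eta_U\rangle\,d\mu(U)$. For each $U$ the integrand is entire in $w$, with modulus bounded by $\Vert\alpha\Vert\,\Vert D_U^{2iw}\xi_U\Vert\,\Vert E_U^{2i\overline{w}}\eta_U\Vert$. On any compact substrip, log-convexity of the fibre functions $s\mapsto\Vert D_U^{2is}\xi_U\Vert$ and $s\mapsto\Vert E_U^{2is}\eta_U\Vert$ (the Hadamard three-lines estimate) bounds these uniformly by expressions in $\Vert\xi_U\Vert,\Vert D_U^{2r}\xi_U\Vert$ and $\Vert\eta_U\Vert,\Vert E_U^{2r}\eta_U\Vert$, which are $\mu$-integrable after a Cauchy--Schwarz in $U$. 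Dominated convergence together with Morera's theorem then yields holomorphy of $w\mapsto\alpha(F(w))$ on the open strip and $\sigma$-weak continuity on the closed strip; the same three-lines bound gives uniform boundedness of $F$ in $M$. Hence $F$ is the required bounded, $\sigma$-weakly analytic continuation, so $x\in\Dom(\sigma_z)$ and $\sigma_z(x)=F(z)$ for every $z\in S(r)$. The ``in particular'' statement follows at once: if $\xi$ is analytic for $D$ and $\eta$ for $E$, then $\xi\in\Dom(D^a)$, $\eta\in\Dom(E^a)$ for all real $a$, so the construction runs on every strip and $x$ is entire-analytic for $\sigma$.

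I expect the principal obstacle to be leaving the unitary regime: once $\mathrm{Im}\,w\neq 0$ the operators $D_U^{2iw}$ and $E_U^{2i\overline{w}}$ are unbounded, so the delicate point is to keep the transformed sections square-integrable over $(H_U)_U$ and to guarantee that $F(w)$ remains an element of $M$ rather than merely a vector in $H$. This is precisely where the two domain hypotheses (one per entry) and the fibrewise action of complex powers from \cite[Theorem 1.10]{Lan} are indispensable, and where the interchange of the $\sigma$-weak integral with the complex power of the decomposable operator must be justified; the bookkeeping is streamlined by the diagonalisation of $\nabla^{\frac12}$ in Proposition \ref{PropNabla}, which identifies $\nabla^{iw}$ with the fibrewise complex power and makes the domination above transparent.
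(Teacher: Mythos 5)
Your proposal is correct and takes essentially the same approach as the paper: the paper uses the identical candidate $F(z)=\int_{\ICM}\el{U}{D_U^{2iz}\xi_U}{E_U^{2i\overline{z}}\eta_U}\,d\mu(U)$ (with the conjugate parameter in the antilinear slot), rewrites $\alpha(F(z))$ via \cite[Theorem 1.10]{Lan} as an inner product of complex powers of the decomposable operators $D$ and $E$ applied to $\xi$ and $\eta$, and concludes via \cite[Result 1.2]{KusOneParam}. The only difference is that where you run the three-lines/Morera estimate by hand, the paper simply cites \cite[Lemma VI.2.3]{TakII} for the boundedness, continuity and analyticity of $F_\alpha$ on the strip.
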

\begin{proof}  For $\alpha \in M_\ast$, define
\[
\begin{split}
&F_\alpha (z) = \alpha \left(\int \el{U}{D_U^{2iz} \xi_U}{E_U^{2i\overline{z}}\eta_U}d\mu(U) \right) = \\
&\langle \int^\oplus \!\!\!\! (\alpha \otimes \iota)(U) d\mu(U) (\int^\oplus  \!\!\!\! D_U d\mu(U) )^{2iz} \int^\oplus  \!\!\!\! \xi_U d\mu(U), (\int^\oplus \!\!\!\! E_U d\mu(U) ) ^{2i\overline{z}} \int^\oplus  \!\!\!\!\eta_U d\mu(U)\rangle.
\end{split}
\]
\noindent Here the last equality follows from \cite[Theorem 1.10]{Lan}. By \cite[Lemma VI.2.3]{TakII}, $F_\alpha (z)$ is an analytic continuation of $\alpha\left(\sigma_t^\varphi\left(\int \el{U}{\xi_U}{\eta_U}d\mu(U)\right)\right)$ to the strip $S(r)$ such that $F_\alpha(z)$ is bounded by a constant $C \Vert \alpha \Vert$ where $C$ is independent of $\alpha$. Moreover, $F_\alpha(z)$ is continuous on $S(r)$ and analytic on the interior $S(r)^\circ$. Therefore $F(z) = \int \el{U}{D_U^{2iz} \xi_U}{E_U^{2i\overline{z}}\eta_U}d\mu(U) $ is a continuation of $\sigma_t \left(\int \el{U}{\xi_U}{\eta_U}d\mu(U)\right)$  to the strip $S(r)$ such that $F(z)$ is bounded and $\sigma$-weakly continuous on $S(r)$ and analytic on the interior $S(r)^\circ$ \cite[Result 1.2]{KusOneParam}.
 
\end{proof}
 
\section{Example}\label{SectExample}

Using the theory of square integrable corepresentions, Desmedt
\cite{Des} determined the operators $D_U$ and $E_U$ for the
corepresentions that appear as discrete mass points of the
Plancherel measure, see also Remark \ref{RmkSqIntCoreps}. In
particular, his theory applies to compact quantum groups, for which
every corepresentation is square integrable. As a non-compact
example, Desmedt was able to determine the operators $D_U$ for the
discrete series corepresentations of the quantum group analogue of
the normalizer of $SU(1,1)$ in $SL(2,\mathbb{C})$, which we
denote by $(M, \Delta)$ from now on, see \cite{KoeKus} and \cite{GrKoeKus}. Having the theory of Sections
\ref{SectPlancherel} and \ref{SectModular} at hand we determine
the operators $D_U$ and $E_U$ for the principal series
corepresentations of $(M, \Delta)$.

We refer to \cite{KoeKus} and \cite{GrKoeKus} for the relevant properties of $(M, \Delta)$ and use the
same notational conventions. In \cite[Theorem 5.7]{GrKoeKus} a
decomposition of the multiplicative unitary in terms of
irreducible corepresentations is given:
\begin{equation}\label{EqnWDecomposition}
W = \bigoplus_{p \in q^\mathbb{Z}} \left( \int_{[-1,1]}^\oplus
W_{p,x} dx \oplus \bigoplus_{x\in \sigma_d(\Omega_p)} W_{p,x}
\right).
\end{equation}
\noindent Here $\sigma_d(\Omega_p)$ is the discrete spectrum of the Casimir
operator \cite[Definition 4.5, Theorem 4.6]{GrKoeKus} restricted to the subspace given in
\cite[Theorem 5.7]{GrKoeKus}. $W_{p,x}$ is a corepresention that is a
direct sum of at most 4 irreducible corepresentations
\cite[Propositions 5.3 and 5.4]{GrKoeKus}. An orthonormal basis for the corepresentation
Hilbert space $\mathcal{L}_{p,x}$ of $W_{p,x}$ is given the
vectors $e^{\ep, \eta}_m (p,x), \ep, \eta \in \{ -, + \}, m \in
\mathbb{Z}$. The corepresentations $W_{p,x}, p \in q^\mathbb{Z}, x
\in \sigma(\Omega_p)$ are called the discrete series
corepresentations and the corepresentations $W_{p,x}, p \in
q^\mathbb{Z},  x \in [-1,1]$ are called the principal series
corepresentations. We denote $D_{p,x}$ and $E_{p,x}$ for
$D_{W_{p,x}}$ and $E_{W_{p,x}}$. The operators $D_{p,x}$ have been computed by
Desmedt \cite{Des} for the discrete series. Hence we focus on the
principal series. In Appendix \ref{AppendixB} we
verify that $(M, \Delta)$ satisfies the conditions of the
Plancherel theorem, so that the theory of Sections
\ref{SectPlancherel} and \ref{SectModular} applies.
Furthermore, $(M, \Delta)$ is unimodular \cite{KoeKus}. We denote the modular automorphism group of the Haar weight by $\sigma_t$.

By \cite[Lemmas 10.9]{GrKoeKus} the action of the matrix elements in the GNS-space can be calculated explicitly:
\[
\begin{split}
&(\iota \otimes \omega_{e^{\ep,\eta}_m, e^{\ep',\eta'}_{m'}})\left(W_{p,x} \right) f_{m_0, p_0, t_0}  = \\
& C(\eta\ep x;m',\ep',\eta';\ep\ep'\vert p_0\vert
p^{-1}q^{-m-m'},p_0,m-m') \delta_{sgn(p_0),\eta\eta'} f_{m_0 - m +
m',\ep\ep' \vert p_0\vert p^{-1} q^{-m-m'} , t_0}.
\end{split}
\]
Fix $p \in q^\mathbb{Z}$. Let $\ep, \eta, m, \ep', \eta', m'$ be
$\mu$-measurable functions of $x \in [-1,1]$, thus  $\ep = \ep(x),
\eta = \eta(x), \ldots$. Let $f, f'$ be $\mu$-square integrable complex functions on $[-1,1]$. Then $f(x) e^{\ep, \eta}_{m} = f(x) e^{\ep,
\eta}_{m}(p,x)$ and $ f'(x) e^{\ep', \eta'}_{m'} = f'(x) e^{\ep',
\eta'}_{m'}(p,x)$ are $\mu$-square integrable fields of vectors.
Since the modular automorphism group $\sigma_t$ is implemented by
$\gamma^\ast \gamma$ \cite[Section 4]{KoeKus}, Theorem
\ref{ThmModularExpression} yields
\begin{equation}\label{EqnDubbelModular}
\begin{split}
& \left( \int_{[-1,1]} (\iota \otimes \omega_{f(x)D_{p,x}^{2it} e^{\ep,\eta}_m, f'(x)E_{p,x}^{2it}e^{\ep',\eta'}_{m'}})\left(W_{p,x} \right) d\mu(x) \right) f_{m_0, p_0, t_0}  \\
 = & \sigma_t\left(  \int_{[-1,1]} (\iota \otimes \omega_{f(x)e^{\ep,\eta}_m,
f'(x)e^{\ep',\eta'}_{m'}})\left(W_{p,x} \right) d\mu(x) \right) f_{m_0,
p_0, t_0 } \\
 = &\vert \gamma \vert^{2it} \left( \int_{[-1,1]} (\iota \otimes \omega_{f(x)e^{\ep,\eta}_m, f'(x)e^{\ep',\eta'}_{m'}})\left(W_{p,x} \right) d\mu(x) \right) \vert \gamma \vert^{-2it} f_{m_0, p_0, t_0}  \\
= & \left(\frac{p_0^2}{p_0^2p^{-2}q^{-2m-2m'}}\right)^{it} \int_{[-1,1]}
 \!\!\!\! \!\!\!\! f(x)\overline{f'(x)}C(\eta\ep x;m',\ep',\eta';\ep\ep'\vert p_0\vert p^{-1}q^{-m-m'},p_0,m-m') \\
&\qquad\qquad\times \delta_{sgn(p_0),\eta\eta'} f_{m_0 - m + m',\ep\ep' \vert p_0\vert p^{-1} q^{-m-m'} , t_0} d\mu(x) \\
= & (p^{2}q^{2m+2m'})^{it} \int_{[-1,1]} (\iota \otimes
\omega_{f(x)e^{\ep,\eta}_m, f'(x)e^{\ep',\eta'}_{m'}})\left(W_{p,x} \right)
d\mu(x)  f_{m_0, p_0, t_0}.
\end{split}
\end{equation}
Define $A$ and $B$ as the unbounded self-adjoint operators on
$\int^\oplus_{[-1,1]} \mathcal{L}_{p,x} d\mu(x)$ determined by $A
= \int^\oplus_{[-1,1]} A_{p,x} d\mu(x)$, $A_{p,x} e_m^{\ep,
\eta}(p,x) = p^2q^{2m} e_m^{\ep, \eta}(p,x)$. $B =
\int^\oplus_{[-1,1]} B_{p,x} d\mu(x)$, $B_{p,x} e_m^{\ep,
\eta}(p,x) = q^{-2m} e_m^{\ep, \eta}(p,x)$.
So (\ref{EqnDubbelModular}) yields
\begin{equation}\label{EqnVoorbeeldGelijk}
\begin{split}
 & \int (\iota \otimes \omega_{f(x)D_{p,x}^{2it} e^{\ep,\eta}_m,
f'(x)E_{p,x}^{2it}e^{\ep',\eta'}_{m'}})\left(W_{p,x} \right) d\mu(x)
 \\ = & \int (\iota \otimes \omega_{f(x)A_{p,x}^{it}e^{\ep,\eta}_m,
f'(x)B_{p,x}^{it}e^{\ep',\eta'}_{m'}})\left(W_{p,x} \right) d\mu(x),
\end{split}
\end{equation}
where the integrals are taken over $[-1,1]$. For any two bounded operators $F = \int^\oplus_{[-1,1]} F_{p,x} d\mu(x)$, $G
= \int^\oplus_{[-1,1]} G_{p,x} d\mu(x)$ on $\int^\oplus_{[-1,1]}
\mathcal{L}_{p,x} d\mu(x)$, the map $\left( \int^\oplus_{[-1,1]}
\mathcal{L}_{p,x} d\mu(x)\right) \otimes
\overline{\left(\int^\oplus_{[-1,1]} \mathcal{L}_{p,x}
d\mu(x)\right)} \rightarrow M$ given by
\[
 v \otimes \overline{w} = \int^\oplus_{[-1,1]}
v_x d\mu(x) \otimes \overline{\int^\oplus_{[-1,1]} w_x d\mu(x)}
\mapsto \int_{[-1,1]} (\iota \otimes \omega_{F_{p,x} v_x, G_{p,x}
w_x})\left(W_{p,x} \right) d\mu(x)
\]
is norm-$\sigma$-weakly continuous since 
\[
\vert \int_{[-1,1]}
\alpha \otimes \omega_{v_x, w_x}(W_{p,x}) d\mu(x) \vert \leq \Vert
\alpha \Vert \Vert F \Vert \Vert G \Vert \Vert v \Vert \Vert w
\Vert, \alpha \in M_\ast.
\]
 Therefore, for $v =
\int^\oplus_{[-1,1]} v_x d\mu(x),  w = \int^\oplus_{[-1,1]} w_x
d\mu(x) \in \int^\oplus_{[-1,1]} \mathcal{L}_{p,x} d\mu(x)$, using \cite[II.1.6, Proposition 7]{Dix} and (\ref{EqnVoorbeeldGelijk}),
\begin{equation}\label{EqnEqualMatrixElements}
 \int_{[-1,1]} (\iota \otimes \omega_{D_{p,x}^{2it} v_x,
E_{p,x}^{2it}w_x })\left(W_{p,x} \right) d\mu(x)
  = \int_{[-1,1]} (\iota \otimes \omega_{A_{p,x}^{it}v_x,
B_{p,x}^{it} w_x})\left(W_{p,x} \right) d\mu(x).
\end{equation}
For $v = \int^\oplus_{[-1,1]} v_x d\mu(x), w =
\int^\oplus_{[-1,1]} w_x d\mu(x) \in \int^\oplus_{[-1,1]}
\mathcal{L}_{p,x} d\mu(x)$, with $(v_x)_x $ essentially bounded,
 $w \in \Dom\left(
\int^\oplus_{[-1,1]} E_{p,x} d\mu(x) \right)$, Theorem
\ref{ThmOrthogonalitRel} implies that $ \int_{[-1,1]} (\iota \otimes
\omega_{D_{p,x}^{2it} v_x, E_{p,x}^{2it}w_x })\left(W_{p,x}
\right) d\mu(x) \in \npsi. $ By (\ref{EqnEqualMatrixElements}) and Theorem
\ref{ThmDomain}, $B_{p,x}^{it} w_x \in \Dom(E_{p,x})$ almost
everywhere in the support of $(v_x)_x$. Theorem \ref{ThmOrthogonalitRel}
implies that for $v' = \int^\oplus_{[-1,1]} v_x' d\mu(x), w' =
\int^\oplus_{[-1,1]} w_x' d\mu(x)  \in \int^\oplus_{[-1,1]}
\mathcal{L}_{p,x} d\mu(x)$ with the extra assumptions $w' \in \Dom\left(
\int^\oplus_{[-1,1]} E_{p,x}^2 d\mu(x) \right)$ and $(v'_x \otimes
E_{p,x} w'_x)_x$ is square integrable,
\[
\begin{split}
&\int_{[-1,1]} \langle  B^{it}_{p,x} w_x, E_{p,x}^2 w_x' \rangle \langle v'_x,
A^{it}_{p,x} v_x \rangle d\mu(x) \\=&
\psi\left(\left( \int_{[-1,1]}   (\iota \otimes \omega_{A^{it}_{p,x} v_x, B^{it}_{p,x} w_x})(W_{p,x}) d\mu(x)\right) ^\ast\int_{[-1,1]}   (\iota \otimes \omega_{v_x', w_x'})(W_{p,x}) d\mu(x) \right) \\
=&  \psi\left(\left( \int_{[-1,1]}   (\iota \otimes \omega_{D_{p,x}^{2it} v_x,  E_{p,x}^{2it}w_x})(W_{p,x}) d\mu(x)\right) ^\ast\int_{[-1,1]}   (\iota \otimes \omega_{v_x', w_x'})(W_{p,x}) d\mu(x) \right)\\
=&  \int_{[-1,1]}  \langle  E_{p,x}^{2it} w_x, E_{p,x}^2  w_x' \rangle
\langle v_x', D_{p,x}^{2it} v_x \rangle d\mu(x).
\end{split}
\]
$E_{p,x}$ is strictly positive by the Plancherel theorem. The
elements $\int^\oplus_{[-1,1]} v'_x \otimes \overline{ E_{p,x}^2 w'_x} 
d\mu(x)$ are dense in $ \int^\oplus_{[-1,1]} \mathcal{L}_{p,x}
d\mu(x) \otimes \overline{ \int^\oplus_{[-1,1]} \mathcal{L}_{p,x}
d\mu(x)}$, so $ \int^\oplus_{[-1,1]} D_{p,x}^{2it} \otimes
\overline{E_{p,x}^{2it}} d\mu(x) = \int^\oplus_{[-1,1]}
A_{p,x}^{it} \otimes \overline{B_{p,x}^{it}} d\mu(x)$. By Stone's
theorem and \cite[Theorem 1.10]{Lan} $ \int^\oplus_{[-1,1]}
D_{p,x} \otimes \overline{E_{p,x}} d\mu(x) = \int^\oplus_{[-1,1]}
A_{p,x}^{\frac{1}{2}} \otimes \overline{B_{p,x}}^{\frac{1}{2}} d\mu(x)$. Hence we see that there is
a positive function $c(p,x)$, such that
\[
\begin{split}
D_{p,x} e^{\ep, \eta}_m &= p q^m c(p,x) e^{\ep, \eta}_m,\\
E_{p,x} e^{\ep, \eta}_m &= q^{-m} c(p,x) e^{\ep, \eta}_m.
\end{split}
\]
The function $c(p,x)$ depends on the choice of the Plancherel
measure $\mu$, see \cite[Theorem 3.4.1, part 6]{Des}. 
\begin{Remark}
Desmedt \cite[\S 3.5]{Des} obtains a similar result using summation formulas for basic hypergeometric series, a method different from the one presented here. Note that the present method also applies to discrete series corepresentations and avoids caclulations involving special functions.
\end{Remark}

\appendix

\section{Appendix}\label{AppendixA}

 For the theory of direct integrals of bounded operators we refer to \cite{Dix}. For the theory of direct integrals of unbounded closed operators we refer to \cite{Lan}, \cite{Nus} and \cite[Chapter 12]{Schm}.
\begin{Lemma}\label{LemTensorCore}
Let $(X, \mu)$ be a standard measure space. Let $(H_p)_p$ and
$(K_p)_p$ be measurable fields of Hilbert spaces. Let $(A_p)_p$
and $(B_p)_p$ be measurable fields of closed operators on
$(H_p)_p$ and $(K_p)_p$ respectively. Let $(e^n_p)_p, n\in
\mathbb{N}$ be a fundamental sequence for $(A_p)_p$ and let
$(f^n_p)_p, n\in \mathbb{N}$ be a fundamental sequence for
$(B_p)_p$. Set $A = \int^\oplus_X A_p d\mu(p)$, $B = \int^\oplus_X
B_p d\mu(p)$, $H = \int^\oplus_X H_p d\mu(p)$ and $K =
\int^\oplus_X K_p d\mu(p)$.
\begin{enumerate}
\item[(a)] $(A_p \otimes B_p)_p$ is a measurable field of closed operators.
\item[(b)] The countable set
$$R = \left\{ (e^n_p \otimes f^m_p)_p
\mid n,m \in \mathbb{N} \right\},$$ is a fundamental sequence for
$(A_p \otimes B_p)_p$.
\item[(c)] The set
$$T = {\rm span}_{\mathbb{C}}\left\{ \int^\oplus_X \!\!\!\! \xi_p \otimes \eta_p d\mu(p)  \mid \!\!\!\! \begin{array}{l} \xi = \int^\oplus_X \xi_p d\mu(p) \in \Dom(A), \\ \eta = \int^\oplus_X \eta_p d\mu(p) \in \Dom(B),\\ \int^\oplus_X(\xi_p\otimes \eta_p) d\mu(p) \in \Dom(\int ^\oplus(A_p
\otimes B_p)d\mu(p)) \end{array} \right\},$$
is a core for $\int^\oplus_X (A_p \otimes B_p) d\mu(p)$.
\end{enumerate}
\end{Lemma}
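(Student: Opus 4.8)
The plan is to prove (a) and (b) simultaneously by verifying that $R$ is a fundamental sequence for the field $(A_p \otimes B_p)_p$, and then to deduce (c) from the direct-integral theory recalled at the start of this appendix. First I would record the pointwise fact that for closed operators $A_p$ on $H_p$ and $B_p$ on $K_p$ the algebraic tensor product $A_p \odot B_p$, defined on $\Dom(A_p) \odot \Dom(B_p)$, is closable, and that its closure $A_p \otimes B_p$ is by definition a closed operator having $\Dom(A_p) \odot \Dom(B_p)$ as a core; this is standard and will be used as a black box. In particular $e^n_p \otimes f^m_p \in \Dom(A_p \otimes B_p)$ and $(A_p \otimes B_p)(e^n_p \otimes f^m_p) = A_p e^n_p \otimes B_p f^m_p$. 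The measurability axioms of a fundamental sequence are then immediate: the fields $(e^n_p \otimes f^m_p)_p$ and $(A_p e^n_p \otimes B_p f^m_p)_p$ are measurable because a tensor product of measurable vector fields is measurable and $(A_p e^n_p)_p$, $(B_p f^m_p)_p$ are measurable by hypothesis. Since measurability of the field $(A_p \otimes B_p)_p$ means precisely the existence of such a fundamental sequence, (a) will follow from (b).

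The substantive point in (b) is the remaining axiom, that for almost every $p$ the span of $\{e^n_p \otimes f^m_p\}_{n,m}$ is a core for $A_p \otimes B_p$. I would isolate this as a fiberwise lemma: if $E \subseteq \Dom(A)$ is a core for a closed operator $A$ and $F \subseteq \Dom(B)$ is a core for a closed operator $B$, then $E \odot F$ is a core for $A \otimes B$. The proof is a diagonal approximation. Given an elementary tensor $\xi \otimes \eta$ with $\xi \in \Dom(A)$, $\eta \in \Dom(B)$, pick $\xi_k \in E$ with $\xi_k \to \xi$ and $A\xi_k \to A\xi$, and $\eta_l \in F$ with $\eta_l \to \eta$ and $B\eta_l \to B\eta$; the estimate $\|A\xi_k \otimes B\eta_l - A\xi \otimes B\eta\| \le \|A\xi_k - A\xi\|\,\|B\eta_l\| + \|A\xi\|\,\|B\eta_l - B\eta\|$ together with the boundedness of convergent sequences shows that along a suitable diagonal $\xi_k \otimes \eta_l \to \xi \otimes \eta$ and $A\xi_k \otimes B\eta_l \to A\xi \otimes B\eta$. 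Thus every elementary tensor of $\Dom(A) \odot \Dom(B)$ lies in the graph-closure of the restriction of $A \otimes B$ to $E \odot F$; since $\Dom(A) \odot \Dom(B)$ is a core, so is $E \odot F$. Taking $E = \spanc\{e^n_p\}$ and $F = \spanc\{f^m_p\}$ yields the missing axiom and finishes (a) and (b).

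For (c) I would use the general structure of a direct integral of a measurable field of closed operators: once $R$ is available as a fundamental sequence, $\int^\oplus (A_p \otimes B_p)\,d\mu(p)$ is closed, and a canonical core is given by the finite sums $\sum_{n,m} \int^\oplus \phi_{n,m}(p)\,(e^n_p \otimes f^m_p)\,d\mu(p)$, where each $\phi_{n,m}$ is a bounded measurable function supported on a finite-measure set on which $\|e^n_p\|$, $\|A_p e^n_p\|$, $\|f^m_p\|$ and $\|B_p f^m_p\|$ are all essentially bounded (such sets exhaust $X$ because these are measurable functions on a standard measure space). The task is to check that each summand already lies in $T$. Writing it as $\int^\oplus \xi_p \otimes \eta_p\,d\mu(p)$ with $\xi_p = \phi_{n,m}(p) e^n_p$ and $\eta_p = \chi_Y(p) f^m_p$ for a finite-measure set $Y$ containing the support of $\phi_{n,m}$, the support and boundedness conditions force $\xi = \int^\oplus \xi_p\,d\mu(p) \in \Dom(A)$ and $\eta = \int^\oplus \eta_p\,d\mu(p) \in \Dom(B)$ to be square-integrable, and the tensor lies in $\Dom(\int^\oplus A_p \otimes B_p\,d\mu)$ because it is one of the canonical core vectors. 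Hence this core is contained in $T$, and since $T \subseteq \Dom(\int^\oplus A_p \otimes B_p\,d\mu)$ by its very definition, $T$ is a core.

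The main obstacle I anticipate is not the fiberwise diagonal argument, which is routine, but the bookkeeping in (c). Producing the canonical core explicitly from the fundamental sequence relies on the measurable approximation and selection machinery for direct integrals of unbounded operators, and some care is needed when distributing cut-off functions between the two tensor factors so that the resulting decomposable vectors are genuine square-integrable elements of $T$ rather than mere measurable fields, while keeping their images under $\int^\oplus A_p \otimes B_p\,d\mu$ square-integrable as well.
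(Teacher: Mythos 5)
Your proposal is correct and follows essentially the same route as the paper: verify the measurability axioms and the fiberwise core property to establish (a) and (b), then obtain (c) by cutting off the fundamental sequence with bounded measurable functions supported on finite-measure sets where the relevant norms are essentially bounded, so that the resulting total set of graph-norm generators lands inside $T$. The only cosmetic difference is that you prove the fiberwise statement (a core for $A_p$ tensored with a core for $B_p$ gives a core for $A_p\otimes B_p$) by a direct diagonal argument, where the paper cites Kadison--Ringrose, and the paper implements your cut-off bookkeeping via an explicit normalizing function $\lambda^{n,m}_p$ together with Dixmier's totality criterion.
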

\begin{proof} We first prove (a) and (b).  By \cite[II.1.8,
Proposition 10]{Dix}, for $(\xi_p)_p$, $(\eta_p)_p$ measurable
fields of vectors, there is a unique measurable structure so that
$(\xi_p \otimes \eta_p)_p$ is a measurable field of vectors. We
check
(1) - (3) of \cite[Remark 1.5, (1) - (3)]{Lan}.\\
 (1) $(e_p^n \otimes f_p^m)_p$ is
a $\mu$-measurable field of vectors and $e^n_p \otimes f_p^m \in
\Dom(A_p \otimes B_p)$ for all $p$. The function
$$p \mapsto \langle (A_p \otimes B_p) (e_p^n \otimes f_p^m)_p, (e_p^{n'} \otimes f_p^{m'})_p\rangle = \langle A_p e_p^n, e_p^{n'}\rangle \langle B_p f_p^m, f_p^{m'}\rangle,$$
is $\mu$-measurable, so (2) follows. For (3) fix a $p \in X$.
By definition $\{ e_p^n \mid n\in \mathbb{N}  \}$ is a core for
$A_p$ and $\{ f_p^n \mid n\in \mathbb{N}  \}$ is a core for $B_p$.
Then it follows from \cite[Lemma 11.2.29]{Kad} that
$\textrm{span}_{\mathbb{C}}\left\{ e^n_p \otimes f^m_p \mid n,m
\in \mathbb{N} \right\}$ is a core for $A_p \otimes B_p$, so that
$R$ is total in $\Dom(A_p \otimes B_p)$ with respect to the graph
norm. In all, we have proved (a) and (b).

Using \cite[II.1.3, Remarque 1]{Dix}, we may assume that
$(e^n_p)_p$ (resp. $(f^n_p)_p$) satisfies $p \mapsto
\Vert(e^n_p)_p\Vert$ (resp. $p \mapsto \Vert(f^n_p)_p\Vert$) is
bounded and vanishes outside a set of finite measure. Let
$$\lambda^{n,m}_p = \left( \textrm{max}(1, \Vert (A_p \otimes B_p) (e_p^n \otimes f_p^m) \Vert, \Vert A_p e_p^n \Vert, \Vert, \Vert B_p f_p^m \Vert)\right)^{-1},$$
so $\lambda^{n,m}_p$ is measurable and $0 < \lambda^{n,m}_p \leq 1
$. Using the assumption $\lambda^{n,m}_p (e_p^n \otimes f_p^m)\in
T$. Moreover, $p \mapsto \Vert \lambda_p^{n,m} (e_p^n \otimes
f_p^m)\Vert_{\textrm{Graph}(A_p \otimes B_p)}^2$ is bounded. Let
$S = \{ (\lambda_p^{n,m} (e_p^n \otimes f_p^m))_p \mid n,m \in
\mathbb{N} \} \subseteq T$. Now define
 $$M =  \bigcup_{f \in \mathcal{C}} m_f S, $$
where $\mathcal{C}$ is the set of bounded measurable scalar-valued
functions vanishing outside a set of finite measure and $m_f$ is
multiplication by $f$. Then $M \subseteq T \subseteq
\Dom(\int^{\oplus}_X (A_p \otimes B_p) d\mu(p))$ and by
\cite[II.1.6, Proposition 7]{Dix}, $M$ is total in
$\Dom(\int^{\oplus}_X (A_p \otimes B_p) d\mu(p))$ equipped with the
graph norm. Hence $T$ is a core for $\int^{\oplus}_X (A_p \otimes
B_p) d\mu(p)$. \end{proof}

\section{Appendix}\label{AppendixB}
 
$(M,\Delta)$ denotes the quantum group analogue of the normalizer
of $SU(1,1)$ in $SL(2, \mathbb{C})$. We use the same notation as
in \cite{KoeKus} and \cite{GrKoeKus}. The Casimir operator
$\Omega$ is defined in \cite[Definition 4.5]{GrKoeKus}. $\sigma(\Omega)$ and
$\sigma_d(\Omega)$ denote the spectrum and the discrete spectrum
of $\Omega$ respectively.

\begin{Proposition}\label{PropDiscContCoreps}
Let $x \in [-1,1]$ and $x' \in \sigma_d(\Omega)$, so in particular $x \not= x'$. Then the irreducible summands of $W_{p,x}$ are all inequivalent from $W_{p,x'}$.
\end{Proposition}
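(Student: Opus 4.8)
The plan is to separate the two families of corepresentations by means of the Casimir operator $\Omega$, exactly as in the classical theory of $SU(1,1)$, where the central character of the Casimir element distinguishes the principal series from the discrete series. Concretely, I would exhibit the spectral value of $\Omega$ as an invariant of the equivalence class of an irreducible corepresentation and then observe that this invariant equals $x$ on every irreducible summand of $W_{p,x}$ and $x'$ on every irreducible summand of $W_{p,x'}$. Since $x \in [-1,1]$ lies in the continuous spectrum while $x' \in \sigma_d(\Omega)$ lies in the discrete spectrum, one has $x \neq x'$, so the corepresentations cannot share an irreducible summand.

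To carry this out, I would first recall from \cite[Definition 4.5, Theorem 4.6]{GrKoeKus} that $\Omega$ is a self-adjoint operator commuting with the corepresentation $W$, and that the fields $W_{p,x}$ arise precisely from its spectral decomposition in \cite[Theorem 5.7]{GrKoeKus}; in particular $\Omega$ acts on the corepresentation Hilbert space $\mathcal{L}_{p,x}$ of $W_{p,x}$ as the scalar $x\,\Id$. Restricting to any irreducible summand $V \subseteq \mathcal{L}_{p,x}$ of $W_{p,x}$ and any irreducible summand $V' \subseteq \mathcal{L}_{p,x'}$ of $W_{p,x'}$, Schur's lemma gives $\Omega|_{V} = x\,\Id_V$ and $\Omega|_{V'} = x'\,\Id_{V'}$. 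Now suppose $V$ and $V'$ were equivalent via a unitary $\Upsilon\colon V \to V'$ implementing the equivalence of the corresponding subcorepresentations. Since $\Omega$ is intrinsic to the corepresentation, $\Upsilon$ must conjugate $\Omega|_V$ into $\Omega|_{V'}$, i.e. $x\,\Upsilon = x'\,\Upsilon$, whence $x = x'$, contradicting the hypothesis. Therefore no irreducible summand of $W_{p,x}$ is equivalent to any irreducible summand of $W_{p,x'}$, which is the assertion.

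The delicate point is the claim that an abstract equivalence of corepresentations respects $\Omega$. In the classical case this is automatic, since the Casimir is a fixed central element of the enveloping algebra and its image transforms naturally under intertwiners; here one must verify that $\Omega$ is likewise determined by the quantum group structure rather than by the particular realisation on $\mathcal{L}_{p,x}$. I expect this to follow from the construction of $\Omega$ in \cite{GrKoeKus} together with the fact that it commutes with the corepresentation, so that Schur's lemma makes its scalar value depend only on the equivalence class. Should establishing this naturality prove awkward, an alternative route is to invoke the explicit classification of the irreducible summands of $W_{p,x}$ in \cite[Propositions 5.3 and 5.4]{GrKoeKus} and read off the inequivalence directly from the spectral data, which is again governed by $x$ and hence separates the value $x \in [-1,1]$ from $x' \in \sigma_d(\Omega)$.
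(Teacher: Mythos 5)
Your proposal takes essentially the same route as the paper: the proof in the text is a one-line appeal to \cite{GrKoeKus}, separating the two families by the spectrum of the Casimir operator $\Omega$. One detail to adjust: $\Omega$ does not act as the single scalar $x\,\Id$ on all of $\mathcal{L}_{p,x}$ --- as one sees from the spectral decomposition used in Appendix \ref{AppendixB}, the eigenvalue on the $(\varepsilon,\eta)$-component is $\varepsilon\eta x$, so the irreducible summands of $W_{p,x}$ carry eigenvalues $\pm x$ and your contradiction should read $\pm x = x'$ rather than $x = x'$. The clean separating statement, which is the one the paper actually invokes, is that the eigenvalues of $\Omega$ on $W_{p,x}$ all lie in $[-1,1]$ while those on $W_{p,x'}$ with $x' \in \sigma_d(\Omega)$ lie in $\mathbb{R}\setminus[-1,1]$; with that formulation your Schur-lemma argument goes through unchanged.
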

\begin{proof} 
This follows from \cite{GrKoeKus}, since the eigenvalues of $\Omega$ when restricted to $W_{p,x} '$ are contained in $\mathbb{R} \backslash [-1,1]$, whereas for $W_{p,x}$ the eigenvalues of $\Omega$ are in $[-1,1]$.

\end{proof}

The next propositions show that $(M, \Delta)$ satisfies the
conditions of the Plancherel theorem, cf. Remark \ref{RmkReducedSetting}.

\begin{Proposition}
 $\hat{M}$ is a type I von Neumann algebra.
\end{Proposition}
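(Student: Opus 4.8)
The plan is to extract the type I property directly from the decomposition \eqref{EqnWDecomposition} of the multiplicative unitary. Recall that $\hat M$ is the von Neumann algebra generated by the slices $(\omega\otimes\iota)(W)$, $\omega\in M_\ast$, acting on the second leg $\hphi$. By the correspondence recorded in Remark \ref{RmkReducedSetting}, namely $\pi_\sigma\bigl((\omega\otimes\iota)(W)\bigr)=(\omega\otimes\iota)(U_\sigma)$, the decomposition \eqref{EqnWDecomposition} of $W$ into the corepresentations $W_{p,x}$ is simultaneously a direct integral decomposition of the defining representation of $\hat M$: the space $\hphi$ splits into the fields $\mathcal L_{p,x}$ over $p\in q^{\mathbb Z}$ and $x$ (continuous part on $[-1,1]$ and discrete part over $\sigma_d(\Omega_p)$), and every element of $\hat M$ becomes a decomposable operator whose fibre over $(p,x)$ is the corresponding slice of $W_{p,x}$.

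First I would analyse a single fibre. By \cite[Propositions 5.3 and 5.4]{GrKoeKus} each $W_{p,x}$ is a direct sum of at most four irreducible corepresentations. For an irreducible unitary corepresentation $U$ on $H_U$ the von Neumann algebra $\hat M_U:=\{(\omega\otimes\iota)(U)\mid\omega\in M_\ast\}''$ equals $B(H_U)$: for a unitary corepresentation the closed invariant subspaces are reducing and correspond exactly to the projections of the commutant $\hat M_U'$, so irreducibility forces $\hat M_U'=\mathbb C$, and the bicommutant theorem gives $\hat M_U=B(H_U)$. Hence each fibre algebra $\hat M_{p,x}:=\{(\omega\otimes\iota)(W_{p,x})\mid\omega\in M_\ast\}''$ is a finite direct sum of type I factors, and in particular is type I.

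It then remains to assemble the fibres into $\hat M$. Proposition \ref{PropDiscContCoreps}, together with the inequivalence of the irreducible constituents of $W_{p,x}$ for distinct parameters (read off from the Casimir eigenvalues in \cite{GrKoeKus}), shows that the corepresentations appearing in \eqref{EqnWDecomposition} are mutually disjoint; consequently the diagonalisable operators over the parameter space lie in the centre of $\hat M$ and \eqref{EqnWDecomposition} exhibits $\hat M$ as a genuine direct integral $\int^\oplus \hat M_{p,x}\,d\mu(p,x)$ of the fibre algebras. Since a direct integral over a standard measure space of type I von Neumann algebras is again type I \cite{Dix}, the proposition follows. The step I expect to be the main obstacle is precisely this last identification: one must check that forming the generated von Neumann algebra commutes with the direct integration, i.e.\ that $\hat M$ equals $\int^\oplus\hat M_{p,x}\,d\mu(p,x)$ rather than being merely contained in it. This requires the measurability of the field $(\hat M_{p,x})_{p,x}$ and the disjointness of the summands, both of which should be supplied by the measurable structure of \cite{GrKoeKus} and the reduction theory of \cite{Dix}, but it is where the argument demands care.
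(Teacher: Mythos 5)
Your overall architecture coincides with the paper's: split off the discrete part, observe that the fibre algebra generated by the slices of $W_{p,x}$ is a finite direct sum of type I factors because $W_{p,x}$ is a finite direct sum of irreducibles, and conclude by exhibiting $\hat M$ as a direct integral of such fibre algebras. The fibrewise analysis is fine, and so is the treatment of the discrete part, where Proposition \ref{PropDiscContCoreps} does give genuine disjointness from the continuous part, so that the corresponding projection is central and every subprojection majorizes a minimal one.

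The gap is exactly the step you flag, and the justification you sketch for it does not work. To invoke reduction theory you need $\hat M'$ to consist of decomposable operators for your fibring, equivalently that the diagonal algebra is contained in the centre of $\hat M$. Mutual disjointness of the summands of \eqref{EqnWDecomposition} over $q^{\mathbb Z}\times[-1,1]$ is not established anywhere, and the paper's proof indicates it fails for that fine fibring: the commutant $\hat M'=\hat J\hat M\hat J$ is controlled through the modular conjugation $\hat J$, which satisfies $\hat J\Omega^-\hat J=-\Omega^-$ on $\mathcal K^-$ and therefore interchanges the $x$ and $-x$ spectral subspaces of the Casimir operator; only the symmetric spectral projections $\chi_{B\cup(-B)}(\Omega)$ are fixed. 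For this reason the paper must coarsen the decomposition to fibres $\mathcal L_x=\bigoplus_p\bigl(\mathcal L_{p,x}\oplus\mathcal L_{p,-x}\bigr)$ over $x\in[0,1]$ before the diagonal algebra $\mathcal Z$ satisfies $\mathcal Z\subseteq\hat M'\subseteq\mathcal Z'$. Even then, proving the inclusion $\hat M'\subseteq\mathcal Z'$ is the bulk of the argument: one writes $\hat M'$ as generated by the operators $\hat J Q(p_1,p_2,n)\hat J$, quotes \cite{GrKoeKus} for the decomposability of the $Q(p_1,p_2,n)$, and then proves $\hat J P_B\hat J=P_B$ via the identities $\hat J\Omega^{\pm}\hat J=\pm\Omega^{\pm}$, which are themselves obtained from a maximality argument for self-adjoint extensions of $\Omega_0$ affiliated with $\hat M$. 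None of this is supplied by general measurability or reduction theory, so as it stands your proposal omits the actual content of the proof.
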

\begin{proof} We start with some preliminary remarks. The
projections in $\hat{M}'$ correspond to the invariant subspaces of
$W$ and the minimal projections in $\hat{M}'$ correspond to the
irreducible subspaces of $W$. The partial isometries in $\hat{M}'$
correspond to intertwiners of closed subcorepresentations of $W$.

Let $P \in \hat{M}'$ be the projection on $\bigoplus_{p \in
q^\mathbb{Z}} \int_{[-1,1]}^\oplus \mathcal{L}_{p,x}$. There are
no intertwiners between closed subcorepresentations of
$\bigoplus_{p \in q^\mathbb{Z}} \int_{[-1,1]}^\oplus W_{p,x} dx $
and the direct sum $\bigoplus_{p \in q^\mathbb{Z}} \int_{x \in
\sigma_d(\Omega)}^\oplus W_{p,x} $, see Proposition
\ref{PropDiscContCoreps}. Therefore, $P$ commutes with every
partial isometry in $\hat{M}'$ so that $P$ is central. We have
$\hat{M}' = P\hat{M}'P \oplus  (1-P)\hat{M}'(1-P)$. The von
Neumann algebra $(1-P)\hat{M}'(1-P)$ is of type I since the direct
sum decomposition $\bigoplus_{p \in q^\mathbb{Z}} \int_{x \in
\sigma_d(\Omega)}^\oplus W_{p,x}$ together with the preliminary
remarks yield that every projection majorizes a minimal
projection.

Now we prove that  $P\hat{M}'P$ is a type I von Neumann algebra.
Define the Hilbert spaces
\[
\mathcal{L}_x  = \left( \bigoplus_{p \in q^{\mathbb{Z}}}
\mathcal{L}_{p,x} \right) \oplus \left( \bigoplus_{p \in
q^{\mathbb{Z}}} \mathcal{L}_{p,-x}\right), \quad x \in
(0,1);\qquad \mathcal{L}_0  = \oplus_{p \in q^{\mathbb{Z}}}
\mathcal{L}_{p,0}.
\]
\noindent Then,
\begin{equation}\label{EqnDirectIntDec}
P\mathcal{K}  = \int^\oplus_{[0,1]} \mathcal{L}_x dx,
\end{equation}
\noindent and we let $\mathcal{Z}$ denote the diagonizable
operators with respect to this direct integral decomposition.

We claim that $\mathcal{Z} \subseteq \hat{M}' \subseteq
\mathcal{Z}'$. For the former inclusion, note that the
stepfunctions in $\mathcal{Z}$ are linear combinations of
projections onto invariant subspaces for $\hat{M}$. By the
preliminary remarks we find $\mathcal{Z} \subseteq \hat{M}'$. To
prove that $\hat{M}' \subseteq \mathcal{Z}'$, note that by
\cite[Corollary 4.11]{GrKoeKus}, $\hat{M}'$ is the
{$\sigma$-strong-$\ast$} closure of the linear span of elements
$\hat{J} Q(p_1, p_2, n) \hat{J}$, $p_1, p_2 \in q^\mathbb{Z}$, $n
\in \mathbb{Z}$. The operators $Q(p_1, p_2, n)$ are decomposable
with respect to the direct integral decomposition
(\ref{EqnDirectIntDec}) as was proved in \cite{GrKoeKus}; combine
\cite[Proposition 10.5]{GrKoeKus} together with the direct
integral decomposition \cite[Theorem 5.7]{GrKoeKus} and the
definition of $Q(p_1, p_2, n)$ \cite[Equation (20)]{GrKoeKus}. We
prove that $\hat{J}$ is a decomposable operator with respect to
(\ref{EqnDirectIntDec}). It suffices to show that $\hat{J}
\subseteq \mathcal{Z}'$ \cite[Theorem~II.2.1]{Dix}.

Let $B \subseteq [0,1]$ be a Borel set and let $P_B \in
\mathcal{Z}$ be the operator $P_B = \int^\oplus_{[0,1]} \chi_B(x)
1_{\mathcal{L}_x} dx$, where $\chi_B$ is the indicator function on
$B$. $P_B$ is a projection and we have
\begin{equation}\label{EqnProjectionPB}
\begin{split}
& \chi_{B\cup -B }(\Omega) \mathcal{K}  = \chi_{B\cup -B }(\Omega) \bigoplus_{p,m,\epsilon, \eta} \mathcal{K}(p,m,\epsilon, \eta) =\\
 &  \bigoplus_p \left( \bigoplus_{m,\epsilon\eta = 1} \int^\oplus_{ x \in B \cup -B} \mathbb{C} dx \oplus  \bigoplus_{m,\epsilon\eta = -1} \int^\oplus_{-x \in B \cup -B} \mathbb{C} dx  \right) =\\
&   \bigoplus_p  \bigoplus_{m,\epsilon,\eta} \int^\oplus_{ x \in B \cup -B} \mathbb{C} dx
 =  \bigoplus_p  \int^\oplus_{ x \in B \cup -B} \mathcal{L}_{p,x} dx
 =  \int^\oplus_{ x \in B} \mathcal{L}_{x} dx
 = P_B \mathcal{K},
\end{split}
\end{equation}
\noindent where the second equation uses \cite[Theorem 1.10]{Lan}
and the fact that there is a direct integral decomposition
$\mathcal{K}(p,m,\epsilon, \eta) = \int^\oplus_{\sigma(\Omega)}
\mathbb{C} dx$ such that $\chi_B(\Omega) \mathcal{K}(p,m,\epsilon,
\eta) = \int^\oplus_{\varepsilon \eta x \in B} \mathbb{C} dx$, see
\cite[Theorem 8.13]{GrKoeKus}. Other equations are a matter of
changing the order and combining direct integrals.


 Note that $\Omega$ leaves the spaces $\mathcal{K}^+$ and $\mathcal{K}^-$ invariant. Let $P^+$ and $P^-$ be the projections onto respectively
$\mathcal{K}^+$ and $\mathcal{K}^-$. Write, again using the notation of \cite{GrKoeKus}
$$\Omega = \left(
\begin{array}{ll}
\Omega^+ & 0  \\
0 & \Omega^-
\end{array}
\right), \qquad \Omega_0 = \left(
\begin{array}{ll}
\Omega^+_0 & 0  \\
0 & \Omega^-_0
\end{array}
\right),
$$
\noindent where $\Omega^\pm = \Omega P^\pm$ and $\Omega_0^\pm =
\Omega_0 P^\pm$. Note that $\Omega^\pm$ is a self-adjoint
extension of $\Omega_0^\pm$. By \cite[Equation (11)]{GrKoeKus} we
see that $\hat{J}$ leaves the spaces $\mathcal{K}^+$ and
$\mathcal{K}^-$ invariant. We claim that
\begin{equation}\label{EqnOperatorOmega}
  \hat{J}\vert_{\mathcal{K}^+} \Omega^+ \hat{J}\vert_{\mathcal{K}^+} = \Omega^+, \qquad   \hat{J}\vert_{\mathcal{K}^-} \Omega^- \hat{J}\vert_{\mathcal{K}^-} = -\Omega^-.
 \end{equation}
\noindent By \cite[Equations (11) and (19)]{GrKoeKus} we find that
$\hat{J} \Omega_0 \hat{J} f_{m,p,t} = \sgn(pt) \Omega_0
f_{m,p,t}$, so that $\hat{J} \Omega_0^+ \hat{J} = \Omega_0^+$ and
$\hat{J} \Omega_0^- \hat{J} = -\Omega_0^-$. Hence $\hat{J}
\Omega^+ \hat{J} \supseteq \Omega_0^+$, and $\hat{J} \Omega^-
\hat{J} \supseteq -\Omega_0^-$. Let $x \in \hat{M}'$, and write:
\[
\begin{split}
\hat{J} x \hat{J} = y^+ \oplus y^-, &\:\:\qquad y^+ = \left(
\begin{array}{ll}
y^{+}_1 & 0  \\
0 & y^{+}_2
\end{array}
\right) \in M_+, \:\: y^- = \left(
\begin{array}{ll}
0 & y^{-}_2  \\
y^{-}_1 & 0
\end{array}
\right) \in M_-,
\end{split}
\]
\noindent where the decomposition is as in \cite[Proposition
4.8]{GrKoeKus}. By that same proposition, we find that $y^-_1
\Omega^+ \subseteq - \Omega^- y^-_1$, $y^-_2 \Omega^- \subseteq -
\Omega^+ y^-_2$, $y^+_1 \Omega^+ \subseteq \Omega^+ y^+_1$ and
$y^+_2 \Omega^- \subseteq \Omega^- y^+_2$. This implies the
inclusion in the following computation:
\[
\begin{split}
& x \hat{J}\left(
\begin{array}{ll}
\Omega^+ & 0  \\
0 & -\Omega^-
\end{array}
\right)\hat{J}   = \hat{J} \hat{J} x \hat{J} \left(
\begin{array}{ll}
\Omega^+ & 0  \\
0 & -\Omega^-
\end{array}
\right) \hat{J} = \\
& \hat{J} \left(
\begin{array}{ll}
y^{+}_1 & 0  \\
0 & y^{+}_2
\end{array}
\right)\left(
\begin{array}{ll}
\Omega^+ & 0  \\
0 & -\Omega^-
\end{array}
\right) \hat{J} \oplus \hat{J} \left(
\begin{array}{ll}
0 & y^{-}_2  \\
y^{-}_1 & 0
\end{array}
\right)\left(
\begin{array}{ll}
\Omega^+ & 0  \\
0 & -\Omega^-
\end{array}
\right) \hat{J} \subseteq \\
&  \hat{J} \left(
\begin{array}{ll}
\Omega^+ & 0  \\
0 & -\Omega^-
\end{array}
\right) (y^+ \oplus y^-) \hat{J}
 = \hat{J} \left(
\begin{array}{ll}
\Omega^+ & 0  \\
0 & -\Omega^-
\end{array}
\right) \hat{J} x.
\end{split}
\]
\noindent So $\hat{J} \Omega^+ \hat{J} \oplus - \hat{J} \Omega^-
\hat{J}$ is a self-adjoint operator affiliated to $\hat{M}$
extending $\Omega_0$. So \cite[Theorem 4.6]{GrKoeKus} implies that
$\left(\hat{J}\vert_{\mathcal{K}^+} \Omega^+ \hat{J}\vert_{\mathcal{K}^+} \oplus - \hat{J}\vert_{\mathcal{K}^-} \Omega^-
\hat{J}\vert_{\mathcal{K}^-} \right) = \Omega$, which results in
(\ref{EqnOperatorOmega}).

\noindent To prove that $\hat{J} \subseteq \mathcal{Z}'$, it
suffices to prove that for all Borel sets $B \subseteq [0,1]$,
$\hat{J} P_B \hat{J} = P_B$. Indeed we have
\[
\begin{split}
 \hat{J} P_B \hat{J} = \hat{J} \chi_{B\cup -B}(\Omega) \hat{J} &=  \hat{J}\vert_{\mathcal{K}^+} \chi_{B \cup -B}(\Omega^+) \hat{J}\vert_{\mathcal{K}^+} \oplus \hat{J}\vert_{\mathcal{K}^-} \chi_{B\cup -B}(\Omega^-) \hat{J}\vert_{\mathcal{K}^-} \\&= \chi_{B\cup-B}(\Omega^+) \oplus \chi_{B\cup -B}(\Omega^-) = \chi_{B\cup -B}(\Omega) =
 P_B.
 \end{split}
\]
\noindent The first and last equality are due to
(\ref{EqnProjectionPB}); the third equality is due to
(\ref{EqnOperatorOmega}). In all, we have proved that $\mathcal{Z}
\subseteq \hat{M} \subseteq \mathcal{Z}'$.

 Let $W_x
= \left( \bigoplus_{p \in q^\mathbb{Z}} W_{p,x}\right) \oplus
\left( \bigoplus_{p \in q^\mathbb{Z}} W_{p,-x}\right)$ for $x \in
(0,1]$ and $W_0 = \bigoplus_{p \in q^\mathbb{Z}} W_{p,0}$. The
operators $Q(p_1,p_2,n)$ form a countable family that generates
$\hat{M}$ \cite[Proposition 4.9]{GrKoeKus}. We apply \cite[Theorem
II.3.2]{Dix} and its subsequent remark, together with
\cite[Theorem II.3.1]{Dix} to conclude that

\begin{equation}\label{EqnMDecomposition}
P\hat{M}P = \int^\oplus_{x \in [0,1]} \hat{M}_x dx,
\end{equation}

\noindent where  $\hat{M}_x$ is generated by $\left\{
(\omega\otimes \iota)(W_x) \mid \omega \in M_\ast  \right\}$
almost everywhere. The projections in $\hat{M}_x'$ correspond to
irreducible subspaces of $W_x$. Since $W_x$ decomposes as a direct
sum of irreducible corepresentations \cite[Proposition 5.4]{GrKoeKus}, every
projection in $\hat{M}_x'$ majorizes a minimal projection. We find
that $\hat{M}_x'$ is type I and by \cite[Theorem 14.1.21]{Kad},
\cite[Corollary V.2.24]{TakI} and (\ref{EqnMDecomposition}) we
conclude that $P\hat{M}P$ is type I . \end{proof}

\begin{Proposition}
 $\hat{M}_c$ is separable.
\end{Proposition}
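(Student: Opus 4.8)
The plan is to exploit that the reduced dual $\hat{M}_c$ is, by definition \cite{Kus}, the norm closure in $B(\hphi)$ of the set $\lambda(M_\ast) = \{(\omega \otimes \iota)(W) \mid \omega \in M_\ast\}$, and to show this set is already norm-separable. Since $W$ is unitary we have $\Vert \lambda(\omega)\Vert \leq \Vert \omega \Vert$, so $\lambda \colon M_\ast \to \hat{M}_c$ is norm-contractive with norm-dense range. The continuous image of a separable metric space is separable, and the norm closure of a separable set is again separable; hence everything reduces to the norm-separability of the Banach space $M_\ast$, which in turn follows from separability of the Hilbert space $\hphi$.

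First I would record that $\hphi$ is separable. In the explicit model of \cite{KoeKus} and \cite{GrKoeKus} the GNS-space $\hphi = \hpsi$ carries the orthonormal basis $\{ f_{m,p,t} \}$ whose index set is countable (with $m$ ranging over $\mathbb{Z}$, $p$ over the countable set $q^{\mathbb{Z}}$, and $t$ over a countable set). Consequently $\hphi$ admits a countable orthonormal basis and is separable.

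Next I would deduce that $M_\ast$ is norm-separable. Since $M \subseteq B(\hphi)$ with $\hphi$ separable, the trace class $B_1(\hphi)$ is norm-separable, and $M_\ast$ is isometrically a quotient of $B_1(\hphi)$ by the annihilator $M_\perp = \{ T \in B_1(\hphi) \mid \Tr(Tx) = 0 \text{ for all } x \in M \}$; a quotient of a separable Banach space is separable. Concretely, fixing a countable dense set $\mathcal{D} \subseteq \hphi$, the $(\mathbb{Q}+i\mathbb{Q})$-linear span of the vector functionals $\{ \omega_{\xi,\eta}|_M \mid \xi, \eta \in \mathcal{D} \}$ is a countable norm-dense subset of $M_\ast$: finite sums of vector functionals are norm-dense in $M_\ast$, and the estimate $\Vert \omega_{\xi,\eta} - \omega_{\xi',\eta'} \Vert \leq \Vert \xi - \xi'\Vert\,\Vert \eta\Vert + \Vert \xi'\Vert\,\Vert \eta - \eta'\Vert$ lets one approximate the vectors from $\mathcal{D}$.

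Combining these observations, $\lambda(M_\ast)$ is separable, and therefore so is its norm closure $\hat{M}_c$. I do not expect a genuine obstacle in this argument: the substantial hypotheses of the Plancherel theorem for this example (the type I property of $\hat{M}$ and the direct-integral decomposition established above) are the hard part, whereas separability of $\hat{M}_c$ is the soft remaining condition and rests only on the separability of the concrete Hilbert space $\hphi$. The sole point to state with care is the identification of $\hat{M}_c$ with the norm closure of $\lambda(M_\ast)$, which is precisely the defining property of the reduced dual in the sense of \cite{Kus}.
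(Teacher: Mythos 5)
Your argument is correct and follows essentially the same route as the paper: both reduce the claim to norm-separability of $M_\ast$ via the contractive map $\lambda$ with dense range, and both derive that separability from the countable orthonormal basis $\{f_{m,p,t}\}$ of $\hphi$ (the paper by taking the $\mathbb{Q}$-linear span of the vector functionals $\omega_{f_{m_0,p_0,t_0},f_{m_1,p_1,t_1}}$, you by the equivalent trace-class quotient argument). No gaps.
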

\begin{proof} Note that if $\omega_n \in M_\ast$ is sequence
that converges in norm to $\omega \in M_\ast$, then $\Vert
\lambda(\omega_n) - \lambda(\omega) \Vert \leq \Vert \omega_n -
\omega \Vert$ so that $\lambda(\omega_n)$ converges in norm to
$\lambda(\omega)$. Since the norm on $\hat{M}_c$ is the operator
norm on the GNS-space and $\hat{M}_c$ is the C*-algebra obtained
as the closure of $\{ \lambda(\omega) \mid \omega \in M_\ast \}$.
It suffices to check that $M_\ast$ is separable. The
$\mathbb{Q}$-linear span of $\{ \omega_{f_{m_0,p_0,t_0},
f_{m_1,p_1,t_1}} \mid m_i \in \mathbb{Z}, p_i, t_i \in I_q, i
=0,1\}$ is weakly dense, hence norm dense in $M_\ast$. \end{proof}


\end{document}